  \let\noindent\empty 
\numberwithin{equation}{section}
\theoremstyle{plain}
\newtheorem{lemma}[equation]{Lemma} 
\newtheorem*{lemma*}{Lemma}
\newtheorem{proposition}[equation]{Proposition}
\newtheorem*{proposition*}{Proposition}
\newtheorem{theorem}[equation]{Theorem}
\newtheorem*{theorem*}{Theorem}
\newtheorem{corollary}[equation]{Corollary}
\newtheorem*{corollary*}{Corollary}
\newtheorem*{conjecture*}{Conjecture}
\def\tcc{\textcolor{cyan}}
\definecolor{purp}{RGB}{148,30,238}
\definecolor{pinky}{RGB}{255, 174, 252}
\definecolor{purp1}{RGB}{120,30,238}
\definecolor{pinky1}{RGB}{243, 162, 252}
\definecolor{pinky2}{RGB}{252, 165, 249}
\definecolor{pinky3}{RGB}{247, 174, 255}
\definecolor{purp2}{RGB}{189, 144, 249}
\definecolor{purp3}{RGB}{203, 172, 243}
\theoremstyle{remark}
\newtheorem{claim}[equation]{Claim}
\newtheorem{remark}[equation]{Remark}
\theoremstyle{definition}
\newtheorem{definition}[equation]{Definition}
\newenvironment{enumalph}
{\begin{enumerate}}
{\end{enumerate}}
\DeclareMathOperator{\coeff}{coeff}
\DeclareMathOperator{\lead}{lead}
\DeclareMathOperator{\term}{term}
\title{A Pair of Jones Polynomial Positivity Obstructions}
\author{Lizzie Buchanan}
\address{Department of Mathematics, Dartmouth College, Hanover, NH 03755}
\email{elizabeth.m.buchanan.gr@dartmouth.edu}
\urladdr{\url{http://math.dartmouth.edu/~ebuchanan/}}
\begin{document}

\maketitle

\begin{abstract}
We provide a new bound on the maximum degree of the Jones polynomial of a positive link with second Jones coefficient equal to $\pm 1$ or $\pm 2$. This builds upon the result of our previous work, in which we found such a bound for positive fibered links. We also show that each of these three bounds obstruct positivity for infinitely many almost-positive diagrams, allowing us to classify infinitely many knots as almost-positive.

\end{abstract}

\section{Introduction}

A positive knot diagram hints at its own existence by forcing certain properties on its knot polynomials and other invariants. For example, the Conway polynomial of a positive knot is positive (\cite{Cromwell}) and the signature of a positive knot is negative (\cite{przytycki2009positive}). So, any of these properties can be used as a positivity obstruction: if a given knot cannot satisfy these conditions, the knot cannot be positive. 

Most of our positivity obstructions fail, however, to distinguish almost-positive knots from positive knots, because many results about positive knots have been shown to be true for almost-positive knots as well. For example, the Conway polynomial of an almost-positive knot is positive (\cite{Cromwell}) and the signature of an almost-positive knot is negative (\cite{przytycki2009almostpositive}). It is also now known that both positive and almost-positive knots are strongly quasipositive (\cite{Feller_2022}). We refer the reader to \cite{Feller_2022} and \cite{StI} for more information on the many overlapping properties of positive and almost-positive knots.

In \cite{Buchanan_2022}, we found a condition on the Jones polynomial of a fibered positive knot that can, in some cases, show that a given knot is almost-positive rather than positive. 

\begin{theorem} \cite{Buchanan_2022}\label{main result for coeff 0}
    The Jones polynomial of a fibered positive $n$-component link $L$ satisfies
    $$\max \deg V_L \leq 4 \min \deg V_L + \frac{n-1}{2}.$$
\end{theorem}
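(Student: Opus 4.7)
The plan is to combine standard Kauffman bracket degree estimates for positive diagrams with a structural argument exploiting the Hopf-plumbing decomposition of a positive fiber surface.

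First I would fix a reduced positive diagram $D$ of $L$ with $c$ crossings, and let $s_A$ and $s_B$ denote the number of circles in the all-$A$ and all-$B$ Kauffman states. Since a reduced positive diagram is $A$-adequate and has writhe $c$, the Kauffman bracket state sum yields
\begin{equation*}
\min \deg V_L = \frac{c - s_A + 1}{2}, \qquad \max \deg V_L \leq c + \frac{s_B - 1}{2}.
\end{equation*}
The all-$A$ state resolution is precisely the Seifert surface $\Sigma$ from $D$, and because positive diagrams realize the Seifert genus, $\chi(\Sigma) = s_A - c = 2 - 2g - n$. Substituting, $\min \deg V_L = g + (n-1)/2$, which identifies the right-hand side of the claimed inequality. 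After substituting this into the target inequality and using $c = s_A + 2g + n - 2$, the theorem reduces to the purely combinatorial claim
\begin{equation*}
2 s_A + s_B \;\leq\; 4g + 3n. \qquad (*)
\end{equation*}

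The fibered hypothesis enters through the fact that $\Sigma$ is now a fiber surface, and for a positive fibered link $\Sigma$ is obtained from a disk by a sequence of positive Hopf plumbings (Giroux--Goodman together with Rudolph's characterization of quasipositive surfaces). I plan to induct on the number of plumbings. The base case is the unknot, where $s_A = s_B = n = 1$ and $g = 0$, so $(*)$ reads $3 \leq 3$. For the inductive step, a positive Hopf plumbing corresponds diagrammatically to attaching a band along a properly embedded arc $\alpha \subset \Sigma$, adding two crossings and exactly one Seifert circle, while the changes in $g$, $n$, and $s_B$ are determined by the combinatorics of $\alpha$. The goal is to check that the quantity $2 s_A + s_B - 4g - 3n$ is non-increasing under every such move.

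The main obstacle will be controlling the change in $s_B$, because the all-$B$ state of the plumbed diagram is not locally determined by the all-$B$ state of the smaller one: the attaching arc can split or merge existing $B$-circles in ways that depend on the global combinatorics of the diagram. I anticipate handling this by a short case analysis according to whether the endpoints of $\alpha$ lie in the same or distinct $B$-circles, verifying $(*)$ is preserved in each case. If the plumbing-level bookkeeping becomes unwieldy, a viable alternative is a direct argument on the all-$B$ state graph: an excess $B$-circle beyond what $(*)$ allows would produce an independent cycle in $H_1(\Sigma)$ beyond the $2g + (n-1)$ cycles permitted by $\Sigma$ being a genus-$g$ surface with $n$ boundary components, contradicting fiberedness.
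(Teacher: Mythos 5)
Your reduction is sound: the two Kauffman estimates plus $\chi(\Sigma)=s_A-c$ correctly turn the theorem into the inequality $2s_A+s_B\leq 4g+3n$, and after substituting $2g+n-1=c-s_A+1$ this is exactly the bound $s_B\leq n+2\bigl(c-2s_A+2\bigr)$ that the paper's machinery delivers (via $B_D\leq n(D)+2m$ for Burdened type $0$ diagrams together with $m=4\min\deg V_L-c(D)$). The gap is in how you propose to prove $(*)$. The paper's route (from \cite{Buchanan_2022}) is diagrammatic: fiberedness forces the second Jones coefficient to vanish, hence the reduced $A$-state graph is a tree, and $B_D=n$ for Balanced type $0$ diagrams is an induction on that tree. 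Your route inducts on a Hopf-plumbing decomposition of the abstract fiber surface $\Sigma$, but Hopf plumbing is an operation on the surface, not on the chosen positive diagram $D$. A plumbing arc $\alpha\subset\Sigma$ can run through many Seifert disks and bands, and there is no argument that the plumbed surface is again the Seifert surface of a positive diagram obtained from $D$ by adding two crossings and one Seifert circle; that local picture only covers arcs contained in a single disk region. Moreover the intermediate links in such a decomposition need not be positive, so the identities $\min\deg V=(c-s_A+1)/2$ and $\chi(\Sigma)=s_A-c$ that your reduction relies on are unavailable at intermediate stages, and the quantity $2s_A+s_B-4g-3n$ is not even well defined along the induction without a preferred positive diagram at each step.

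The fallback argument does not repair this: $B$-circles are features of the all-$B$ Kauffman state and do not define classes in $H_1(\Sigma)$ (the surface is assembled from the $A$-state circles), and $\operatorname{rank}H_1(\Sigma)=2g+n-1$ holds for every genus-$g$ surface with $n$ boundary components whether or not the link fibers, so no contradiction with fiberedness can be extracted from an ``excess'' $B$-circle. Finally, the input ``the fiber surface of a fibered positive link is an iterated plumbing of positive Hopf bands'' is itself a heavy and delicate fact (Giroux--Goodman plus Hedden's tightness criterion, with additional care for links; it is not a consequence of Rudolph's quasipositivity results alone), whereas the paper only needs the elementary observation of Stoimenow and of Futer--Kalfagianni--Purcell that the second Jones coefficient vanishes exactly when the reduced Seifert graph is a tree. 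If you want to salvage a plumbing-flavored argument, induct on the tree structure of the reduced $A$-state graph itself, where each edge is a visible Murasugi sum with a $(2,2p)$-torus-link fiber along a disk and the effect on the $B$-state is genuinely local; that is essentially the paper's clasp-move argument in disguise.
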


Among positive links, a fibered positive link is distinguished by having $0$ as the second coefficient of its Jones polynomial (\cite{Stoimenow}, \cite{futer2012guts}). In this paper, we focus on two more classes of positive links: those characterized by having $\pm 1$ or $\pm 2$ as the second coefficient of their Jones polynomial. We prove the following theorem. 

\begin{theorem*}$\mathbf{\ref{main result coeff 1 and 2}}$
    Let $L$ be a positive link with $n$ link components, Jones polynomial $V_L$, and Conway polynomial $\nabla_L$. 
    \begin{enumalph}
       \item  If the second coefficient of $V_L$ is $\pm 1$, then 
    $$\max \deg V_L \leq 4 \min \deg V_L + \frac{n-1}{2} + 2\lead \coeff \nabla_L -2. $$
        \item If the second coefficient of $V_L$ is $\pm 2$, then 
    $$\max \deg V_L \leq 4 \min \deg V_L + \frac{n-1}{2} + \lead \coeff \nabla_L.$$ 
     \end{enumalph}
\end{theorem*}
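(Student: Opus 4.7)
The plan is to adapt the strategy used for the fibered case (Theorem \ref{main result for coeff 0}). Starting from a reduced positive diagram $D$ of $L$, I would analyze the Kauffman bracket state sum: the extremal coefficients of $V_L$ are governed by the all-$A$ and all-$B$ states, so bounding $\max \deg V_L - 4 \min \deg V_L$ reduces to a combinatorial study of the Seifert graph $G(D)$ and its block decomposition. In the fibered case, second Jones coefficient $0$ forced each 2-edge-connected block of $G(D)$ to have a very restricted shape. When the second coefficient is $\pm 1$ or $\pm 2$, I expect only a slightly relaxed constraint: essentially, a bounded number of \emph{exceptional} blocks are now permitted, and the bound must be enlarged to account for them.

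The first step is to make this structural dichotomy precise. I would express the second Jones coefficient as a signed count over the ``short'' sub-extremal states in the Kauffman bracket, and recast that count as an alternating sum indexed by exceptional blocks of $G(D)$. Forcing that sum to be $\pm 1$ or $\pm 2$ should pin down both the number and the shape of the permissible exceptional blocks, in particular distinguishing the $\pm 1$ and $\pm 2$ cases from one another.

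The second step is to tie the exceptional-block count to $\lead \coeff \nabla_L$. For positive links, $\nabla_L$ has a clean description in terms of a Seifert matrix built directly from $G(D)$, and the leading coefficient ought to be block-multiplicative with each exceptional block contributing a specific amount. Each such block adds a predictable increment to $\max \deg V_L$ beyond the fibered bound, and the trade-off should be calibrated so that the corrections $2\lead \coeff \nabla_L - 2$ and $\lead \coeff \nabla_L$ drop out naturally. The ``$-2$'' in part (a) most likely reflects a base contribution to $\nabla_L$ that is always present and therefore should not be counted as adding slack to the bound.

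The main obstacle is the structural enumeration in the first step. Unlike the fibered case, where one condition suffices, several distinct diagrammatic patterns can realize second Jones coefficient $\pm 1$ or $\pm 2$, and ruling out configurations that would exceed the stated bound requires careful sign-tracking inside the state sum. This combinatorial bookkeeping, together with verifying that the two cases $\pm 1$ and $\pm 2$ really do lead to the asymmetric corrections above, is where most of the work will lie.
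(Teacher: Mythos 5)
Your high-level skeleton does match the paper's: by Stoimenow's result (Theorem \ref{second Jones coeff counts holes}) the second Jones coefficient is, up to sign, the first Betti number of the reduced Seifert ($=$ reduced $A$-state) graph, so the $\pm 1$ and $\pm 2$ hypotheses force one or two independent cycles; the leading Conway coefficient is then shown to depend only on that cycle structure; and the two are combined against the $B$-state bound. But as written the proposal has two genuine gaps. First, you never address the quantity that actually controls $\max\deg V_L$, namely the number of circles $B_D$ in the all-$B$ state: the Kauffman bound is $\max\deg V_L \leq c(D) + \frac{B_D-1}{2}$, and the entire technical core of the paper is proving that $B_D = n$ for ``Balanced'' diagrams (Theorems \ref{Bal type 1 B=n}, \ref{Bal type 2 B=n}; via an induction on ``clasp moves'' that shrink the cycles while preserving both $B_D$ and $n$), that $B_D = n$ or $n\pm 2$ in the Oddly Balanced case, and hence $B_D \leq n + 2m$ for a general Burdened diagram with burdening number $m$. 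Saying each exceptional block ``adds a predictable increment to $\max\deg V_L$'' has no content until you can convert $B_D$ into a diagram-independent quantity; nothing in your plan does this.

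Second, the claim that $\lead\coeff\nabla_L$ is block-multiplicative with each exceptional block contributing a fixed amount fails in exactly the case that makes part (b) delicate: when the second coefficient is $\pm 2$, the two independent cycles may share $x\geq 1$ edges of the reduced Seifert graph, so they sit inside a single $2$-connected block, and the leading coefficient is $\frac{k_1k_2-x^2}{4}$ (or $\frac{k_1k_2-x^2+1}{4}$ when the number of cycle edges is odd), computed from pretzel links $P(-p,-q,-r)$ rather than from a product over blocks (Theorems \ref{lead coeff for Bur type 2}, \ref{lead coeff for type 2 Oddly Bur}). Relatedly, part (b) is not an exact calibration where corrections ``drop out naturally'': the increment to the $B$-state bound is $k_1+k_2-x-4$ (plus $1$ in the odd case), and one must separately verify the inequality $k_1+k_2-x-4 \leq \frac{k_1k_2-x^2}{4}$ (Lemma \ref{result for type 2}). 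Your plan also misses the parity dichotomy (Balanced versus Oddly Balanced type $2$), which changes both the $B$-circle count and the Conway formula. These are not bookkeeping details; without them the stated bounds cannot be reached.
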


In Section \ref{Application section}, we present two examples of infinite families of knots, and use the two parts of this theorem to prove that these knots cannot be positive, and are instead almost-positive. We also present a related infinite family of knots whose non-positivity can be shown using the result of Theorem \ref{main result for coeff 0}. This demonstrates that each of our theorems obstruct positivity for infinitely many knots. 

\section{Background}

In everything that follows, we always assume we are working with non-split links. We will use the following notation. 
\begin{itemize}
    \item $c(D)$ is the number of crossings in diagram $D$
    \item $n(D)$ is the number of link components
    \item $s(D)$ is the number of Seifert circles
    \item $A_D$ is the number of $A$-circles (sometimes we may just use $A$ if there is only one diagram in question)
    \item $B_D$ (or just $B$) is the number of $B$-circles
    \item $\nabla$ is the Conway polynomial, and $\nabla_D$ (or $\nabla(D)$, $\nabla_L$, or $\nabla(L)$) is the Conway polynomial of link $L$ represented by $D$.
    \item $V$ is the Jones polynomial, and $V_D$ (or $V(D)$, $V_L$, or $V(L)$) is the Jones polynomial of the link $L$ represented by diagram $D$
\end{itemize}

\begin{definition}
    The \textbf{\textit{second coefficient of the Jones polynomial}} is the coefficient of the term with exponent one higher than the minimal degree. For example, the Jones polynomial of the trefoil is $V(3_1) = t + t^3 - t^4$, so we have $\min \deg V = 1$, $\max \deg V = 4$, and the \textbf{\textit{second Jones coefficient}} is equal to $0$. 
\end{definition}

\begin{definition}
A (oriented) link diagram is called \textbf{positive} if every crossing in that diagram is positive. (See Figure \ref{positive and negative crossing}.) A link is a \textbf{positive link} if it has a positive diagram.  
\end{definition}

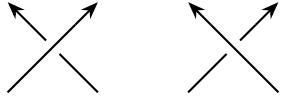
\begin{figure}[h]
\centering
    \begin{tikzpicture}[knot gap=9, scale=0.6]
        
        \begin{scope}[xshift=5cm]
        \draw[thick, -{Stealth}] (0.5,-0.5) -- (-1,1);
        \draw[thick, -{Stealth}] (-1,-1) -- (1,1);
        \draw[thick, knot] (1,-1) -- (-0.5,0.5);
        \draw[thick, knot] (-1,-1) -- (0.5,0.5);
        \end{scope}
        
        \begin{scope}[xshift=9cm]
        \draw[thick, -{Stealth}] (0.5,-0.5) -- (-1,1);
        \draw[thick, -{Stealth}] (-1,-1) -- (1,1);
        \draw[thick, knot] (-1,-1) -- (0.5,0.5);
        \draw[thick, knot] (1,-1) -- (-0.5,0.5);
        \end{scope}
        
    \end{tikzpicture} 
    \caption{ A positive crossing (left) and a negative crossing (right)} \label{positive and negative crossing}
\end{figure}

\begin{definition}
    At any crossing in any link diagram $D$, we can perform an \textbf{$A$-smoothing} or a \textbf{$B$-smoothing} (see Figure \ref{A and B smoothing}). Performing $A$-smoothings on every crossing in the diagram results in an arrangement of circles known as the \textbf{$A$-state}. The set of circles in the $A$-state are called the \textbf{$A$-circles}. 
We then can consider the associated \textbf{$A$-state graph} of the diagram $D$: Every $A$-circle in $D$ corresponds to a vertex in the $A$-state graph, and every crossing in $D$ corresponds to an edge in the graph (see Figure \ref{reduced A-state graph example}). The \textbf{reduced $A$-state graph} is the $A$-state graph with duplicate edges removed. 
\end{definition}

\begin{figure}[h]
    \centering
    \begin{tikzpicture}[every path/.style={thick}, every
node/.style={transform shape, knot crossing, inner sep=2.75pt}, scale=0.6]
    \node (tl) at (-1, 1) {};
    \node (tr) at (1, 1) {};
    \node (bl) at (-1, -1) {};
    \node (br) at (1, -1) {};
    \node (c) at (0,0) {};

    \draw (bl) -- (tr);
    \draw (br) -- (c);
    \draw (c) -- (tl);
    
    \begin{scope}[xshift=4cm]
    \node (tl) at (-1, 1) {};
    \node (tr) at (1, 1) {};
    \node (bl) at (-1, -1) {};
    \node (br) at (1, -1) {};

    \draw (bl) .. controls (bl.8 north east) and (tl.8 south east) .. (tl);
    \draw (br) .. controls (br.8 north west) and (tr.8 south west) .. (tr);
    \end{scope}

    \begin{scope}[xshift=8cm]
    \node (tl) at (-1, 1) {};
    \node (tr) at (1, 1) {};
    \node (bl) at (-1, -1) {};
    \node (br) at (1, -1) {};

    \draw (bl) .. controls (bl.8 north east) and (br.8 north west) .. (br);
    \draw (tl) .. controls (tl.8 south east) and (tr.8 south west) .. (tr);
    \end{scope}

    \end{tikzpicture}
    \caption{ A crossing (left), its $A$-smoothing (middle), and $B$-smoothing (right)}
    \label{A and B smoothing}
\end{figure}
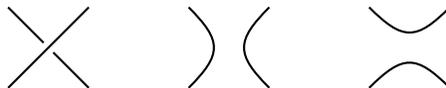


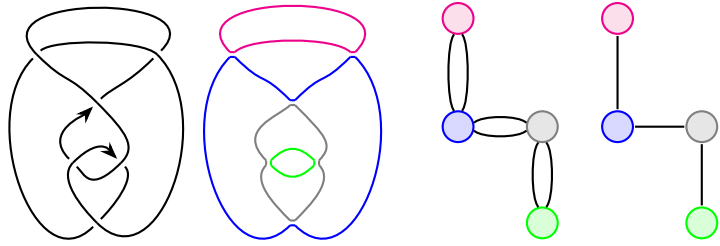
\begin{figure}[h]
    \centering
    \begin{tikzpicture}[every path/.style={thick}, every
node/.style={transform shape, knot crossing, inner sep=2pt}]

\begin{scope}[scale=0.8]
 
    \begin{scope}[xshift=0cm, scale=0.8]
    \node (tlo) at (-6.25,2.25) {};
    \node (tro) at (-3.75,2.25) {};
    \node (so) at (-5,1.25) {};
    \node (mlo) at (-5.5,0) {};
    \node (mro) at (-4.5, 0) {};
    \node (bo) at (-5,-1.25) {};
    \draw (bo.center) .. controls (bo.4 north west) and (mlo.4 south west) ..
    (mlo.center);
    \draw  (bo) .. controls (bo.4 north east) and (mro.4 south east).. (mro);
    \draw  [-{Stealth}] (mlo) .. controls (mlo.8 north west) and (so.3 south west) ..
    (so);
    \draw [-{Stealth}] (mlo.center) .. controls (mlo.8 north east) and (mro.2 north
    west) .. (mro);
    \draw  (mro.center) .. controls (mro.4 north east) and (so.8 south east) ..
    (so.center);
    \draw (mro.center) .. controls (mro.8 south west) and (mlo.3 south east) ..
    (mlo);
    \draw (so) .. controls (so.4 north east) and (tro.8 south west) .. (tro);
    \draw  (so.center) .. controls (so.8 north west) and (tlo.8 south
    east) .. (tlo.center);
    \draw (tlo.center) .. controls (tlo.16 north west) and (tro.16 north
    east) .. (tro);
    \draw (bo.center) .. controls (bo.16 south east) and (tro.16 south east) ..
    (tro.center);
    \draw (bo) .. controls (bo.16 south west) and (tlo.16 south
    west) .. (tlo);
    \draw (tlo) .. controls (tlo.4 north east) and (tro.4 north
    west) .. (tro.center);
    \end{scope}
    
    \begin{scope}[xshift=-0.75cm, scale=0.8]
    \node (tl) at (-1.25,2.25) {};
    \node (tltl) at (-1.3,2.3) {};
    \node (tlbl) at (-1.3,2.2) {};
    \node (tltr) at (-1.2,2.3) {};
    \node (tlbr) at (-1.2,2.2) {};
    \node (tr) at (1.25,2.25) {};
    \node (trtl) at (1.2,2.3) {};
    \node (trbl) at (1.2,2.2) {};
    \node (trtr) at (1.3,2.3) {};
    \node (trbr) at (1.3,2.2) {};
    \node (stl) at (-0.05,1.3) {};
    \node (str) at (0.05,1.3) {};
    \node (sbl) at (-0.05,1.2) {};
    \node (sbr) at (0.05,1.2) {};
    \node (ml) at (-0.5,0) {};
    \node (mltl) at (-0.55,0.05) {};
    \node (mlbl) at (-0.55,-0.05) {};
    \node (mltr) at (-0.45,0.05) {};
    \node (mlbr) at (-0.45,-0.05) {};
    \node (mr) at (0.5, 0) {};
    \node (mrtl) at (0.45, 0.05) {};
    \node (mrbl) at (0.45, -0.05) {};
    \node (mrtr) at (0.55, 0.05) {};
    \node (mrbr) at (0.55, -0.05) {};
    \node (b) at (0,-1.25) {};
    \node (btl) at (-0.05, -1.2) {};
    \node (bbl) at (-0.05, -1.3) {};
    \node (btr) at (0.05, -1.2) {};
    \node (bbr) at (0.05, -1.3) {};
    
    \draw [gray] (btl.center) .. controls (btl.4 north west) and (mlbl.4 south west) ..
    (mlbl.center);
    \draw [gray] (btr.center) .. controls (btr.4 north east) and (mrbr.4 south east)
    .. (mrbr.center);
    \draw [gray] (mltl.center) .. controls (mltl.8 north west) and (sbl.3 south west) ..
    (sbl.center);
    \draw [green] (mltr.center) .. controls (mltr.4 north east) and (mrtl.4 north
    west) .. (mrtl.center);
    \draw [gray] (mrtr.center) .. controls (mrtr.4 north east) and (sbr.8 south east) ..
    (sbr.center);
    \draw [green] (mrbl.center) .. controls (mrbl.4 south west) and (mlbr.4 south east) ..
    (mlbr.center);
    \draw [blue] (str.center) .. controls (str.8 north east) and (trbl.8 south west) .. (trbl.center);
    \draw [blue] (stl.center) .. controls (stl.8 north west) and (tlbr.8 south
    east) .. (tlbr.center);
    \draw [magenta] (tltl.center) .. controls (tltl.16 north west) and (trtr.16 north
    east) .. (trtr.center);
    \draw [blue] (bbr.center) .. controls (bbr.16 south east) and (trbr.16 south east) ..
    (trbr.center);
    \draw [blue] (bbl.center) .. controls (bbl.16 south west) and (tlbl.16 south
    west) .. (tlbl.center);
    \draw [magenta] (tltr.center) .. controls (tltr.4 north east) and (trtl.4 north
    west) .. (trtl.center);
    
    \draw [blue] (stl.center) -- (str.center) {};
    \draw [gray] (sbl.center) -- (sbr.center) {};
    \draw [gray] (mltl.center) -- (mlbl.center) {};
    \draw [green] (mltr.center) -- (mlbr.center) {};
    \draw [green] (mrtl.center) -- (mrbl.center) {};
    \draw [gray] (mrtr.center) -- (mrbr.center) {};
    \draw [magenta] (tltl.center) -- (tltr.center) {};
    \draw [blue] (tlbl.center) -- (tlbr.center) {};
    \draw [magenta] (trtl.center) -- (trtr.center) {};
    \draw [blue] (trbl.center) -- (trbr.center) {};
    \draw [gray] (btl.center) -- (btr.center) {};
    \draw [blue] (bbl.center) -- (bbr.center) {};
    \end{scope}

    
    \begin{scope}[xshift=2cm, scale =0.8]
    \node (magenta) at (0, 2.75) {};
    \node (m) at (0, 3) {};
    \node (blue) at (0, 1) {};
    \node (c) at (0, .75) {};
    \node (cr) at (0.25, 0.75) {};
    \node (orange) at (1.5, 0.75) {};
    \node (o) at (1.75, 0.75) {};
    \node (ou) at (1.75, 0.5) {};
    \node (ol) at (1.5, 0.75) {};
    \node (green) at (1.75, -1) {};
    \node (g) at (1.75, -1.25) {};

     \filldraw [color=magenta, fill=magenta!15] (m) circle[radius=0.32cm];
    \filldraw [color=blue, fill=blue!15] (c) circle[radius=0.32cm];
    \filldraw [color=gray, fill=gray!20] (o) circle[radius=0.32cm];
    \filldraw [color=green, fill=green!15] (g) circle[radius=0.32cm];
    \draw (blue) .. controls (blue.3 north east) and (magenta.3 south east) .. (magenta);
    \draw (blue) .. controls (blue.3 north west) and (magenta.3 south west) .. (magenta);
    \draw (orange) .. controls (orange.3 north west) and (cr.3 north east) .. (cr);
    \draw (orange) .. controls (orange.3 south west) and (cr.3 south east) .. (cr);

    \draw (green) .. controls (green.3 north west) and (ou.3 south west) .. (ou);
    \draw (green) .. controls (green.3 north east) and (ou.3 south east) .. (ou);
    \end{scope}
    
    \begin{scope}[xshift=4.65cm, scale=0.8]
    \node (magenta) at (0, 2.75) {};
    \node (m) at (0, 3) {};
    \node (cyan) at (0, 1) {};
    \node (c) at (0, .75) {};
    \node (cr) at (0.25, 0.75) {};
    \node (orange) at (1.5, 0.75) {};
    \node (o) at (1.75, 0.75) {};
    \node (ou) at (1.75, 0.5) {};
    \node (green) at (1.75, -1) {};
    \node (g) at (1.75, -1.25) {};
    
   \filldraw [color=magenta, fill=magenta!15] (m) circle[radius=0.32cm];
    \filldraw [color=blue, fill=blue!15] (c) circle[radius=0.32cm];
    \filldraw [color=gray, fill=gray!20] (o) circle[radius=0.32cm];
    \filldraw [color=green, fill=green!15] (g) circle[radius=0.32cm];
    \draw (cyan) -- (magenta);
    \draw (orange) -- (cr);
    \draw (green) -- (ou);
    \end{scope}

\end{scope}
\end{tikzpicture} 
\caption{ (Left to right:) A positive link diagram, its $A$-circles, its $A$-state graph, and its reduced $A$-state graph }
\label{reduced A-state graph example}
\end{figure}

\begin{remark}
In a positive diagram, $A$-smoothings are equivalent to smoothing according to Seifert's algorithm. So in a positive diagram, $A_D=s(D)$. 
\end{remark}

A light discussion of the Kauffman state-sum model of the Jones polynomial appears in the Appendix (Section \ref{Appendix}). From this approach to the Jones polynomial, it becomes clear that for any link diagram of any link, the minimum degree of the Jones polynomial is bounded below by the degree of contribution of the $A$-state \cite{Kauffman}. For positive diagrams in particular, the lowest degree term is contributed solely by the $A$-state, and can be neatly expressed in terms of the diagram's crossing number and number of $A$-circles. 

\begin{equation}\label{min degree pos}
    \min \deg V_L = \frac{c(D) - A_D + 1}{2}
\end{equation}

Similarly, for any link diagram of any link, the maximum degree of the Jones polynomial is bounded above by the contribution of the $B$-state. For positive diagrams in particular, this bound can be neatly expressed in terms of the crossing number and number of $B$-circles. 

\begin{equation}\label{standard bound}
    \max \deg V_L \leq c(D) + \frac{B_D -1 }{2}.
\end{equation}

Explanation of both of these facts appears in the Appendix.

In this paper, we focus on two kinds of positive link diagrams and try to improve the bound \ref{standard bound} for these diagrams. Specifically, we focus on Balanced diagrams of type $1$, which represent certain positive links with second Jones coefficient equal to $\pm 1$, and Balanced and Oddly Balanced diagrams of type $2$, which represent certain positive links with second Jones coefficient equal to $\pm 2$. These diagrams are formally defined in Section \ref{Definitions}. In Theorems \ref{Bal type 1 B=n} and \ref{Bal type 2 B=n}, we prove that in a Balanced diagram of type $1$ or $2$, the number of $B$-circles is equal to the number of link components. In Theorem \ref{Oddly Bal B almost equal n}, we prove that in an Oddly Balanced diagram of type $2$, the number of $B$-circles either is equal to the number of link components, or the two quantities differ by $\pm 2$. As such, for Balanced (and Oddly Balanced) diagrams we are able to replace the diagram-dependent quantity $B_D$ in \ref{standard bound} with a diagram-independent quantity. 

Burdened diagrams are the positive diagrams which can be sanded down into Balanced diagrams by smoothing $m$ crossings, for some non-negative integer $m$. This means that, as we saw in the case of fibered positive links, our bound \ref{standard bound} can be rewritten in terms of this $m$. By delving deeper into the structural properties of Balanced and Burdened diagrams, we are able to replace all of those diagram-dependent quantities (crossing number, number of $B$-circles and smoothing number $m$) with diagram-independent quantities (minimal degree of the Jones polynomial, number of link components, and leading coefficient of the Conway polynomial). We claim that any positive link with second Jones coefficient equal to $\pm 1$ or $\pm 2$ has a Burdened (or Oddly Burdened) diagram, and thus we are able to develop Theorem \ref{main result coeff 1 and 2}.

\section{Balanced and Burdened Diagrams}\label{Definitions}


\subsection{Balanced Diagrams}

Examples of each of the following appear in Figures \ref{Balanced type 0 example}, \ref{fig:Balanced type 1 example 2}, \ref{fig:Balanced type 2 example}, and \ref{fig:Oddly Balanced type 2 example}. A \say{hole} in a graph is an interior face.

\begin{definition}\label{Balanced type 0 def}
A \underline{\textbf{Balanced diagram of type $\mathbf{0}$}} is a (non-split) positive link diagram $D$ for which:
\begin{enumerate}
\item The reduced $A$-state graph of $D$ is a tree, and 
\item Every pair of $A$-circles share exactly $0$ or $2$ crossings.
\end{enumerate}
\end{definition}

\begin{definition}\label{Balanced type 1 def}
A \underline{\textbf{Balanced diagram of type $\mathbf{1}$}} is a (non-split) positive link diagram $D$ for which:
\begin{enumerate}
\item The reduced $A$-state graph of $D$ ($G_D'$) has exactly $1$ hole (interior face), and 
\item For each pair $v,w$ of $A$-circles, exactly one of the following is true:
\begin{enumerate}
    \item $v$ and $w$ share $0$ crossings, 
    \item $v$ and $w$ share exactly $1$ crossing, and the edge in $G_D'$ corresponding to that crossing is part of a cycle, or 
    \item $v$ and $w$ share exactly $2$ crossings, and the edge in $G_D'$ corresponding to those crossings is not part of a cycle. 
\end{enumerate}
\end{enumerate}
\end{definition}

\begin{definition}\label{Balanced type 2 def}
A \underline{\textbf{Balanced diagram of type $\mathbf{2}$}} is a (non-split) positive link diagram $D$ for which:
\begin{enumerate}
\item The reduced $A$-state graph of $D$ ($G_D'$) has exactly $2$ holes, and 
\item For each pair $v,w$ of $A$-circles, exactly one of the following is true:
\begin{enumerate}
    \item $v$ and $w$ share $0$ crossings, 
    \item $v$ and $w$ share exactly $1$ crossing, and the edge in $G_D'$ corresponding to that crossing is part of a cycle, or 
    \item $v$ and $w$ share exactly $2$ crossings, and the edge in $G_D'$ corresponding to those crossings is not part of a cycle. 
\end{enumerate}
\item An even number of edges in $G_D'$ are part of cycles. 
\end{enumerate}
\end{definition}

\begin{definition}\label{Oddly Balanced type 2 def}
If $D$ is a positive link diagram that satisfies criteria $(1)$ and $(2)$ in Definition \ref{Balanced type 2 def} but an odd number of edges in its reduced $A$-state graph are part of cycles, then we call $D$ an \underline{\textbf{Oddly Balanced diagram of type $\mathbf{2}$}}.
\end{definition}


\begin{figure}[h]
    \centering
    \begin{tikzpicture}[every path/.style={thick}, every
node/.style={transform shape, knot crossing, inner sep=2pt}, scale=0.65]
    \begin{scope}[xshift=0cm, scale=0.8]
    \node (tlo) at (-6.25,2.25) {};
    \node (tro) at (-3.75,2.25) {};
    \node (so) at (-5,1.25) {};
    \node (mlo) at (-5.5,0) {};
    \node (mro) at (-4.5, 0) {};
    \node (bo) at (-5,-1.25) {};
    \draw (bo.center) .. controls (bo.4 north west) and (mlo.4 south west) ..
    (mlo.center);
    \draw  (bo) .. controls (bo.4 north east) and (mro.4 south east).. (mro);
    \draw  [-{Stealth}] (mlo) .. controls (mlo.8 north west) and (so.3 south west) ..
    (so);
    \draw [-{Stealth}] (mlo.center) .. controls (mlo.8 north east) and (mro.2 north
    west) .. (mro);
    \draw  (mro.center) .. controls (mro.4 north east) and (so.8 south east) ..
    (so.center);
    \draw (mro.center) .. controls (mro.8 south west) and (mlo.3 south east) ..
    (mlo);
    \draw (so) .. controls (so.4 north east) and (tro.8 south west) .. (tro);
    \draw  (so.center) .. controls (so.8 north west) and (tlo.8 south
    east) .. (tlo.center);
    \draw (tlo.center) .. controls (tlo.16 north west) and (tro.16 north
    east) .. (tro);
    \draw (bo.center) .. controls (bo.16 south east) and (tro.16 south east) ..
    (tro.center);
    \draw (bo) .. controls (bo.16 south west) and (tlo.16 south
    west) .. (tlo);
    \draw (tlo) .. controls (tlo.4 north east) and (tro.4 north
    west) .. (tro.center);
    \end{scope}
    
    \begin{scope}[xshift=-0.75cm, scale=0.8]
    \node (tl) at (-1.25,2.25) {};
    \node (tltl) at (-1.3,2.3) {};
    \node (tlbl) at (-1.3,2.2) {};
    \node (tltr) at (-1.2,2.3) {};
    \node (tlbr) at (-1.2,2.2) {};
    \node (tr) at (1.25,2.25) {};
    \node (trtl) at (1.2,2.3) {};
    \node (trbl) at (1.2,2.2) {};
    \node (trtr) at (1.3,2.3) {};
    \node (trbr) at (1.3,2.2) {};
    \node (stl) at (-0.05,1.3) {};
    \node (str) at (0.05,1.3) {};
    \node (sbl) at (-0.05,1.2) {};
    \node (sbr) at (0.05,1.2) {};
    \node (ml) at (-0.5,0) {};
    \node (mltl) at (-0.55,0.05) {};
    \node (mlbl) at (-0.55,-0.05) {};
    \node (mltr) at (-0.45,0.05) {};
    \node (mlbr) at (-0.45,-0.05) {};
    \node (mr) at (0.5, 0) {};
    \node (mrtl) at (0.45, 0.05) {};
    \node (mrbl) at (0.45, -0.05) {};
    \node (mrtr) at (0.55, 0.05) {};
    \node (mrbr) at (0.55, -0.05) {};
    \node (b) at (0,-1.25) {};
    \node (btl) at (-0.05, -1.2) {};
    \node (bbl) at (-0.05, -1.3) {};
    \node (btr) at (0.05, -1.2) {};
    \node (bbr) at (0.05, -1.3) {};
    
    \draw [gray] (btl.center) .. controls (btl.4 north west) and (mlbl.4 south west) ..
    (mlbl.center);
    \draw [gray] (btr.center) .. controls (btr.4 north east) and (mrbr.4 south east)
    .. (mrbr.center);
    \draw [gray] (mltl.center) .. controls (mltl.8 north west) and (sbl.3 south west) ..
    (sbl.center);
    \draw [green] (mltr.center) .. controls (mltr.4 north east) and (mrtl.4 north
    west) .. (mrtl.center);
    \draw [gray] (mrtr.center) .. controls (mrtr.4 north east) and (sbr.8 south east) ..
    (sbr.center);
    \draw [green] (mrbl.center) .. controls (mrbl.4 south west) and (mlbr.4 south east) ..
    (mlbr.center);
    \draw [blue] (str.center) .. controls (str.8 north east) and (trbl.8 south west) .. (trbl.center);
    \draw [blue] (stl.center) .. controls (stl.8 north west) and (tlbr.8 south
    east) .. (tlbr.center);
    \draw [magenta] (tltl.center) .. controls (tltl.16 north west) and (trtr.16 north
    east) .. (trtr.center);
    \draw [blue] (bbr.center) .. controls (bbr.16 south east) and (trbr.16 south east) ..
    (trbr.center);
    \draw [blue] (bbl.center) .. controls (bbl.16 south west) and (tlbl.16 south
    west) .. (tlbl.center);
    \draw [magenta] (tltr.center) .. controls (tltr.4 north east) and (trtl.4 north
    west) .. (trtl.center);
    
    \draw [blue] (stl.center) -- (str.center) {};
    \draw [gray] (sbl.center) -- (sbr.center) {};
    \draw [gray] (mltl.center) -- (mlbl.center) {};
    \draw [green] (mltr.center) -- (mlbr.center) {};
    \draw [green] (mrtl.center) -- (mrbl.center) {};
    \draw [gray] (mrtr.center) -- (mrbr.center) {};
    \draw [magenta] (tltl.center) -- (tltr.center) {};
    \draw [blue] (tlbl.center) -- (tlbr.center) {};
    \draw [magenta] (trtl.center) -- (trtr.center) {};
    \draw [blue] (trbl.center) -- (trbr.center) {};
    \draw [gray] (btl.center) -- (btr.center) {};
    \draw [blue] (bbl.center) -- (bbr.center) {};
    \end{scope}

    
    \begin{scope}[xshift=2cm, scale =0.8]
    \node (magenta) at (0, 2.75) {};
    \node (m) at (0, 3) {};
    \node (blue) at (0, 1) {};
    \node (c) at (0, .75) {};
    \node (cr) at (0.25, 0.75) {};
    \node (orange) at (1.5, 0.75) {};
    \node (o) at (1.75, 0.75) {};
    \node (ou) at (1.75, 0.5) {};
    \node (ol) at (1.5, 0.75) {};
    \node (green) at (1.75, -1) {};
    \node (g) at (1.75, -1.25) {};

    \filldraw [color=magenta, fill=magenta!15] (m) circle[radius=0.32cm];
    \filldraw [color=blue, fill=blue!15] (c) circle[radius=0.32cm];
    \filldraw [color=gray, fill=gray!20] (o) circle[radius=0.32cm];
    \filldraw [color=green, fill=green!15] (g) circle[radius=0.32cm];
    \draw (blue) .. controls (blue.3 north east) and (magenta.3 south east) .. (magenta);
    \draw (blue) .. controls (blue.3 north west) and (magenta.3 south west) .. (magenta);
    \draw (orange) .. controls (orange.3 north west) and (cr.3 north east) .. (cr);
    \draw (orange) .. controls (orange.3 south west) and (cr.3 south east) .. (cr);

    \draw (green) .. controls (green.3 north west) and (ou.3 south west) .. (ou);
    \draw (green) .. controls (green.3 north east) and (ou.3 south east) .. (ou);
    \end{scope}
    
    \begin{scope}[xshift=4.65cm, scale=0.8]
    \node (magenta) at (0, 2.75) {};
    \node (m) at (0, 3) {};
    \node (cyan) at (0, 1) {};
    \node (c) at (0, .75) {};
    \node (cr) at (0.25, 0.75) {};
    \node (orange) at (1.5, 0.75) {};
    \node (o) at (1.75, 0.75) {};
    \node (ou) at (1.75, 0.5) {};
    \node (green) at (1.75, -1) {};
    \node (g) at (1.75, -1.25) {};
    
    \filldraw [color=magenta, fill=magenta!15] (m) circle[radius=0.32cm];
    \filldraw [color=blue, fill=blue!15] (c) circle[radius=0.32cm];
    \filldraw [color=gray, fill=gray!20] (o) circle[radius=0.32cm];
    \filldraw [color=green, fill=green!15] (g) circle[radius=0.32cm];
    \draw (cyan) -- (magenta);
    \draw (orange) -- (cr);
    \draw (green) -- (ou);
    \end{scope}
\end{tikzpicture} 
\vspace{-7pt}
\caption{ (Left to right:) A Balanced diagram of type $0$, its $A$-circles, its $A$-state graph, and its reduced $A$-state graph }
\label{Balanced type 0 example}
\end{figure}
\input{Figures/Balanced_type_1_example_2}
\input{Figures/Balanced_type_2_example}

\begin{figure}
    \centering
  \begin{tikzpicture}

  \begin{scope}[scale=0.7]
  
      \begin{scope}[scale=1, >=Stealth, xshift=-3.5cm, rotate around={-90:(-2.5,0)}]
       \node (a) at (-4,0){};
       \node (b) at (-4,1){};
       \node (c) at (-3,1) {};
       \node (d) at (-2,1){};
       \node (g) at (-2,1) {};
       \node (h) at (-1,1){};
       \node (i) at (-1,0) {};
       \node (j) at (-1,-1){};
       \node (k) at (-2,-1) {};
       \node (n) at (-2,-1){};
       \node (o) at (-3,-1) {};
       \node (p) at (-4,-1){};

       \node (middle) at (-2.5,0){};


       \draw [out=45, in=135, relative, decoration={markings, mark=at position 0.5 with {\arrow{}}}, postaction={decorate}] 
       (b.center) to (c);
       \draw [out=45, in=135, relative, decoration={markings, mark=at position 0.5 with {\arrow{}}}, postaction={decorate}] 
       (c.center) to (d);
       \draw [ out=60, in=135, relative, decoration={markings, mark=at position 0.5 with {\arrow{}}}, postaction={decorate}] 
       (g.center) to (h.center);

       \draw [out=45, in=135, relative, decoration={markings, mark=at position 0.5 with {\arrow{>}}}, postaction={decorate}] 
       (h.center) to (i);
       \draw [out=45, in=135, relative, decoration={markings, mark=at position 0.5 with {\arrow{}}}, postaction={decorate}] 
       (i.center) to (j.center);

       \draw [out=45, in=135, relative, decoration={markings, mark=at position 0.5 with {\arrow{<}}}, postaction={decorate}] 
       (j.center) to (k);
       \draw [out=45, in=135, relative, decoration={markings, mark=at position 0.5 with {\arrow{}}}, postaction={decorate}] 
       (n.center) to (o);
       \draw [out=60, in=135, relative, decoration={markings, mark=at position 0.5 with {\arrow{<}}}, postaction={decorate}] 
       (o.center) to (p.center);

       \draw [out=45, in=135, relative, decoration={markings, mark=at position 0.5 with {\arrow{}}}, postaction={decorate}] 
       (p.center) to (a);
       \draw [out=45, in=135, relative, decoration={markings, mark=at position 0.5 with {\arrow{>}}}, postaction={decorate}] 
       (a.center) to (b.center);


        \draw [decoration={markings, mark=at position 0.5 with {\arrow{>}}}, postaction={decorate}]
       (o) to (a.center);
       \draw [decoration={markings, mark=at position 0.5 with {\arrow{<}}}, postaction={decorate}]
       (a) to (c.center);


       \draw [decoration={markings, mark=at position 0.5 with {\arrow{<}}}, postaction={decorate}]
       (g) to (i.center);
       \draw [ decoration={markings, mark=at position 0.5 with {\arrow{>}}}, postaction={decorate}]
       (i) to (k.center);
       

       \draw [decoration={markings, mark=at position 0.5 with {\arrow{<}}}, postaction={decorate}]
       (d.center) to (middle);
       \draw []
       (middle) to (o.center);
       \draw []
       (c) to (middle.center);
       \draw [decoration={markings, mark=at position 0.5 with {\arrow{>}}}, postaction={decorate}]
       (middle.center) to (n);

       
       \end{scope}

      \begin{scope}[scale=1, >=Stealth, rotate around={-90:(-2.5,0)}]
       \node (a) at (-4,0){};
       \node (b) at (-4,1){};
       \node (c) at (-3,1) {};
       \node (d) at (-2,1){};
       \node (g) at (-2,1) {};
       \node (h) at (-1,1){};
       \node (i) at (-1,0) {};
       \node (j) at (-1,-1){};
       \node (k) at (-2,-1) {};
       \node (n) at (-2,-1){};
       \node (o) at (-3,-1) {};
       \node (p) at (-4,-1){};

       \node (middle) at (-2.5,0){};


       \draw [magenta, thick, out=45, in=135, relative, decoration={markings, mark=at position 0.5 with {\arrow{}}}, postaction={decorate}] 
       (b.center) to (c);
       \draw [cyan,thick, out=45, in=135, relative, decoration={markings, mark=at position 0.5 with {\arrow{}}}, postaction={decorate}] 
       (c.center) to (d);
       \draw [Periwinkle, thick, out=60, in=135, relative, decoration={markings, mark=at position 0.5 with {\arrow{}}}, postaction={decorate}] 
       (g.center) to (h.center);

       \draw [Periwinkle, thick, out=45, in=135, relative, decoration={markings, mark=at position 0.5 with {\arrow{}}}, postaction={decorate}] 
       (h.center) to (i);
       \draw [Emerald, thick, out=45, in=135, relative, decoration={markings, mark=at position 0.5 with {\arrow{}}}, postaction={decorate}] 
       (i.center) to (j.center);

       \draw [Emerald, thick, out=45, in=135, relative, decoration={markings, mark=at position 0.5 with {\arrow{}}}, postaction={decorate}] 
       (j.center) to (k);
       \draw [SpringGreen, thick, out=45, in=135, relative, decoration={markings, mark=at position 0.5 with {\arrow{}}}, postaction={decorate}] 
       (n.center) to (o);
       \draw [RoyalBlue, thick, out=60, in=135, relative, decoration={markings, mark=at position 0.5 with {\arrow{}}}, postaction={decorate}] 
       (o.center) to (p.center);

       \draw [RoyalBlue, thick, out=45, in=135, relative, decoration={markings, mark=at position 0.5 with {\arrow{}}}, postaction={decorate}] 
       (p.center) to (a);
       \draw [magenta, thick, out=45, in=135, relative, decoration={markings, mark=at position 0.5 with {\arrow{}}}, postaction={decorate}] 
       (a.center) to (b.center);


        \draw [RoyalBlue, thick, decoration={markings, mark=at position 0.5 with {\arrow{}}}, postaction={decorate}]
       (o) to (a.center);
       \draw [magenta, thick, decoration={markings, mark=at position 0.5 with {\arrow{}}}, postaction={decorate}]
       (a) to (c.center);


       \draw [Periwinkle, thick, decoration={markings, mark=at position 0.5 with {\arrow{}}}, postaction={decorate}]
       (g) to (i.center);
       \draw [Emerald, thick, decoration={markings, mark=at position 0.5 with {\arrow{}}}, postaction={decorate}]
       (i) to (k.center);
       

       \draw [cyan, thick, decoration={markings, mark=at position 0.5 with {\arrow{}}}, postaction={decorate}]
       (d.center) to (middle);
       \draw [SpringGreen, thick]
       (middle) to (o.center);
       \draw [cyan, thick]
       (c) to (middle.center);
       \draw [SpringGreen, thick, decoration={markings, mark=at position 0.5 with {\arrow{}}}, postaction={decorate}]
       (middle.center) to (n);

       
       \end{scope}
 \begin{scope}[xshift=0cm, thick, scale = 0.7, rotate around={-90:(0,0)}]

\node (a) at (-2,2){};
\node (b) at (0,2){};
\node (c) at (2,2){};
\node (d) at (2,0){};
\node (e) at (0,0){};
\node (f) at (-2,0){};

\draw[]
(a) -- (c) -- (d) -- (f) -- (a)
(b) -- (e);

\filldraw[color=magenta, fill=magenta!5,   thick]
(a) circle (1/3);
\filldraw[color=Cerulean, fill=Cerulean!5,   thick]
(b) circle (1/3);
\filldraw[color=Periwinkle!90, fill=Periwinkle!5,   thick]
(c) circle (1/3);
\filldraw[color=Emerald, fill=Emerald!5,   thick]
(d) circle (1/3);
\filldraw[color=SpringGreen!90, fill=SpringGreen!5,   thick]
(e) circle (1/3);
\filldraw[color=RoyalBlue, fill=RoyalBlue!5,   thick]
(f) circle (1/3);

\end{scope}

    \begin{scope}[xshift=2.5cm, thick, scale = 0.7, rotate around={-90:(0,0)}]

\node (a) at (-2,2){};
\node (b) at (0,2){};
\node (c) at (2,2){};
\node (d) at (2,0){};
\node (e) at (0,0){};
\node (f) at (-2,0){};

\draw[]
(a) -- (c) -- (d) -- (f) -- (a)
(b) -- (e);

\filldraw[color=magenta, fill=magenta!5,   thick]
(a) circle (1/3);
\filldraw[color=Cerulean, fill=Cerulean!5,   thick]
(b) circle (1/3);
\filldraw[color=Periwinkle!90, fill=Periwinkle!5,   thick]
(c) circle (1/3);
\filldraw[color=Emerald, fill=Emerald!5,   thick]
(d) circle (1/3);
\filldraw[color=SpringGreen!90, fill=SpringGreen!5,   thick]
(e) circle (1/3);
\filldraw[color=RoyalBlue, fill=RoyalBlue!5,   thick]
(f) circle (1/3);

\end{scope}

\end{scope}

  \end{tikzpicture}
    \caption{An Oddly Balanced diagram  of type $2$, its $A$-circles, $A$-state graph, and its reduced $A$-state graph}
    \label{fig:Oddly Balanced type 2 example}
\end{figure}



\subsection{Burdened Diagrams}

To generalize to a larger class of positive link diagrams, we consider \textit{Burdened diagrams}. We picture Burdened diagrams as would-be Balanced diagrams that are burdened down by extra crossings, making them violate criteria (b) or (c) in the above definitions. Examples of Burdened diagrams appear in Figure \ref{Burdened type 0 example} and Figure \ref{fig:Burdened type 1 example}.

\begin{definition}\label{Burdened type 0 def}
A \underline{\textbf{Burdened diagram of type $\mathbf{0}$}} is a (non-split) positive link diagram $D$ for which:
\begin{enumerate}
\item The reduced $A$-state graph of $D$ ($G_D'$) is a tree, and 
\item Every pair of $A$-circles share $0$ or \underline{at least} $2$ crossings.
\end{enumerate}
\end{definition}

\newpage

\begin{definition}\label{Burdened type 1 def}
A \underline{\textbf{Burdened diagram of type $\mathbf{1}$}} is a (non-split) positive link diagram $D$ for which:
\begin{enumerate}
\item The reduced $A$-state graph of $D$ ($G_D'$) has exactly $1$ hole (interior face), and 
\item For each pair $v,w$ of $A$-circles, exactly one of the following is true:
\begin{enumerate}
    \item $v$ and $w$ share $0$ crossings, 
    \item $v$ and $w$ share \underline{at least} $1$ crossing, and the edge in $G_D'$ corresponding to that crossing is part of a cycle, or 
    \item $v$ and $w$ share \underline{at least} $2$ crossings, and the edge in $G_D'$ corresponding to those crossings is not part of a cycle. 
\end{enumerate}
\end{enumerate}
\end{definition}

\begin{definition}\label{Burdened type 2 def}
A \underline{\textbf{Burdened diagram of type $\mathbf{2}$}} is a (non-split) positive link diagram $D$ for which:
\begin{enumerate}
\item The reduced $A$-state graph of $D$ ($G_D'$) has exactly $2$ holes, and 
\item For each pair $v,w$ of $A$-circles, exactly one of the following is true:
\begin{enumerate}
    \item $v$ and $w$ share $0$ crossings, 
    \item $v$ and $w$ share \underline{at least} $1$ crossing, and the edge in $G_D'$ corresponding to that crossing is part of a cycle, or 
    \item $v$ and $w$ share \underline{at least} $2$ crossings, and the edge in $G_D'$ corresponding to those crossings is not part of a cycle. 
\end{enumerate}
\item An even number of edges in $G_D'$ are part of cycles. 

\end{enumerate}
\end{definition}

  \begin{figure}[h]
    \centering
    \begin{tikzpicture}[every path/.style={thick}, every
node/.style={transform shape, knot crossing, inner sep=2pt},scale=0.6]
    \begin{scope}[scale=0.9]
    \node (tlo) at (-6.25,2.25) {};
    \node (tro) at (-3.75,2.25) {};
    \node (so) at (-5,1.25) {};
    \node (slo) at (-5.5,1.25) {};
    \node (sro) at (-4.5,1.25) {};
    \node (mlo) at (-5.5,0) {};
    \node (mro) at (-4.5, 0) {};
    \node (bo) at (-5,-1.25) {};
    \draw (bo.center) .. controls (bo.4 north west) and (mlo.4 south west) ..
    (mlo.center);
    \draw  (bo) .. controls (bo.4 north east) and (mro.4 south east).. (mro);
    \draw  [-{Stealth}] (mlo) .. controls (mlo.8 north west) and (slo.3 south west) ..(slo);
    \draw [-{Stealth}] (mlo.center) .. controls (mlo.8 north east) and (mro.2 north
    west) .. (mro);
    \draw (mro.center) .. controls (mro.4 north east) and (sro.8 south east) ..
    (sro.center);
    \draw (mro.center) .. controls (mro.8 south west) and (mlo.3 south east) ..
    (mlo);
    \draw (sro) .. controls (sro.north east) and (tro.south west) .. (tro);
    \draw (slo.center) .. controls (slo.north west) and (tlo.south
    east) .. (tlo.center);
    \draw [{Stealth}-] (sro) .. controls (sro.8 south west) and (slo.3 south east) ..
    (slo.center);
    \draw (sro.center) .. controls (sro.8 north west) and (slo.3 north east) ..
    (slo);
    \draw (tlo.center) .. controls (tlo.16 north west) and (tro.16 north
    east) .. (tro);
    \draw (bo.center) .. controls (bo.16 south east) and (tro.16 south east) ..
    (tro.center);
    \draw (bo) .. controls (bo.16 south west) and (tlo.16 south
    west) .. (tlo);
    \draw (tlo) .. controls (tlo.4 north east) and (tro.4 north
    west) .. (tro.center);
    \end{scope}
    
    \begin{scope}[xshift=-0.75cm, scale=0.9]
    \node (tl) at (-1.25,2.25) {};
    \node (tltl) at (-1.3,2.3) {};
    \node (tlbl) at (-1.3,2.2) {};
    \node (tltr) at (-1.2,2.3) {};
    \node (tlbr) at (-1.2,2.2) {};
    \node (tr) at (1.25,2.25) {};
    \node (trtl) at (1.2,2.3) {};
    \node (trbl) at (1.2,2.2) {};
    \node (trtr) at (1.3,2.3) {};
    \node (trbr) at (1.3,2.2) {};
    \node (stl) at (-0.05,1.3) {};
    \node (str) at (0.05,1.3) {};
    \node (sbl) at (-0.05,1.2) {};
    \node (sbr) at (0.05,1.2) {};
    \node (ml) at (-0.5,0) {};
    \node (mltl) at (-0.55,0.05) {};
    \node (mlbl) at (-0.55,-0.05) {};
    \node (mltr) at (-0.45,0.05) {};
    \node (mlbr) at (-0.45,-0.05) {};
    \node (mr) at (0.5, 0) {};
    \node (mrtl) at (0.45, 0.05) {};
    \node (mrbl) at (0.45, -0.05) {};
    \node (mrtr) at (0.55, 0.05) {};
    \node (mrbr) at (0.55, -0.05) {};
    \node (b) at (0,-1.25) {};
    \node (btl) at (-0.05, -1.2) {};
    \node (bbl) at (-0.05, -1.3) {};
    \node (btr) at (0.05, -1.2) {};
    \node (bbr) at (0.05, -1.3) {};
    
    \node (sl) at (-0.5,1.25) {};
    \node (sr) at (0.5,1.25) {};
    
    \node (sltl) at (-0.55,1.3) {};
    \node (sltr) at (-0.45,1.3) {};
    \node (slbl) at (-0.55,1.2) {};
    \node (slbr) at (-0.45,1.2) {};
    \node (srtl) at (0.45,1.3) {};
    \node (srtr) at (0.55,1.3) {};
    \node (srbl) at (0.45,1.2) {};
    \node (srbr) at (0.55,1.2) {};

    \draw  [gray] (mltl.center) .. controls (mltl.8 north west) and (slbl.3 south west) ..(slbl.center);
    \draw [gray] (mrtr.center) .. controls (mrtr.4 north east) and (srbr.8 south east) ..
    (srbr.center);
    \draw [blue] (srtr.center) .. controls (srtr.north east) and (trbl.south west) .. (trbl.center);
    \draw [blue] (sltl.center) .. controls (sltl.north west) and (tlbr.south
    east) .. (tlbr.center);
    \draw [gray] (srbl.center) .. controls (srbl.8 south west) and (slbr.3 south east) ..
    (slbr.center);
    \draw [blue] (srtl.center) .. controls (srtl.8 north west) and (sltr.3 north east) ..
    (sltr.center);
    
    
    
    
    \draw [gray] (btl.center) .. controls (btl.4 north west) and (mlbl.4 south west) ..
    (mlbl.center);
    \draw [gray] (btr.center) .. controls (btr.4 north east) and (mrbr.4 south east)
    .. (mrbr.center);
    \draw [green] (mltr.center) .. controls (mltr.4 north east) and (mrtl.4 north
    west) .. (mrtl.center);
    \draw [green] (mrbl.center) .. controls (mrbl.4 south west) and (mlbr.4 south east) ..
    (mlbr.center);
    \draw [magenta] (tltl.center) .. controls (tltl.16 north west) and (trtr.16 north
    east) .. (trtr.center);
    \draw [blue] (bbr.center) .. controls (bbr.16 south east) and (trbr.16 south east) ..
    (trbr.center);
    \draw [blue] (bbl.center) .. controls (bbl.16 south west) and (tlbl.16 south
    west) .. (tlbl.center);
    \draw [magenta] (tltr.center) .. controls (tltr.4 north east) and (trtl.4 north
    west) .. (trtl.center);
    
    \draw [gray] (mltl.center) -- (mlbl.center) {};
    \draw [green] (mltr.center) -- (mlbr.center) {};
    \draw [green] (mrtl.center) -- (mrbl.center) {};
    \draw [gray] (mrtr.center) -- (mrbr.center) {};
    \draw [magenta] (tltl.center) -- (tltr.center) {};
    \draw [blue] (tlbl.center) -- (tlbr.center) {};
    \draw [magenta] (trtl.center) -- (trtr.center) {};
    \draw [blue] (trbl.center) -- (trbr.center) {};
    \draw [gray] (btl.center) -- (btr.center) {};
    \draw [blue] (bbl.center) -- (bbr.center) {};
    
    \draw [blue] (sltl.center) -- (sltr.center) {};
    \draw [gray] (slbl.center) -- (slbr.center) {};
    \draw [blue] (srtl.center) -- (srtr.center) {};
    \draw [gray] (srbl.center) -- (srbr.center) {};
    
    \end{scope}
    
    
    \begin{scope}[xshift=2cm, scale =0.9]
    \node (magenta) at (0, 2.75) {};
    \node (m) at (0, 3) {};
    \node (blue) at (0, 1) {};
    \node (c) at (0, .75) {};
    \node (cr) at (0.25, 0.75) {};
    \node (orange) at (1.5, 0.75) {};
    \node (o) at (1.75, 0.75) {};
    \node (ou) at (1.75, 0.5) {};
    \node (ol) at (1.5, 0.75) {};
    \node (green) at (1.75, -1) {};
    \node (g) at (1.75, -1.25) {};

  \filldraw [color=magenta, fill=magenta!15] (m) circle[radius=0.32cm];
    \filldraw [color=blue, fill=blue!10] (c) circle[radius=0.32cm];
    \filldraw [color=gray, fill=gray!20] (o) circle[radius=0.32cm];
    \filldraw [color=green, fill=green!15] (g) circle[radius=0.32cm];
    \draw (cr) -- (ol);
    \draw (blue) .. controls (blue.3 north east) and (magenta.3 south east) .. (magenta);
    \draw (blue) .. controls (blue.3 north west) and (magenta.3 south west) .. (magenta);
    \draw (orange) .. controls (orange.3 north west) and (cr.3 north east) .. (cr);
    \draw (orange) .. controls (orange.3 south west) and (cr.3 south east) .. (cr);

    \draw (green) .. controls (green.3 north west) and (ou.3 south west) .. (ou);
    \draw (green) .. controls (green.3 north east) and (ou.3 south east) .. (ou);
    \end{scope}
    
    \begin{scope}[xshift=4.65cm, scale=0.9]
    \node (magenta) at (0, 2.75) {};
    \node (m) at (0, 3) {};
    \node (cyan) at (0, 1) {};
    \node (c) at (0, .75) {};
    \node (cr) at (0.25, 0.75) {};
    \node (orange) at (1.5, 0.75) {};
    \node (o) at (1.75, 0.75) {};
    \node (ou) at (1.75, 0.5) {};
    \node (green) at (1.75, -1) {};
    \node (g) at (1.75, -1.25) {};
    
     \filldraw [color=magenta, fill=magenta!15] (m) circle[radius=0.32cm];
    \filldraw [color=blue, fill=blue!10] (c) circle[radius=0.32cm];
    \filldraw [color=gray, fill=gray!20] (o) circle[radius=0.32cm];
    \filldraw [color=green, fill=green!15] (g) circle[radius=0.32cm];
    \draw (cyan) -- (magenta);
    \draw (orange) -- (cr);
    \draw (green) -- (ou);
    \end{scope}
\end{tikzpicture} 
\vspace{-10pt}
\caption{(Left to right:) A Burdened diagram of type $0$, its $A$-circles, its $A$-state graph, and its reduced $A$-state graph }
\label{Burdened type 0 example}
\end{figure}

\begin{figure}[h]
    \centering
  \begin{tikzpicture}
      \begin{scope}[scale=0.53, >=Stealth]

      \begin{scope}[scale=0.8, xshift=-6cm, rotate=45]
       \node (a) at (-3,0){};
       \node (b) at (-1,0){};
       \node (c) at (0,3) {};
       \node (d) at (0,1){};
       \node (e) at (1,0) {};
       \node (f) at (3,0){};
       \node (g) at (3,-3) {};
       \node (h) at (0, -3){};
       \node (i) at (-3,-3){};
       \node (j) at (-3, -2){};
       \node (k) at (-1, -2){};
       

       \draw [out=90, in=90, relative, decoration={markings, mark=at position 0.5 with {\arrow{<}}}, postaction={decorate}] 
       (a.center) to (c);
       \draw [out=90, in=90, relative, decoration={markings, mark=at position 0.5 with {\arrow{>}}}, postaction={decorate}] 
       (c.center) to (f);
       \draw [out=45, in=135, relative, decoration={markings, mark=at position 0.5 with {\arrow{}}}, postaction={decorate}] 
       (f.center) to (g.center);
       \draw [out=45, in=135, relative, decoration={markings, mark=at position 0.5 with {\arrow{<}}}, postaction={decorate}] 
       (g.center) to (h);
       \draw [out=45, in=135, relative, decoration={markings, mark=at position 0.5 with {\arrow{>}}}, postaction={decorate}] 
       (h.center) to (i.center);
       \draw [out=45, in=135, relative, decoration={markings, mark=at position 0.5 with {\arrow{}}}, postaction={decorate}] 
       (i.center) to (j.center);
       \draw [out=45, in=135, relative, decoration={markings, mark=at position 0.5 with {\arrow{>}}}, postaction={decorate}] 
       (j) to (a);

       \draw [ decoration={markings, mark=at position 0.5 with {\arrow{>}}}, postaction={decorate}] 
       (b) to (d.center);
       \draw [ decoration={markings, mark=at position 0.5 with {\arrow{>}}}, postaction={decorate}] 
       (e.center) to (d);
       \draw [out=-30, in=170, relative, decoration={markings, mark=at position 0.5 with {\arrow{<}}}, postaction={decorate}] 
       (h.center) to (e);
       \draw [decoration={markings, mark=at position 0.5 with {\arrow{<}}}, postaction={decorate}] 
       (h) to (k);
       \draw [out=40, in=140, relative, decoration={markings, mark=at position 0.5 with {\arrow{>}}}, postaction={decorate}] 
       (b.center) to (k.center);


       \draw [out=45, in=135, relative, decoration={markings, mark=at position 0.5 with {\arrow{>}}}, postaction={decorate}] 
       (a) to (b.center);
       \draw [out=45, in=140, relative, decoration={markings, mark=at position 0.5 with {\arrow{<}}}, postaction={decorate}] 
       (b) to (a.center);
       \draw [out=45, in=140, relative, decoration={markings, mark=at position 0.5 with {\arrow{>}}}, postaction={decorate}] 
       (d) to (c.center);
       \draw [out=45, in=135, relative, decoration={markings, mark=at position 0.5 with {\arrow{<}}}, postaction={decorate}] 
       (c) to (d.center);
       \draw [out=45, in=140, relative, decoration={markings, mark=at position 0.5 with {\arrow{>}}}, postaction={decorate}] 
       (f) to (e.center);
       \draw [out=45, in=135, relative, decoration={markings, mark=at position 0.5 with {\arrow{<}}}, postaction={decorate}] 
       (e) to (f.center);
       \draw [out=45, in=135, relative, decoration={markings, mark=at position 0.5 with {\arrow{>}}}, postaction={decorate}] 
       (j.center) to (k);
       \draw [out=45, in=135, relative, decoration={markings, mark=at position 0.5 with {\arrow{>}}}, postaction={decorate}] 
       (k.center) to (j);
       \end{scope}

      \begin{scope}[scale=0.8, xshift=2.5cm, rotate=45]
       \node (a) at (-3,0){};
       \node (b) at (-1,0){};
       \node (c) at (0,3) {};
       \node (d) at (0,1){};
       \node (e) at (1,0) {};
       \node (f) at (3,0){};
       \node (g) at (3,-3) {};
       \node (h) at (0, -3){};
       \node (i) at (-3,-3){};
       \node (j) at (-3, -2){};
       \node (k) at (-1, -2){};
       

       \draw [red, thick, out=90, in=90, relative, decoration={markings, mark=at position 0.5 with {\arrow{<}}}, postaction={decorate}] 
       (a.center) to (c);
       \draw [blue, thick, out=90, in=90, relative, decoration={markings, mark=at position 0.5 with {\arrow{>}}}, postaction={decorate}] 
       (c.center) to (f);
       \draw [orange, thick, out=45, in=135, relative, decoration={markings, mark=at position 0.5 with {\arrow{}}}, postaction={decorate}] 
       (f.center) to (g.center);
       \draw [orange, thick, out=45, in=135, relative, decoration={markings, mark=at position 0.5 with {\arrow{<}}}, postaction={decorate}] 
       (g.center) to (h);
       \draw [purp, thick, out=45, in=135, relative, decoration={markings, mark=at position 0.5 with {\arrow{>}}}, postaction={decorate}] 
       (h.center) to (i.center);
       \draw [purp, thick, out=45, in=135, relative, decoration={markings, mark=at position 0.5 with {\arrow{}}}, postaction={decorate}] 
       (i.center) to (j.center);
       \draw [purp, thick, out=45, in=135, relative, decoration={markings, mark=at position 0.5 with {\arrow{}}}, postaction={decorate}] 
       (j) to (a);

       \draw [red, thick, decoration={markings, mark=at position 0.5 with {\arrow{}}}, postaction={decorate}] 
       (b) to (d.center);
       \draw [blue, thick, decoration={markings, mark=at position 0.5 with {\arrow{}}}, postaction={decorate}] 
       (e.center) to (d);
       \draw [orange, thick, out=-30, in=170, relative, decoration={markings, mark=at position 0.5 with {\arrow{<}}}, postaction={decorate}] 
       (h.center) to (e);
       \draw [purp, thick, decoration={markings, mark=at position 0.5 with {\arrow{<}}}, postaction={decorate}] 
       (h) to (k);
       \draw [purp, thick, out=40, in=140, relative, decoration={markings, mark=at position 0.5 with {\arrow{>}}}, postaction={decorate}] 
       (b.center) to (k.center);


       \draw [red, thick, out=45, in=135, relative, decoration={markings, mark=at position 0.5 with {\arrow{>}}}, postaction={decorate}] 
       (a) to (b.center);
       \draw [purp, thick, out=45, in=140, relative, decoration={markings, mark=at position 0.5 with {\arrow{<}}}, postaction={decorate}] 
       (b) to (a.center);
       \draw [red, thick, out=45, in=140, relative, decoration={markings, mark=at position 0.5 with {\arrow{>}}}, postaction={decorate}] 
       (d) to (c.center);
       \draw [blue, thick, out=45, in=135, relative, decoration={markings, mark=at position 0.5 with {\arrow{<}}}, postaction={decorate}] 
       (c) to (d.center);
       \draw [orange, thick, out=45, in=140, relative, decoration={markings, mark=at position 0.5 with {\arrow{>}}}, postaction={decorate}] 
       (f) to (e.center);
       \draw [blue, thick, out=45, in=135, relative, decoration={markings, mark=at position 0.5 with {\arrow{<}}}, postaction={decorate}] 
       (e) to (f.center);
       \draw [green, thick, out=45, in=135, relative, decoration={markings, mark=at position 0.5 with {\arrow{>}}}, postaction={decorate}] 
       (j.center) to (k);
       \draw [green, thick, out=45, in=135, relative, decoration={markings, mark=at position 0.5 with {\arrow{>}}}, postaction={decorate}] 
       (k.center) to (j);
       \end{scope}

  \begin{scope}[scale = 1, xshift=7.75cm, yshift=0.5cm, rotate = 45]
  
       \node (a) at (-1,1){};
       \node (b) at (1,1){};
       \node (c) at (1,-1) {};
       \node (d) at (-1,-1){};
       \node (e) at (-2, -2) {};
        \draw[]
        (a) -- (b)
        (b) -- (c) 
        (d) -- (a)
        (c) -- (d);
        \draw[out=20, in=160, relative]
        (a) to (b)
        (b) to (c)
        (d) to (a)
        (d) to (e)
        (e) to (d);
        \draw[out=20, in=160, relative]
        (a) to (b)
        (b) to (c)
        (d) to (a);

        \filldraw[color=red, fill=red!10] 
        (a) circle (1/3);
     \filldraw[color=blue, fill=blue!10] 
        (b) circle (1/3);
     \filldraw[color=orange, fill=orange!10] 
        (c) circle (1/3);
     \filldraw[color=purp, fill=purp!10] 
        (d) circle (1/3);
     \filldraw[color=green, fill=green!10] 
        (e) circle (1/3);
        
       \end{scope}

  \begin{scope}[scale = 1, xshift=12cm, yshift=0.5cm, rotate = 45]
  
       \node (a) at (-1,1){};
       \node (b) at (1,1){};
       \node (c) at (1,-1) {};
       \node (d) at (-1,-1){};
       \node (e) at (-2, -2) {};
        \draw[]
        (a) -- (b)
        (b) -- (c) 
        (d) -- (a)
        (c) -- (d)
        (d) -- (e);

        \filldraw[color=red, fill=red!10] 
        (a) circle (1/3);
     \filldraw[color=blue, fill=blue!10] 
        (b) circle (1/3);
     \filldraw[color=orange, fill=orange!10] 
        (c) circle (1/3);
     \filldraw[color=purp, fill=purp!10] 
        (d) circle (1/3);
     \filldraw[color=green, fill=green!10] 
        (e) circle (1/3);
        
       \end{scope}

       
       \end{scope}
       
  \end{tikzpicture}
  \vspace{-5pt}
     \caption{A Burdened diagram of type $1$, its $A$-circles, $A$-state graph, and reduced $A$-state graph}
    \label{fig:Burdened type 1 example}
\end{figure}
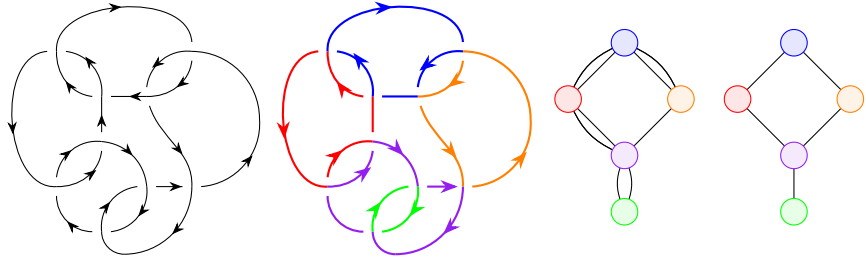

\begin{definition}\label{Oddly Burdened type 2 def}
If $D$ is a positive link diagram that satisfies criteria $(1)$ and $(2)$ in Definition \ref{Burdened type 2 def} but  an odd number of edges in $G_D'$ are part of cycles, then we call $D$ an \underline{\textbf{Oddly Burdened diagram of type $\mathbf{2}$}}. 
\end{definition}

\begin{definition}
A \underline{\textbf{$\mathbf{k}$-Burdened diagram}} is a Burdened diagram whose reduced $A$-state graph contains one hole (interior face), and that hole is bounded by $k$ edges (where we ignore any cut edges that might \say{protrude} into the hole. 

A \underline{\textbf{$\mathbf{(k_1, k_2)}$-Burdened (or Oddly Burdened) diagram}} is a Burdened (or Oddly Burdened) diagram whose reduced $A$-state graph contains $2$ holes, with one hole bounded by $k_1$ edges (again ignoring any cut edges) and the other bounded by $k_2$ edges (note that the two sets of edges are not necessarily disjoint).  
\end{definition}

\begin{remark}
    For links with $k$-Burdened diagrams of type $1$, the value of $k$ is unique. That is, if $D$ is a $k$-Burdened diagram and $D'$ is a $k'$-Burdened diagram for $k\neq k'$, then $D$ and $D'$ do not represent the same link. (We will see later on that this follows from Lemma \ref{lead coeff k-Bur}, which says that this $k$ will be twice the leading coefficient of the Conway polynomial of the link.)  However, this is not true for diagrams of type $2$. In Figure \ref{fig:k-sequence is not invariant} we see an example of a $(6,4)$-Balanced diagram that is equivalent to an $(8,4)$-Oddly Balanced diagram. Not only are the values of $k_1$ and $k_2$ not unique, the set of knots with Balanced type $2$ diagrams and the set of knots with Oddly Balanced type $2$ diagrams are not mutually exclusive.
\end{remark}

\input{Figures/k-sequence_is_not_invariant}

In this paper, we develop the following bounds on the Jones polynomial of Burdened diagrams. 

\begin{theorem}\label{main result for Bur type 1}
    Let $L$ be a link with a Burdened diagram of type $1$. Then 
    $$\max \deg V_L \leq 4 \min \deg V_L + \frac{n-1}{2} + 2\lead \coeff \nabla_L -2,$$ where $V$ is the Jones polynomial, $\nabla$ is the Conway polynomial, and $n$ is the number of link components. 
\end{theorem}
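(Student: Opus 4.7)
The plan is to begin with the standard upper bound $\max\deg V_L \leq c(D) + \tfrac{B_D - 1}{2}$ from equation \eqref{standard bound} and systematically replace each diagram-dependent quantity on the right with a link invariant, using the structural features of a Burdened diagram of type $1$.

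First I would apply Euler's formula to the reduced $A$-state graph $G'_D$. Since $G'_D$ has exactly one hole and hence two faces, we obtain $|E(G'_D)| = A_D$. Writing $k$ for the number of edges bounding the unique cycle, the remaining $A_D - k$ edges are cut edges. By condition (2) of Definition \ref{Burdened type 1 def}, each cycle edge accounts for at least $1$ crossing and each cut edge for at least $2$, so $c(D) = 2A_D - k + m$ for some integer $m \geq 0$ measuring the number of excess crossings beyond the Balanced minimum. Combining with the positivity identity $\min\deg V_L = \tfrac{1}{2}(c(D) - A_D + 1)$ from equation \eqref{min degree pos} yields
\[
c(D) = 4 \min\deg V_L + k - m - 2.
\]
Invoking the forthcoming Lemma \ref{lead coeff k-Bur}, which identifies $k = 2\,\lead\coeff \nabla_L$, then rewrites $c(D)$ purely in terms of link invariants together with the excess $m$.

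The main obstacle will be controlling $B_D$. For Balanced diagrams ($m = 0$), Theorem \ref{Bal type 1 B=n} gives $B_D = n$ exactly. For Burdened diagrams I expect a bound of the form $B_D \leq n + m$, reflecting the intuition that each excess crossing, beyond the Balanced baseline, can increase the $B$-circle count by at most one more than it changes the number of link components. Such an inequality should be established by induction on $m$: smooth one excess crossing of $D$ (which remains Burdened of type $1$ with $m-1$ excess crossings), then track how $B_D$ and $n$ each change by $\pm 1$ under the local structure of the two possible $B$-state outcomes. This bookkeeping, tied to the combinatorics of how parallel crossings between a fixed pair of $A$-circles merge or split the enclosing $B$-circles, is where the real technical work lies.

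Granted such a bound on $B_D$, the theorem follows by direct substitution:
\[
\max\deg V_L \;\leq\; c(D) + \tfrac{B_D - 1}{2} \;\leq\; \bigl(4\min\deg V_L + 2\lead\coeff\nabla_L - m - 2\bigr) + \tfrac{(n+m)-1}{2} \;\leq\; 4\min\deg V_L + \tfrac{n-1}{2} + 2\lead\coeff\nabla_L - 2,
\]
where the final step discards the nonpositive term $-\tfrac{m}{2}$ using $m \geq 0$. This is exactly the asserted bound.
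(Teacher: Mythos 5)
Your route is essentially the paper's: start from $\max \deg V_L \leq c(D) + \tfrac{B_D-1}{2}$, use Euler's formula on the reduced $A$-state graph to get $|E(G_D')| = A_D$ and hence $c(D) = 2A_D - k + m$, combine with $\min\deg V_L = \tfrac{1}{2}(c(D)-A_D+1)$ to obtain $c(D) = 4\min\deg V_L + k - m - 2$ (this is Proposition \ref{m for coeff pm 1}), identify $k = 2\lead\coeff\nabla_L$ via Lemma \ref{lead coeff k-Bur}, and control $B_D$ by reducing to the Balanced case $B=n$ of Theorem \ref{Bal type 1 B=n}. The edge count, the crossing count, and the final arithmetic are all correct.

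The one genuine gap is the bound $B_D \leq n + m$. The bookkeeping you sketch --- smooth the $m$ excess crossings one at a time and track the changes --- gives, at each step, $\Delta B = \pm 1$ and $\Delta n = \pm 1$ \emph{independently}; in the worst case a single added parallel crossing splits a $B$-circle while merging two link components, so $B - n$ can grow by $2$ per step. Starting from $B_{D'} = n(D')$ for the Balanced diagram $D'$, this yields only $B_D \leq n(D) + 2m$ (the paper's Corollary \ref{bound on B-circles Bur}), not $B_D \leq n(D)+m$. Indeed, since $\Delta B$ and $\Delta n$ each lie in $\{\pm 1\}$, your per-crossing claim ``$B$ increases by at most one more than $n$ does'' is, by parity, equivalent to $\Delta B \leq \Delta n$, which would force $B_D \leq n(D)$ outright; there is no reason for that correlation, and the paper never establishes it. Fortunately the weaker bound is exactly what the theorem needs: substituting $B_D \leq n+2m$ into your final display gives
\[
c(D) + \tfrac{B_D-1}{2} \;\leq\; \bigl(4\min\deg V_L + 2\lead\coeff\nabla_L - m - 2\bigr) + \tfrac{n-1}{2} + m \;=\; 4\min\deg V_L + \tfrac{n-1}{2} + 2\lead\coeff\nabla_L - 2,
\]
with the $m$'s cancelling exactly rather than leaving the discarded slack $-\tfrac{m}{2}$. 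So replace the unproven (and likely false) $B_D \leq n+m$ with $B_D \leq n+2m$, justified as in Corollary \ref{bound on B-circles Bur}, and your argument closes; as written, the key lemma you defer is stated in a form your own sketch cannot deliver.
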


\begin{theorem}\label{main result for Bur and Odd Bur type 2}
    Let $L$ be a link with a Burdened diagram of type $2$ or an Oddly Burdened diagram of type $2$. Then 
    $$\max \deg V_L \leq 4 \min \deg V_L + \frac{n-1}{2} + \lead \coeff \nabla_L,$$ where $V$ is the Jones polynomial, $\nabla$ is the Conway polynomial, and $n$ is the number of link components.
\end{theorem}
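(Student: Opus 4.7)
The plan is to follow the strategy used for Theorem \ref{main result for Bur type 1}, adapted to the two-hole structure of the reduced $A$-state graph. First I would combine the standard bound \ref{standard bound} with the minimum degree identity \ref{min degree pos} to eliminate $c(D)$, obtaining
\[
\max \deg V_L \leq c(D) + \frac{B_D-1}{2} = 2\min \deg V_L + A_D - 1 + \frac{B_D-1}{2}.
\]

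Next I would exploit the definition of a Burdened (or Oddly Burdened) diagram of type $2$. Since the reduced $A$-state graph $G_D'$ is a connected planar graph with exactly $2$ interior faces, Euler's formula gives $|E(G_D')| = A_D + 1$. Let $E_c$ denote the number of cyclic (non-bridge) edges of $G_D'$, and let $m \geq 0$ count the excess crossings in $D$ beyond the Balanced baseline (each bridge edge corresponding to $\geq 2$ crossings, each cyclic edge to $\geq 1$). Then
\[
c(D) = 2(A_D + 1 - E_c) + E_c + m = 2A_D + 2 - E_c + m,
\]
and eliminating $A_D$ via \ref{min degree pos} yields
\[
\max \deg V_L \leq 4\min \deg V_L + (E_c - m - 4) + \frac{B_D-1}{2}.
\]

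Now I would apply the $B$-circle count results. For a Burdened diagram of type $2$, Theorem \ref{Bal type 2 B=n} (together with the smoothing reduction to a Balanced diagram) forces $B_D = n$. For an Oddly Burdened diagram of type $2$, Theorem \ref{Oddly Bal B almost equal n} gives $B_D \in \{n, n\pm 2\}$, with the worst case $B_D = n+2$ adding $+1$ to the bound. It thus suffices to establish the structural inequalities
\[
E_c - m \leq \lead \coeff \nabla_L + 4 \quad \text{(Burdened type $2$)},
\]
\[
E_c - m \leq \lead \coeff \nabla_L + 3 \quad \text{(Oddly Burdened type $2$)},
\]
which are the type-$2$ analogs of Lemma \ref{lead coeff k-Bur}. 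The natural route is to read $\lead \coeff \nabla_L$ off a minor of the Seifert matrix of the canonical Seifert surface for $D$, whose block structure is dictated by the two interior faces of $G_D'$ and the bridge/cyclic edge pattern; the Balanced-versus-Oddly parity should correspond to a parity condition on this determinant, producing the $-4$ versus $-3$ discrepancy.

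The hard part will be this last structural lemma, especially in the Oddly Burdened case, where one must show that the $+1$ gained from the possible excess of $B_D$ is exactly cancelled by a $-1$ coming from odd cycle parity on the Conway side. Making this cancellation rigorous requires a careful combinatorial analysis matching the face structure of $G_D'$ with how bridge multi-crossings and cyclic multi-crossings contribute to the relevant Seifert-matrix minor, and is where the bulk of the remaining work lives.
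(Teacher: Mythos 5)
Your setup through the identity $\max \deg V_L \leq 4\min \deg V_L + (E_c - m - 4) + \frac{B_D-1}{2}$ is correct and matches the paper's Proposition \ref{m for coeff 2} (with $E_c = k_1+k_2-x$). After that there are two genuine problems. First, you assert that $B_D = n$ for a Burdened diagram of type $2$, citing Theorem \ref{Bal type 2 B=n}; but that theorem applies only to \emph{Balanced} diagrams, and the smoothing that reduces a Burdened diagram to a Balanced one changes $B$ by $\pm 1$ at each of the $m$ steps. The correct statement (Corollary \ref{bound on B-circles Bur}) is $B_D \leq n + 2m$, and similarly $B_D \leq n + 2(m+1)$ in the Oddly Burdened case. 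Feeding these into your display makes the $m$'s cancel, so the inequality you actually need is $E_c - 4 \leq \lead\coeff\nabla_L$ (resp.\ $E_c - 3 \leq \lead\coeff\nabla_L$), with no $m$ in it. Your versions with $-m$ on the left happen to be weaker consequences of these, but they are reached through a false intermediate step.

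Second, the "structural lemma" you defer is the entire content of the theorem, and the route you sketch is unlikely to close it as stated: $\lead\coeff\nabla_L$ is \emph{not} a function of $E_c$ alone. The paper must first compute it exactly as $\frac{k_1k_2-x^2}{4}$ (Burdened) and $\frac{k_1k_2-x^2+1}{4}$ (Oddly Burdened), which requires (a) reduction lemmas showing that burden crossings and cut edges do not affect the leading Conway term (Lemmas \ref{smooth away burdens, keep lead coeff} and \ref{contract cut edges, same result}, themselves resting on a Jones/Conway skein analysis of parallel crossings), and (b) an explicit skein computation for the pretzel links $P(-p,-q,-r)$. Only then does one prove the purely numerical inequality $k_1+k_2-x-4 \leq \frac{k_1k_2-x^2}{4}$ (and its odd analog) by substituting $y=k_1-x$, $z=k_2-x$ and factoring. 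Your proposed Seifert-matrix-minor argument with a "parity of the determinant" explaining the Balanced/Oddly discrepancy is not carried out, and since it discards the decomposition of $E_c$ into $(k_1,k_2,x)$ it cannot even state the needed exact value of the leading coefficient. As written, the proposal does not constitute a proof.
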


This allows to generalize to larger classes of positive links, because the second Jones coefficient tells us about the number of holes in the reduced $A$-state graph of a positive link diagram. 

\begin{theorem}\label{second Jones coeff counts holes}(Stoimenow)\cite{Stoimenow} 

Let $L$ be a positive link with positive diagram $D$. Then the second coefficient $V_1$ of the Jones polynomial satisfies:
$$(-1)^{n(L)-1}V_1=s(D)-1-\#(\text{pairs of Seifert circles that share at least one crossing}).$$ 

\end{theorem}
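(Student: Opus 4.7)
The plan is to read $V_1$ directly off the Kauffman state-sum expansion of the Jones polynomial. Because $D$ is positive the writhe satisfies $w(D)=c(D)$, so
\[V_L(t) = (-A)^{-3c}\langle D\rangle\big|_{t=A^{-4}} = (-1)^c A^{-3c}\langle D\rangle\big|_{t=A^{-4}}.\]
Since positive diagrams are $A$-adequate, the maximum $A$-degree of $\langle D\rangle$ is $c + 2s - 2$ and is attained only by the $A$-state; this is the fact underlying equation (2.1). Translating through $t = A^{-4}$, the second coefficient $V_1$ equals $(-1)^c$ times the coefficient of $A^{c + 2s - 6}$ in $\langle D\rangle$. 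The proof therefore reduces to computing this single bracket coefficient.

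Expanding $\langle D\rangle = \sum_T A^{c - 2|T|}(-A^2 - A^{-2})^{|S_T|-1}$ over subsets $T$ of crossings, the $k$-th expansion term from state $S_T$ sits at $A$-degree $c - 2|T| + 2(|S_T| - 1) - 4k$. Setting this equal to $c + 2s - 6$ forces $|S_T| = s + |T| + 2k - 2$. Since $|S_T| \leq s + |T|$ with equality for $|T| \geq 1$ requiring self-loops in $G_D$ (forbidden by $A$-adequacy), only the cases $(|T|, k) = (0, 1)$ and $(|T|, k) = (m, 0)$ with $m \geq 1$ can produce a nonzero contribution.

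The central combinatorial input is the standard ribbon-graph bound $|S_T| \leq s + |T| - 2(v(T) - c(T))$, where $v(T)$ and $c(T)$ denote the numbers of vertices and connected components of the subgraph of $G_D$ induced by the edges $T$, with equality precisely when the associated ribbon subgraph has genus zero. For any nonempty $T$, $v(T) - c(T) \geq 1$, and $|S_T| = s + |T| - 2$ forces $v(T) - c(T) = 1$, meaning that all edges of $T$ lie between a single pair of $A$-circles. Writing $m_{ij}$ for the number of crossings between $A$-circles $C_i$ and $C_j$, so that $\sum_{(i,j)} m_{ij} = c$ and the number of pairs with $m_{ij} > 0$ equals $e'$ (the edge count of the reduced $A$-state graph $G_D'$), the coefficient of $A^{c+2s-6}$ in $\langle D\rangle$ is
\[(-1)^{s-1}(s-1) \;+\; c\,(-1)^{s} \;+\; \sum_{m \geq 2}(-1)^{s+m-1}\sum_{(i,j)}\binom{m_{ij}}{m}.\]
Applying the binomial identity $\sum_{m \geq 2}(-1)^{m-1}\binom{q}{m} = 1 - q$ collapses the double sum to $(-1)^{s}(e' - c)$, and the three terms combine to $(-1)^{s-1}(s - 1 - e')$.

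Multiplying by $(-1)^c$ and invoking the parity identity $c + s \equiv n \pmod{2}$ (a consequence of the genus formula $g = (c - s - n + 2)/2$ for a positive Seifert surface) yields $V_1 = (-1)^{c+s-1}(s - 1 - e') = (-1)^{n-1}(s - 1 - e')$, which is the claimed equality since $e'$ is precisely the number of pairs of Seifert circles sharing at least one crossing. The main obstacle is establishing the genus-zero upper bound $|S_T| \leq s + |T| - 2(v(T) - c(T))$: while intuitive from the Turaev-surface / ribbon-graph picture of the all-$A$ resolution, a careful induction on $|T|$ (using $A$-adequacy to control how each single $B$-flip changes the loop count) is needed to make it rigorous.
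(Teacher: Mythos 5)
The paper does not actually prove this statement: Theorem \ref{second Jones coeff counts holes} is imported verbatim from Stoimenow's work and used as a black box, so there is no in-paper argument to compare yours against. Your state-sum derivation is, however, correct, and it is essentially the original argument: all the arithmetic checks out. The reduction of $V_1$ to the coefficient of $A^{c+2s-6}$ in $\langle D\rangle$ is valid because every monomial of $\langle D\rangle$ has degree $\equiv c+2s-2 \pmod 4$; the case analysis $(|T|,k)=(0,1)$ or $(m,0)$ correctly uses that positive diagrams are $A$-adequate (a Seifert-smoothed crossing never joins a circle to itself); the identity $\sum_{m\ge 2}(-1)^{m-1}\binom{q}{m}=1-q$ is applied only to pairs with $m_{ij}\ge 1$, where it holds; and the parity step $c+s\equiv n\pmod 2$ follows from connectedness of the Seifert surface as you say.

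The one point deserving more care is the step you yourself flag, and I would add that it has two halves. The upper bound $|S_T|\le s+|T|-2(v(T)-c(T))$ does not really need an induction on single $B$-flips: build the state surface by capping the $A$-circles with discs and attaching one band per crossing of $T$; then $|S_T|$ is its number of boundary circles, its Euler characteristic is $s-|T|$, and $f_i\le 2-\chi_i$ on each connected component (valid for orientable and non-orientable surfaces alike) gives the bound directly. The half you gloss over is the \emph{equality} direction: to count every $m$-subset of a parallel family $\{$crossings joining $C_i$ to $C_j\}$ as contributing, you must know that each such $T$ actually achieves $|S_T|=s+m-2$, i.e.\ that the two-disc-plus-$m$-band subsurface has genus zero. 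This follows from planarity (the bands all lie in the complementary region of the $A$-state adjacent to both circles, and the two discs can be chosen on the opposite sides, so the subsurface embeds in $S^2$), but it is not automatic from the inequality and should be stated. With that addendum your proof is complete and, as far as I can tell, faithfully reconstructs Stoimenow's.
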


As Stoimenow notes, this means the absolute value of second Jones coefficient is exactly the first Betti number of the reduced Seifert graph. Since in positive diagrams the Seifert circles are the $A$-circles, this means that the absolute value of second coefficient of the Jones polynomial does indeed count the number of holes in the reduced $A$-state graph of a positive diagram $D$. 
Since the Jones polynomial is a diagram-independent link invariant, this means that \underline{every} positive diagram of a positive link with second Jones coefficient $0, \pm 1,$ or $\pm 2$ will have $0, 1, $ or $2$ (respectively) holes in its reduced $A$-state graph.  

\newpage 
\begin{corollary}\label{pos links must have bur diagrams}
    \begin{enumalph}
        \item Every reduced positive diagram of a link with second Jones coefficient equal to $0$ is a Burdened diagram of type $0$. 
        \item Every reduced positive diagram of a link with second Jones coefficient equal to $\pm 1$ is a Burdened diagram of type $1$. 
        \item Every reduced positive diagram of a link with second Jones coefficient equal to $\pm 2$ is either a Burdened diagram of type $2$ or an Oddly Burdened diagram of type $2$. 
    \end{enumalph}
\end{corollary}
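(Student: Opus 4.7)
The plan is twofold: use Stoimenow's Theorem \ref{second Jones coeff counts holes} to pin down the number of holes in the reduced $A$-state graph, and then use the reducedness of $D$ to verify the remaining structural conditions in the definitions of Burdened (and Oddly Burdened) diagrams. The first step is immediate: since $D$ is positive, the reduced $A$-state graph coincides with the reduced Seifert graph, and Stoimenow's theorem gives that its first Betti number equals $|V_1| \in \{0,1,2\}$, producing the required number of holes in cases (a), (b), and (c) respectively.

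The technical core is the following lemma: \emph{in a reduced positive link diagram, if two $A$-circles $v$ and $w$ share exactly one crossing $c$, then the edge $e_{vw}$ of the reduced $A$-state graph lies on a cycle.} I would prove it by contrapositive. If $e_{vw}$ is not on any cycle in the reduced graph, then, because $v$ and $w$ share only one crossing, $e_{vw}$ is also the unique edge between $v$ and $w$ in the unreduced $A$-state graph $G_D$, and since any cycle in $G_D$ descends to a cycle in the reduced graph, $c$ must be a bridge of $G_D$. The Seifert circles of $D$ embed disjointly in the plane, and removing $c$ splits them into two groups with no arc of $D$ connecting them; one can then enclose one group in a planar disk whose boundary meets $D$ only at $c$, showing $c$ is nugatory and contradicting reducedness.

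With the lemma in hand, the three parts follow quickly. For (a), a tree has no cycles, so the lemma forces every pair of $A$-circles to share $0$ or at least $2$ crossings, which is exactly the definition of Burdened of type $0$. For (b), each pair of $A$-circles either shares $0$ crossings, shares $\geq 1$ crossing with $e_{vw}$ lying on a cycle (the $=1$ subcase is forced on a cycle by the lemma), or shares $\geq 2$ crossings with $e_{vw}$ off a cycle, matching the three subcases of Definition \ref{Burdened type 1 def}. For (c), the same case analysis establishes condition (2) of Definition \ref{Burdened type 2 def}; the number of edges on cycles is then either even or odd, making $D$ a Burdened or Oddly Burdened diagram of type $2$ accordingly.

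The main obstacle is the graph-to-diagram step inside the lemma: showing that a bridge in the Seifert graph of a positive diagram yields a nugatory crossing in the diagram. The planar-disk construction sketched above should work because the Seifert circles embed disjointly in the plane and the crossings appear as isolated chords between them, but making the argument fully rigorous requires some care with the planar embedding and is the one place where positivity is genuinely used beyond identifying the $A$-state with the Seifert state. Everything else is essentially combinatorial bookkeeping against the definitions.
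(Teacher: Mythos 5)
Your proposal follows the same route as the paper: invoke Stoimenow's theorem to fix the number of holes in the reduced $A$-state graph, and then observe that a pair of $A$-circles sharing exactly one crossing whose edge is not on a cycle would force a nugatory crossing, contradicting reducedness. The paper compresses that second step into a one-line parenthetical, whereas you correctly identify it as the technical core and sketch the bridge-implies-nugatory argument in detail; the content is the same.
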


\begin{proof}
    Let $L$ be a positive link with second Jones coefficient equal to $0, \pm 1, $ or $\pm 2$. Let $D$ be a reduced positive link diagram of $L$. By \ref{second Jones coeff counts holes}, the reduced $A$-state graph of $D$ must contain exactly $0, 1, $ or $2$ (respectively) holes. If any pair of $A$-circles $v,w$ share exactly one crossing, then the corresponding edge in the reduced $A$-state graph is part of a cycle (otherwise, the diagram $D$ is not reduced). 

    Thus $D$ indeed satisfies definition \ref{Burdened type 0 def}, \ref{Burdened type 1 def}, \ref{Burdened type 2 def}, or \ref{Oddly Burdened type 2 def}. 
\end{proof}

\begin{theorem}\label{main result coeff 1 and 2}
    Let $L$ be a positive link with $n$ link components, Jones polynomial $V_L$, and Conway polynomial $\nabla_L$. 
    \begin{enumalph}
       \item  If the second coefficient of $V_L$ is $\pm 1$, then 
    $$\max \deg V_L \leq 4 \min \deg V_L + \frac{n-1}{2} + 2\lead \coeff \nabla_L -2. $$
        \item If the second coefficient of $V_L$ is $\pm 2$, then 
    $$\max \deg V_L \leq 4 \min \deg V_L + \frac{n-1}{2} + \lead \coeff \nabla_L.$$ 
     \end{enumalph}
\end{theorem}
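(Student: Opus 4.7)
The proof is a direct synthesis of the three major tools developed earlier in the paper: the classification result in Corollary \ref{pos links must have bur diagrams} and the two Burdened-diagram bounds in Theorems \ref{main result for Bur type 1} and \ref{main result for Bur and Odd Bur type 2}. The key observation is that both sides of each inequality in Theorem \ref{main result coeff 1 and 2} are diagram-independent link invariants, so the task reduces to producing \emph{some} diagram of $L$ to which the Burdened theorems apply.

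First I would pick a positive diagram of $L$, which exists because $L$ is a positive link by hypothesis, and reduce it by removing any nugatory crossings. The result is a reduced positive diagram $D$ of $L$; positivity is preserved under this reduction and all invariants of $L$ are unchanged. Now Corollary \ref{pos links must have bur diagrams} shows that $D$ is automatically of the correct Burdened type: in case (a), when the second Jones coefficient is $\pm 1$, the diagram $D$ is a Burdened diagram of type $1$; in case (b), when the second Jones coefficient is $\pm 2$, the diagram $D$ is either a Burdened diagram of type $2$ or an Oddly Burdened diagram of type $2$.

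Next I would apply Theorem \ref{main result for Bur type 1} to $D$ in case (a) and Theorem \ref{main result for Bur and Odd Bur type 2} to $D$ in case (b). In each case, the theorem delivers exactly the claimed bound on $\max \deg V_L$ in terms of $\min \deg V_L$, the number of link components $n$, and the leading coefficient of $\nabla_L$. Since these quantities depend only on $L$ and not on the choice of diagram, this completes the deduction.

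The main obstacle for this particular theorem is genuinely modest, since the heavy lifting is done by the earlier results; the real difficulties lie in proving Theorems \ref{main result for Bur type 1} and \ref{main result for Bur and Odd Bur type 2}, which require a careful structural analysis of Burdened diagrams to sharpen the standard bound (\ref{standard bound}). The only subtlety to verify in the present deduction is that reducing a positive diagram to remove nugatory crossings yields a positive diagram of the same link with identical Jones and Conway polynomials, and that the reduced diagram has no pair of $A$-circles sharing a single crossing whose corresponding edge fails to lie on a cycle (which is precisely what reducedness guarantees, as noted in the proof of Corollary \ref{pos links must have bur diagrams}).
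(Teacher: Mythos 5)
Your proposal is correct and follows exactly the route the paper takes: the paper's own proof is a one-line deduction from Corollary \ref{pos links must have bur diagrams} together with Theorems \ref{main result for Bur type 1} and \ref{main result for Bur and Odd Bur type 2}. Your added remarks about passing to a reduced positive diagram and the diagram-independence of both sides of the inequality simply make explicit the details the paper leaves implicit.
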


\begin{proof}
    This follows directly from Corollary \ref{pos links must have bur diagrams} and Theorems \ref{main result for Bur type 1} and \ref{main result for Bur and Odd Bur type 2}. 
\end{proof}

This can be used as a positivity obstruction. In Section \ref{Application section}, we present infinite families of knots which can be shown not to be positive using these theorems.

\section{Clasping Positive Diagrams}

In this section, we explore some properties of positive diagrams as we prepare to generalize the result of our last paper: 

\begin{theorem}\label{B=n for Balanced type 0}\cite{Buchanan_2022}
Let $D$ be a Balanced diagram of type $0$ with $n$ link components. Then 
$$B_D = n.$$
(The number of $B$-circles is equal to the number of link components. This justifies our use of the word Balanced.)
\end{theorem}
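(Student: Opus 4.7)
The plan is to induct on the number of Seifert circles $A_D$. The base case $A_D=1$ is immediate: then $D$ has no crossings, so $B_D=1=n$. For the inductive step with $A_D\geq 2$, use that the reduced $A$-state graph $G_D'$ is a tree on $A_D$ vertices, and so has at least one leaf $v$ with unique neighbor $w$. By Definition~\ref{Balanced type 0 def}, $v$ and $w$ share exactly $2$ crossings, and because $v$ is a leaf these are the only crossings incident to $v$. The two crossings together with an arc of $v$ and an arc of $w$ bound a bigon face in the $A$-state.

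I would first establish that $n(L)=A_D$. The Seifert surface produced by Seifert's algorithm is built from $A_D$ disks joined by $2(A_D-1)$ bands ($2$ bands per edge of $G_D'$, since each pair of adjacent $A$-circles shares exactly $2$ crossings). Because $G_D'$ is a tree, this surface is topologically a plumbing of annuli along a tree, which is a genus-$0$ surface with exactly $A_D$ boundary components. Hence each Seifert circle bounds a single link component, so $n(L)=A_D$; in particular the leaf $v$ corresponds to its own distinct link component.

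Let $D'$ be the diagram obtained from $D$ by removing the link component $v$ (equivalently, deleting the $v$-arc together with its $2$ crossings). Then $D'$ is again a Balanced diagram of type $0$, with reduced $A$-state graph $G_{D'}'=G_D'-v$, which is still a tree. The inductive hypothesis gives $B_{D'}=n(L')=n(L)-1$. The heart of the proof is to show $B_D=B_{D'}+1$. In the all-$B$ smoothing of $D$, the two crossings of the $vw$ bigon are $B$-smoothed, reconnecting the four local arcs (two from $v$, two from $w$) into two new closed curves: an \emph{inner} curve formed from the two bigon-interior arcs (one from $v$ and one from $w$) and an \emph{outer} curve formed from the two bigon-exterior arcs. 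Since $v$ is a leaf, both arcs of $v$ carry no crossings besides the two at the bigon; the inner curve is therefore isolated and contributes exactly one $B$-circle. The outer curve contains $w$'s exterior arc, which carries all of $w$'s remaining crossings with the rest of $D$, and the exterior arc of $v$, which contributes nothing further. Topologically, this outer curve plays the role of $w$ in $D'$, so completing the remaining $B$-smoothings on the rest of the diagram contributes exactly $B_{D'}$ circles. Combining, $B_D = 1 + B_{D'} = 1 + (n-1) = n$, closing the induction.

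The main obstacle is the local bigon analysis: one must verify that the two $B$-smoothings split the four local arcs into the claimed inner/outer pair (rather than reconnecting them into a single closed curve), and that after reconnection the outer arc is indistinguishable from $w$ under the remaining $B$-smoothings. This reduces to checking that the planar bigon has the ``parallel'' reconnection pattern, which follows from both crossings being positive with consistent strand orientations, forcing the two $B$-smoothings to pair the interior $v$-arc with the interior $w$-arc and the exterior $v$-arc with the exterior $w$-arc.
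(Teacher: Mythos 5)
The paper itself imports this statement from \cite{Buchanan_2022} without reproving it, so I can only judge your argument on its own terms. Your skeleton (induct on the tree, peel off a leaf $A$-circle $v$, analyze the two $B$-smoothings at the clasp between $v$ and its neighbour $w$) is reasonable, and your derivation of $n(L)=A_D$ from planarity of the tree-of-annuli Seifert surface is correct and is exactly what lets the induction close. But there is a genuine gap in the step $B_D=B_{D'}+1$. You argue that the ``inner curve'' produced by $B$-smoothing the two clasp crossings is an isolated $B$-circle because the two arcs of $v$ carry no further crossings. That is not enough: the inner curve also contains an arc of $w$, namely the arc of $w$ that cobounds the clasp bigon with $v$, and nothing in Definition \ref{Balanced type 0 def} prevents other $A$-circles from being nested inside that bigon and clasped onto $w$ along precisely that arc (take a diagram as in Figure \ref{Balanced type 0 example} and isotope one leaf circle into the bigon of another; the result is still a Balanced diagram of type $0$). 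In that situation the ``inner curve'' runs through further crossings and is not a single closed circle, so your accounting, as written, does not yield $B_D=B_{D'}+1$. The obstacle you flag at the end --- parallel versus crossed reconnection --- is real but is not the problematic one; positivity does settle that. The repair is to choose an \emph{innermost} leaf, one whose clasp bigon contains no other part of the diagram (such a leaf exists by an innermost-disk argument on the finitely many bigon regions), and run the local analysis there.

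A secondary point: ``removing the link component $v$'' conflates the $A$-circle $v$ with a component of $L$. In a positive diagram each strand passes from one Seifert circle to another at every crossing, so the two arcs of the leaf $A$-circle $v$ generally lie on two \emph{different} link components; this already happens for the Hopf link. The operation you actually need is a diagram move --- delete the two clasp crossings and the two arcs of $v$, and rejoin $w$ by its Seifert smoothings --- after which $D'$ is again Balanced of type $0$ with $A_{D'}=A_D-1$, and your formula $n=A_D$ applied to $D'$ gives $n(D')=n(D)-1$ without ever identifying which component was ``removed.'' With these two repairs the induction goes through.
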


Whenever we refer to \say{an arc} of a diagram, we mean a portion of a strand that goes between two crossings, so an arc ends when it reaches any crossing, not just an undercrossing.

\begin{proposition}\label{even cycles}
    Every cycle in the reduced $A$-state graph of a positive diagram is made of an even number of edges. 
\end{proposition}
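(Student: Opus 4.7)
The plan is to prove the cycle length $k$ is even by combining two independent parity arguments, one using the in-plane orientation of each Seifert circle and the other using the nesting depth.

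Fix a cycle $C_1\to C_2\to\cdots\to C_k\to C_1$ in the reduced $A$-state graph of a positive diagram, where each $C_i$ is an $A$-circle (equivalently a Seifert circle). Each Seifert circle is a simple closed curve in the plane, and the link orientation induces a definite rotational orientation on it, either $\mathrm{CW}$ or $\mathrm{CCW}$. Any two disjoint Seifert circles also have a definite \emph{nesting status}: either one bounded disk lies in the other (\emph{nested}), or neither does (\emph{sibling}). Label each of the $k$ edges of the cycle accordingly.

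The first parity constraint comes from a local analysis at a positive crossing. Choose a local frame in which both strands of a positive crossing point upward; after the $A$-smoothing (which coincides with the Seifert smoothing here), the two resulting Seifert arcs both point upward as well. Reading off which side of each arc contains its bounded disk shows: in the sibling configuration the two disks sit on opposite sides of the crossing (one west, one east), forcing one circle to be CCW and the other CW; in each of the two nested configurations, both disks sit on the same side, forcing the two circles to carry the same orientation. Consequently, along the cycle the CW/CCW label agrees across each nested edge and disagrees across each sibling edge. Since traversing the cycle must return the label of $C_1$ to itself, the number of sibling edges is even.

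The second parity constraint comes from nesting depth. Let $d(C)$ be the number of Seifert circles strictly containing $C$. For a sibling edge, $d(C_i)=d(C_{i+1})$: any Seifert circle $C_\ast$ containing just one of the two would have to separate their two arcs at the common crossing, but those arcs are adjacent across a single crossing gap in one face of the $A$-state, which is impossible. For a nested edge, the same reasoning rules out any Seifert circle strictly between $C_i$ and $C_{i+1}$, so $d$ changes by exactly $+1$ or $-1$. Summing $d(C_{i+1})-d(C_i)$ around the cycle gives $0$, so the $+1$ and $-1$ contributions balance, and the number of nested edges is even.

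Combining, $k$ equals the number of sibling edges plus the number of nested edges, both even, so $k$ is even. The principal obstacle is the first step: verifying in each of the three configurations that requiring both Seifert arcs to point in the common upward direction of the positive crossing really does force the stated CW/CCW relationship, by tracking on which side of each arc the corresponding Seifert disk lies.
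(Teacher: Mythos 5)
Your proof is correct, and it is in fact more complete than the argument given in the paper. Both start from the same local observation at a positive crossing: after the $A$- (= Seifert-) smoothing the two arcs are parallel, so two $A$-circles sharing a crossing have opposite rotational orientations when neither contains the other and the same orientation when one is nested inside the other. The paper then tracks only the CW/CCW label around the cycle. That shows the number of orientation-reversing (sibling) edges is even, and the two sample sequences it writes down really only treat configurations in which the remaining entries alternate; a cycle mixing nested and sibling edges in an arbitrary pattern is not directly covered, because the orientation label alone cannot detect the parity of the number of nested edges. Your second invariant, the nesting depth $d$, fills exactly that gap: $d$ is constant across sibling edges and changes by exactly $\pm 1$ across nested edges, both facts resting on the observation that no other Seifert circle can pass through the small disk where the smoothed crossing sits, so no circle can separate the two adjacent arcs. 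The telescoping sum of $d$ around the cycle then forces the number of nested edges to be even, and the two parities together give that the cycle length is even in full generality. The one step worth writing out explicitly is the one you flag yourself: the case check that the bounded Seifert disks lie on the indicated sides of the two upward-pointing arcs (west/east for siblings, same side for the two nested configurations); it is routine but it is the load-bearing step for both halves of the argument.
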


\begin{proof}
    It suffices to prove that this is true for any diagram $D$ whose reduced $A$-state graph $G$ contains no cut edges, since cut edges will never be part of a cycle. 

    Let $(v_1, v_2)$ be an edge in $G$. Since $G$ contains no cut edges, this edge is contained in some cycle $P=(v_1, v_2, \dots, v_r, v_1)$. We know that there are no duplicate edges in $G$, so $r\geq 3$. 

    In the diagram $D$, $A$-circles $A_1$ and $A_2$ (corresponding to vertices $v_1$ and $v_2$) are connected by a crossing, and $A_r$ and $A_1$ are also connected by a crossing. Either (without loss of generality) $A_2$  and $A_r$ are both drawn inside $A_1$, or none of the circles are drawn inside each other. 
    If two circles share a crossing and neither is contained in the other, then one has a clockwise orientation and the other has a counterclockwise orientation. If two circles share a crossing and one is contained in the other, then they have the same orientation. 
    
    Assume $A_1$ has counter-clockwise orientation. We can note this with a \say{$+$} and form a sequence that indicates the orientations of each circle: $$(A_1, A_2, A_3,  \dots, A_r, A_1) : (+, -, +, \dots, -, +)$$ if no circle contains the other, or $$(A_1, A_2, A_3, \dots, A_r, A_1): (+ , + , - , \dots, +, +)$$ if the circles are contained in $A_1$. In either case, notice that this sequence must have $r$ even for the rest of the entries to alternate between $+$ and $-$.  
\end{proof}

Because of this fact, the reduced $A$-state graph of any positive diagram of any positive link with second Jones coefficient $\neq 0$ must contain a cycle of length $\geq 4$. Meaning, the smallest possible Balanced diagram of type $1$ has at least four crossings. 

\input{Figures/clasp_example_1}

In Figure \ref{fig:clasp example 1} we see a Balanced diagram of type $1$ that we are able to transform into a Balanced diagram of type $0$ by adding a clasp. Let $A_1, A_2, A_3, A_4$ be the $A$-circles of this diagram, with corresponding vertices $v_1, v_2, v_3, v_4$ forming a cycle in the $A$-state graph. We take arcs in the diagram that belong to some $A$-circles $A_1$ and $A_3$, and interlock them. What happened in the $A$-state graph? All edges of the form $(v_i, v_3)$ are now of the form $(v_i, v_1)$, and we have added two copies of the new edge $(v_1, v_3)$. This clasping did not change the number of $A$-circles in $D$ (or the number of vertices of $G$), but it did change how the circles (and their corresponding edges in $G$) are arranged relative to one another. This is shown in a little more generality in Figure \ref{fig:clasp move 0}, which demonstrates how we can view this as one $A$-circle swallowing the other. This kind of clasping clearly does not change the number of link components. But more interestingly, we can see in Figure \ref{fig:clasp move 1} that this does not change the number of $B$-circles either. Hopefully, by repeatedly forming these clasps, we can kill a hole in a reduced $A$-state graph and preserve the number of link components and the number of $B$-circles as we do so. 

Since the $A$-state graph will tell us information to help us classify a diagram as Balanced or not, we would like to be able to think about performing \textit{clasp moves} directly from the information contained in the graph, and not have to have a diagram to start with.

\subsection{The Clasp Move}

Let $D$ be a reduced positive link diagram with the following properties: Every cut edge in the reduced $A$-state graph of $D$ corresponds to exactly two crossings in $D$, and every cycle edge in the reduced $A$-state graph corresponds to exactly one crossing in $D.$

Suppose there is a path $(v_1, v_2, v_3)$ in the reduced $A$-state graph that is part of a cycle, and that $v_2$ has degree $2$. Then there are arcs $a_1$ and $a_3$ in the diagram $D$ (that are part of the $A$-circles corresponding to vertices $v_1$ and $v_3$) that can be clasped together with positive crossings. We saw this in Figure \ref{fig:clasp example 1}.

Now, suppose that: (1) $(v_1, v_2, v_3)$ is part of a cycle in the reduced $A$-state graph, (2) Cutting edges $(v_1, v_2)$ and $(v_2, v_3)$ disconnects the graph, and (3) this disconnects it so that the component containing $v_2$ is a tree. So, this means that $v_2$ may not have degree $2$ in the graph, but from the perspective of any vertex in the graph that is not part of that tree rooted at $v_2$, $v_2$ might as well have degree $2$ -- every path from $v_2$ to any other vertex (that is not part of the tree) must use edge $(v_1, v_2)$ or $(v_2, v_3)$. Then, as before, we can clasp together arcs in the link diagram corresponding to $v_1$ and $v_3$. This is shown in Figure \ref{fig:v2 cycle degree 2}, and will be called a \textit{clasp move}.

If a reduced positive link diagram $D$ satisfies the criteria listed above for performing a clasp move, then we say that $D$ is \textit{claspable}:

\begin{definition} \label{claspable def}
A link diagram $D$ is \textbf{\underline{claspable}} if it satisfies the following: \begin{enumalph}
    \item $D$ is a reduced positive link diagram 
    \item Every cut edge in the reduced $A$-state graph of $D$ corresponds to exactly two crossings in $D$, and every cycle edge in the reduced $A$-state graph corresponds to exactly one crossing in $D$
    \item The reduced $A$-state graph contains a cycle $(v_1, v_2, v_3, \dots, v_1)$ such that: 
    \begin{enumalph}
        \item Edges $(v_1, v_2)$ and $(v_2, v_3)$ form a cut set in the graph
        \item Cutting those two edges disconnects the graph so that the component containing $v_2$ is a tree
    \end{enumalph}
\end{enumalph}

\end{definition}

\begin{definition} \label{clasp move def}
We perform a \textbf{\underline{clasp move}} on a claspable diagram $D$ (and its associated $A$-state graph $G$) by clasping arcs $a_1$ and $a_3$ together with positive crossings (transferring all edges incident to $v_3$ to be incident to $v_1$, and then adding in two copies of the edge $(v_1, v_3)$). 
\end{definition}

\input{Figures/clasp_move_0}
\input{Figures/clasp_move_1}
\input{Figures/v2_cycle_degree_2}

\begin{proposition} \label{leaves dont matter}
    Let $D$ be a claspable diagram, and let $D'$ be the diagram obtained by performing a clasp move. Let $G$ be the $A$-state graph of $D$, and let $G'$ be that of $D'$. If vertex $v_i$ is incident to only one vertex in $G$, then $v_i$ is still incident to only one vertex in $G'$. 
\end{proposition}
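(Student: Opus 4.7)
The plan is to run a short case analysis on $v_i$ relative to the cycle $(v_1, v_2, v_3, \dots, v_1)$ used to perform the clasp move, and verify that neither the rerouting of edges nor the insertion of the new clasp edges can introduce a second neighbor of $v_i$ in $G'$.

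First I would record the only two ways a clasp move modifies $G$: every edge incident to $v_3$ has its $v_3$-endpoint replaced by $v_1$, and two new edges between $v_1$ and $v_3$ are inserted (which collapse to at most one edge if we work with the reduced graph). In particular, for any vertex $w \notin \{v_1, v_3\}$, no edge incident to $w$ is deleted and no new edge incident to $w$ is created; the only possible change to $w$'s neighborhood is that a neighbor $v_3$ gets relabeled as $v_1$. Next I would dispense with the trivial cases: each of $v_1, v_2, v_3$ lies on a cycle in $G$ by the definition of a claspable diagram, so each has degree at least $2$ in $G$ and cannot be a leaf. Hence a leaf $v_i$ must satisfy $v_i \notin \{v_1, v_2, v_3\}$.

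Finally, for such a leaf $v_i$ with unique neighbor $v_j$ in $G$, I would split on whether $v_j = v_3$. If $v_j \neq v_3$, then by the observation above no edge incident to $v_i$ is touched, so $v_i$'s sole neighbor in $G'$ is still $v_j$. If $v_j = v_3$, then the edge $(v_i, v_3)$ becomes $(v_i, v_1)$ under the rerouting, no other edge is added at $v_i$, and $v_i$'s sole neighbor in $G'$ is $v_1$. In either case $v_i$ remains incident to exactly one vertex. I do not expect any substantive obstacle; the only bookkeeping point is the distinction between the $A$-state graph and its reduced version, but since the clasp move only creates parallel edges between $v_1$ and $v_3$, it changes neither parallelism nor incidence at any other vertex, so the statement is unaffected.
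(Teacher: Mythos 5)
Your proposal is correct and follows essentially the same route as the paper: both arguments rest on the observation that a clasp move only reroutes the $v_3$-endpoints of edges to $v_1$ and adds parallel edges between $v_1$ and $v_3$, together with the fact that a leaf cannot be one of the cycle vertices. The paper phrases this as a proof by contradiction while you give a direct case split on whether the leaf's unique neighbor is $v_3$, and you are slightly more explicit in ruling out $v_i \in \{v_1, v_3\}$, but the substance is identical.
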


\begin{proof}

    Suppose for contradiction that $v_i$ is incident to more than one vertex in $G'$. 
      The clasp move takes all edges in the $A$-state graph $G$ of the form $(v_3, v_j)$ and makes them edges of the form $(v_1, v_j)$ in $G'$, and all other edges remain exactly the same. 

      So the only way a vertex could have a different set of neighbors in $G'$ than in $G$ is if the vertex itself were $v_3$, or if one of its neighbors were $v_3$. Since by assumption $v_3$ is part of a cycle in the reduced $A$-state graph and $v_i$ is not, it must be the case that $v_i$ was incident to $v_3$ in $G$. But then now $v_i$ is only incident to $v_1$ in $G'$. 
\end{proof}

\begin{lemma}\label{clasp moves preserve B and n}
    Clasp moves do not change the number of link components or the number of $B$-circles in a diagram: If $D'$ is obtained by performing a clasp move on $D$, then $B_{D'} = B_D$ and $n(D')=n(D)$. 
\end{lemma}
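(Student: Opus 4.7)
The lemma is proved by a local analysis of the clasp region, with separate arguments for each of the two invariants.

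For $n(D')=n(D)$, the plan is to trace link components through the clasp. The clasp move is a local modification that inserts two positive crossings between the arcs $a_1$ and $a_3$ without altering $D$ outside a small disk; a strand entering the clasp region along $a_1$ passes through both new crossings and exits at the same far endpoint of $a_1$, and likewise for $a_3$. No previously disjoint strands become joined and no single strand is split, so $n(D')=n(D)$.

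For $B_{D'}=B_D$, label the four boundary endpoints of the clasp region by $aul,aur$ (endpoints of $a_1$) and $adl,adr$ (endpoints of $a_3$). In $D$, these endpoints are joined inside the region by the two $B$-state arcs $a_1$ (from $aul$ to $aur$) and $a_3$ (from $adl$ to $adr$). The central local claim is that in $D'$, $B$-smoothing the two new positive crossings also produces exactly two arcs in the clasp region with the same boundary connectivity---a ``top'' arc from $aul$ to $aur$, formed from the top halves of the two $B$-smoothings together with the top middle segment of the clasp, and an analogous ``bottom'' arc from $adl$ to $adr$---and no additional closed loop inside the clasp. I would verify this by direct inspection of the $B$-smoothings at each of the two new crossings; Figure~\ref{fig:clasp move 1} carries out the analysis in the essentially distinct configurations, which differ by whether $a_1,a_3$ lie on the same or on different $B$-circles of $D$ and by how those $B$-circles route themselves externally around the clasp.

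Given this local claim, $B_{D'}=B_D$ follows at once. Outside the clasp region $D$ and $D'$ are identical, and inside the region the new $B$-state arcs have the same boundary pairings as the old ones, so each $B$-circle of $D$ extends uniquely to a $B$-circle of $D'$ by replacing its passage through $a_1$ or $a_3$ with the corresponding new top or bottom path; this assignment is a bijection on $B$-circles. The main obstacle is the local claim itself---especially the ruling out of a spurious closed $B$-circle between the two new crossings. This step uses both the positivity of the new crossings and the orientations of $a_1$ and $a_3$ at the clasp: the two positive $B$-smoothings pair the top and bottom quadrants, absorbing the top and bottom middle segments of the clasp into the two main paths rather than sealing them off into an inner loop.
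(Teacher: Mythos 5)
Your proposal is correct and follows essentially the same route as the paper, which simply points to Figures \ref{fig:clasp move 0} and \ref{fig:clasp move 1}: the component count is unchanged because the clasp is a local insertion that neither merges nor splits strands, and the $B$-circle count is unchanged because the $B$-smoothings of the two new positive crossings reproduce the original boundary connectivity of the clasp region (top arc and bottom arc) with no extra closed loop. Your explicit statement of the local claim and the resulting bijection of $B$-circles is a slightly more uniform packaging of the paper's case-by-case figure, but the underlying argument is the same.
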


\begin{proof}
Obviously, clasp moves do not change the number of link components in a diagram, and in Figure \ref{fig:clasp move 0} we saw that a clasp moves preserve the number of $A$-circles. Now in Figure \ref{fig:clasp move 1}, we see that clasp moves also preserve the number of $B$-circles, regardless of if the arcs belonged to the same $B$-circle or to different $B$-circles in the original diagram $D$.
\end{proof}

\newpage

\subsection{Clasp Moves on a Balanced diagram of type $\mathbf{1}$}

\begin{proposition}
    Every $k$-Balanced diagram $D$ is claspable. 
\end{proposition}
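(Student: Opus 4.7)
The plan is to verify the three defining conditions (a), (b), (c) of Definition \ref{claspable def} directly from the structural constraints imposed on a $k$-Balanced diagram by Definition \ref{Balanced type 1 def}.

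For condition (a), I need that $D$ is a reduced positive link diagram. Positivity and non-splitness come for free from the definition of Balanced. For reducedness, I would argue that any nugatory crossing in a positive diagram would correspond to a cut edge of the reduced $A$-state graph $G_D'$ representing exactly one crossing; but clauses (2b) and (2c) of Definition \ref{Balanced type 1 def} ensure that every single-crossing edge lies on a cycle while every non-cycle edge carries exactly two crossings, which rules out such a configuration. Condition (b) is then immediate from the same two clauses: cycle edges represent exactly one crossing and cut edges represent exactly two.

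The main content lies in condition (c). The key structural observation is that $G_D'$ is \emph{unicyclic}: since $G_D'$ is a connected planar graph with exactly one interior face, Euler's formula forces $|E| = |V|$, so there is a unique cycle $C$ and every remaining edge is a bridge, assembling into pendant trees attached to vertices of $C$. By Proposition \ref{even cycles}, $|C|$ is even, and since $G_D'$ has no multi-edges this forces $|C| \geq 4$. I would then choose any three consecutive vertices $v_1, v_2, v_3$ along $C$ (pairwise distinct, since $|C|\geq 4$) and check that deleting $(v_1,v_2)$ and $(v_2,v_3)$ severs the one cycle of $G_D'$. The resulting graph is a forest with two components: one consisting of $v_2$ together with the pendant tree attached at $v_2$ (which is itself a tree), and another consisting of the path $v_3,\dots,v_1$ along the remainder of $C$ together with the pendant trees at the other cycle vertices.

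I do not expect a serious obstacle here: each step is a direct unpacking of the Balanced-type-$1$ definition combined with Proposition \ref{even cycles}, and the main conceptual content is just the equivalence of ``exactly one hole'' with ``unicyclic'' for a connected planar graph, which immediately supplies the needed cut-pair and the tree component at $v_2$.
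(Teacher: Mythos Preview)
Your proposal is correct and is precisely the detailed unpacking that the paper leaves implicit: the paper's own proof consists of the single sentence ``This follows immediately from Definition~\ref{Balanced type 1 def}.'' Your verification of conditions (a)--(c) via the unicyclic structure of $G_D'$, Proposition~\ref{even cycles}, and the clause-by-clause reading of Definition~\ref{Balanced type 1 def} is exactly the content underlying that one line.
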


\begin{proof}

    This follows immediately from Definition \ref{Balanced type 1 def}.
\end{proof}

\begin{proposition}\label{k-Bal are forever claspable}
If $k=4$, then performing a clasp move on a $4$-Balanced diagram results in a Balanced diagram of type $0$. 
   If $k\geq 6$, then performing a clasp move on a $k$-Balanced diagram results in a $(k-2)$-Balanced diagram. 
\end{proposition}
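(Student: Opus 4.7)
The plan is to track directly how the clasp move alters the reduced $A$-state graph of a $k$-Balanced diagram. Let $G$ be the reduced $A$-state graph of $D$ with unique $k$-cycle $C = (v_1, v_2, \dots, v_k, v_1)$, and suppose the clasp move is performed at a subpath $(v_1, v_2, v_3)$ of $C$. By Definition~\ref{clasp move def}, the move transfers every edge incident to $v_3$ so that it is incident to $v_1$, and then adds two copies of the edge $(v_1, v_3)$.

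The first step is to classify each transferred edge, paying attention to which ones become duplicates in the full $A$-state graph. The cycle edge $(v_2, v_3)$ is transferred to $(v_1, v_2)$ and coincides with the existing cycle edge $(v_1, v_2)$, so in the resulting diagram $D'$ the pair $v_1, v_2$ shares two crossings and the corresponding edge of the reduced graph of $D'$ is a cut edge. Any cut edge $(v_3, w)$ arising from a tree branch at $v_3$ (carrying two crossings) is transferred to a cut edge $(v_1, w)$ still carrying two crossings. The two new copies of $(v_1, v_3)$ produce a cut edge with two crossings. The subtle case is the transferred cycle edge $(v_3, v_4) \mapsto (v_1, v_4)$: whether it becomes a duplicate depends on whether $v_1$ and $v_4$ were already adjacent in $G$.

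To resolve this, I would argue that in a $k$-Balanced diagram, two vertices on $C$ are adjacent in $G$ only if they are consecutive on $C$, since any extra chord would create a second independent cycle and violate the one-hole condition. For $k = 4$, the vertices $v_1$ and $v_4$ are consecutive on $C$, so the transferred edge is absorbed as a duplicate of the existing $(v_1, v_4)$, giving two crossings; the reduced graph of $D'$ then has no cycle at all. For $k \geq 6$, $v_1$ and $v_4$ are not consecutive, so the transferred edge is genuinely new with one crossing, and together with the unchanged edges along $v_4, v_5, \dots, v_k, v_1$ it forms a single new cycle $C'$ of length $k-2$; no other cycles appear, since edges outside $v_3$'s star are unchanged.

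Finally, I would verify the defining conditions. For $k=4$, the reduced graph of $D'$ is a tree and the only pairs of $A$-circles sharing crossings share exactly two, confirming Balanced of type $0$ by Definition~\ref{Balanced type 0 def}. For $k\geq 6$, the reduced graph has exactly one hole bounded by $C'$; the cycle-edges of $C'$ (the unchanged $(v_4, v_5), \dots, (v_k, v_1)$ together with the new $(v_1, v_4)$) each carry one crossing, and every remaining shared-crossing pair carries two crossings as a cut edge, so Definition~\ref{Balanced type 1 def} holds. The main obstacle throughout is the bookkeeping around transferred edges that collapse into existing edges: tracking this absorption correctly is what both reduces the cycle length by $2$ and ensures conditions (b) and (c) are not simultaneously violated on the new cycle edge $(v_1, v_4)$.
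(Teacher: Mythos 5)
Your proposal is correct and follows essentially the same route as the paper: the paper's proof reduces to the leafless case via Proposition~\ref{leaves dont matter} and then exhibits the effect of the clasp move on the reduced $A$-state graph pictorially (Figures~\ref{fig:clasp example 1} and~\ref{fig:clasping k-balanced}), which is exactly the edge-transfer bookkeeping you carry out in prose. Your explicit treatment of the absorbed duplicates --- the transferred $(v_2,v_3)$ doubling $(v_1,v_2)$, and the case split on whether $v_1$ and $v_4$ are consecutive (justified by the no-chord observation) --- is a welcome sharpening of what the paper leaves to the figures, but it is the same argument.
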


\begin{proof}
We know that $D$ is claspable. And, we note we can perform a clasp at any $(v_1, v_2, v_3)$ path segment of the cycle in this reduced $A$-state graph. Since Proposition \ref{leaves dont matter} tells us that a leaf in the reduced $A$-state graph of $D$ will correspond to a leaf in the reduced $A$-state graph of $D'$, it suffices to prove our claim for the case where the reduced $A$-state graph contains no leaves, and therefore contains only cycle edges. 
We already saw the case of $k=4$ demonstrated in Figure \ref{fig:clasp example 1}. 
    The proof of the second statement is demonstrated in Figure \ref{fig:clasping k-balanced}, where the dashed edge represents either one edge, or a path of three new edges (adding two vertices in the process, so that all vertices in the dashed path have degree $2$). We perform the clasp move, and obtain a $(k-2)$-Balanced diagram.  

\end{proof}

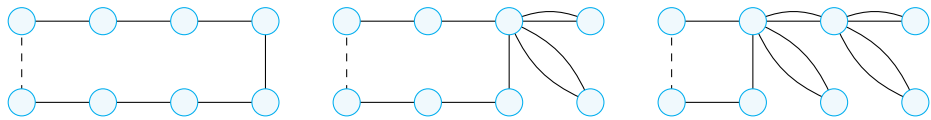
\begin{figure}[h]
    \centering
  \begin{tikzpicture}
      \begin{scope}[scale=0.9, >=Stealth]
       
  \begin{scope}[scale = 0.6, xshift=-8cm]
  
       \node (a) at (-3,1){};
       \node (b) at (-1,1){};
       \node (c) at (1,1) {};
       \node (d) at (3,1){};
       \node (e) at (3, -1) {};
       \node (f) at (1, -1) {};
       \node (g) at (-1, -1) {};
       \node (h) at (-3, -1) {};
        \draw[]
        (a) -- (b)
        (b) -- (c) 
        (c) -- (d)
        (d) -- (e)
        (e) -- (f)
        (f) -- (g)
        (g) -- (h);
        \draw[dashed]
        (h) -- (a);
        
        \filldraw[color=cyan, fill=cyan!5] 
        (a) circle (1/3)
        (b) circle (1/3)
        (c) circle (1/3)
        (d) circle (1/3)       
        (e) circle (1/3)
        (f) circle (1/3)
        (g) circle (1/3)
        (h) circle (1/3); 
        
       \end{scope}

\begin{scope}[scale = 0.6, xshift=0cm]
  
       \node (a) at (-3,1){};
       \node (b) at (-1,1){};
       \node (c) at (1,1) {};
       \node (d) at (3,1){};
       \node (e) at (3, -1) {};
       \node (f) at (1, -1) {};
       \node (g) at (-1, -1) {};
       \node (h) at (-3, -1) {};
        \draw[]
        (c) -- (f)
        (a) -- (b)
        (b) -- (c) 
        (c) -- (d)
        (f) -- (g)
        (g) -- (h);
        \draw[dashed]
        (h) -- (a);
        \draw[out=20, in=160, relative]
        (c) to (d)
        (e) to (c)
        (c) to (e);
        
        \filldraw[color=cyan, fill=cyan!5] 
        (a) circle (1/3)
        (b) circle (1/3)
        (c) circle (1/3)
        (d) circle (1/3)       
        (e) circle (1/3)
        (f) circle (1/3)
        (g) circle (1/3)
        (h) circle (1/3); 
        
       \end{scope}

\begin{scope}[scale = 0.6, xshift=8cm]
  
       \node (a) at (-3,1){};
       \node (b) at (-1,1){};
       \node (c) at (1,1) {};
       \node (d) at (3,1){};
       \node (e) at (3, -1) {};
       \node (f) at (1, -1) {};
       \node (g) at (-1, -1) {};
       \node (h) at (-3, -1) {};
        \draw[]
        (b) -- (g)
        (a) -- (b)
        (b) -- (c) 
        (c) -- (d)
        (g) -- (h);
        \draw[dashed]
        (h) -- (a);
        \draw[out=20, in=160, relative]
        (b) to (c)
        (c) to (d)
        (e) to (c)
        (c) to (e)
        (f) to (b)
        (b) to (f);
        
        \filldraw[color=cyan, fill=cyan!5] 
        (a) circle (1/3)
        (b) circle (1/3)
        (c) circle (1/3)
        (d) circle (1/3)       
        (e) circle (1/3)
        (f) circle (1/3)
        (g) circle (1/3)
        (h) circle (1/3); 
        
       \end{scope}
       
       \end{scope}
       
  \end{tikzpicture}
  \vspace{-8pt}
    \caption{Performing successive clasp moves on a $k$-Balanced diagram}
    \label{fig:clasping k-balanced}
\end{figure}

\begin{theorem}\label{Bal type 1 B=n}
    Let $D$ be a Balanced diagram of type $1$. Then the number of $B$-circles in $D$ is equal to the number of link components: 
    $B_D = n.$
\end{theorem}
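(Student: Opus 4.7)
The plan is to reduce the type $1$ case to the type $0$ case (Theorem \ref{B=n for Balanced type 0}) via a finite sequence of clasp moves, using Proposition \ref{k-Bal are forever claspable} to do the reduction and Lemma \ref{clasp moves preserve B and n} to keep both $B_D$ and $n$ unchanged.

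More concretely: let $D$ be a Balanced diagram of type $1$. By Proposition \ref{even cycles}, the unique hole in its reduced $A$-state graph is bounded by an even number of cycle edges, so $D$ is a $k$-Balanced diagram for some even $k\geq 4$. The plan is to induct on $k$. For the base case $k=4$, Proposition \ref{k-Bal are forever claspable} gives a clasp move producing a Balanced diagram $D'$ of type $0$; Lemma \ref{clasp moves preserve B and n} ensures $B_D = B_{D'}$ and $n(D)=n(D')$, and then Theorem \ref{B=n for Balanced type 0} gives $B_{D'}=n(D')$, hence $B_D=n$. For the inductive step $k\geq 6$, the same proposition yields a clasp move producing a $(k-2)$-Balanced diagram $D'$, and again $B_D=B_{D'}$ and $n(D)=n(D')$; the inductive hypothesis then gives $B_{D'}=n(D')$.

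I do not anticipate a genuine obstacle here, since the machinery of Section $3$ was designed precisely for this reduction. The one thing worth verifying explicitly is that after a clasp move on a $k$-Balanced diagram with $k\geq 6$ the resulting diagram is indeed Balanced of type $1$ (and $(k-2)$-Balanced) rather than merely Burdened: this is exactly the content of Proposition \ref{k-Bal are forever claspable}, which is already in hand. So the proof reduces to writing down the induction on $k$ and citing the three results above.
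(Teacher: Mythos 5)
Your proposal is correct and follows essentially the same route as the paper: induction on $k$ using Proposition \ref{k-Bal are forever claspable} to reduce to a Balanced diagram of type $0$, with Lemma \ref{clasp moves preserve B and n} preserving $B$ and $n$ at each step and Theorem \ref{B=n for Balanced type 0} closing the argument. The only (harmless) addition is your explicit appeal to Proposition \ref{even cycles} to justify that $k$ is even and at least $4$, which the paper leaves implicit.
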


\begin{proof}
    If $D$ is a Balanced type $1$ diagram, then it is $k$-Balanced for some even integer $k$. By induction on $k$, using the result of Proposition \ref{k-Bal are forever claspable}, we have that any $k$-Balanced diagram $D$ can be turned into a Balanced diagram $D'$ of type $0$ by performing $\frac{k-2}{2}$ clasp moves. 

    Then 
    \begin{align*}
        B_{D} &= B_{D'} \text{ by Lemma \ref{clasp moves preserve B and n}},\\
        & = n(D') \text{ by Theorem \ref{B=n for Balanced type 0}, since $D'$ is Balanced type $0$},\\
        & = n(D) \text{ by Lemma \ref{clasp moves preserve B and n}}.
    \end{align*}

\end{proof}

\subsection{Clasp Moves on a Balanced diagram of type $\mathbf{2}$}

The general idea will be that if $D$ is a claspable diagram that can be transformed (via clasp moves) into a diagram for which we know that $B=n$, then we have that $B=n$ in $D$ also. 

\begin{theorem} \label{Bal type 2 B=n}
    Let $D$ be a Balanced diagram of type $2$. Then $B_D = n$. 
\end{theorem}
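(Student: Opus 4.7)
The plan is to extend the clasp-move machinery of Section 4 from the type $1$ setting to type $2$: I will show that any Balanced diagram $D$ of type $2$ reduces, via a sequence of clasp moves, to a Balanced diagram $D'$ of type $1$. Then Theorem \ref{Bal type 1 B=n} gives $B_{D'} = n(D')$, and Lemma \ref{clasp moves preserve B and n} gives $B_D = B_{D'}$ and $n(D) = n(D')$, completing the proof.

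By Proposition \ref{leaves dont matter} it suffices to consider the case in which the reduced $A$-state graph $G_D'$ has no cut edges. Writing $D$ as a $(k_1, k_2)$-Balanced diagram of type $2$ with $k_1 \leq k_2$, Proposition \ref{even cycles} forces $k_1, k_2 \geq 4$ (both even). The first step is to find a length-three path $(v_1, v_2, v_3)$ on the boundary of hole $1$ such that $v_2$ is not on the boundary of hole $2$; the Balanced type $2$ conditions (Definition \ref{Balanced type 2 def}(2)) together with planarity guarantee such a $v_2$ exists, since otherwise the two face boundaries would have to coincide or contain one another in a way that violates the pair conditions on $A$-circles. Once such a $v_2$ is chosen, Definition \ref{Balanced type 2 def}(2) and the no-cut-edge reduction make $D$ claspable along this path in the sense of Definition \ref{claspable def}: removing the two hole-$1$ edges at $v_2$ detaches $v_2$ from every remaining cycle.

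The second step is to analyze the effect of the clasp move, following the template of Proposition \ref{k-Bal are forever claspable}. The edges $(v_1, v_2)$ and $(v_2, v_3)$ are removed from the cycle-edge set as $v_2$ becomes a leaf, and the two duplicate new copies of $(v_1, v_3)$ collapse to a single cut edge in the new reduced $A$-state graph. Hole $2$ is affected only insofar as $v_3$ (and possibly $v_1$) lies on it; a case split — $v_3 \notin$ hole $2$, or $v_3 \in$ hole $2$ but $v_1 \notin$ hole $2$, or both $v_1, v_3 \in$ hole $2$ — shows that either hole $1$ shrinks by two edges while hole $2$ is left intact (possibly up to relabeling), or hole $1$ collapses outright while hole $2$ shortens by two through duplicate-edge collapse. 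In every case exactly two edges leave the cycle-edge set and a single cut edge is introduced, so the parity-of-cycle-edges condition of Definition \ref{Balanced type 2 def}(3) is preserved, and the new diagram is a Balanced diagram of either type $2$ (with strictly smaller $k_1 + k_2$) or type $1$. Inducting on $k_1 + k_2$ then reduces $D$ to a Balanced diagram of type $1$, at which point Theorem \ref{Bal type 1 B=n} finishes the argument.

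The main obstacle is the third sub-case of step two, when both $v_1$ and $v_3$ happen to lie on the boundary of hole $2$ (which is forced, for instance, when cycle $1$ has only one ``private'' vertex). Here the clasp move simultaneously destroys hole $1$ and shortens hole $2$, and one must carefully track the duplicate-edge collapse in the reduced $A$-state graph to confirm that what remains satisfies the Balanced type $1$ hypotheses — in particular, the shared-crossing conditions of Definition \ref{Balanced type 1 def}(2). The planarity-based existence of $v_2$ in step one is a subsidiary but also delicate point: if the two hole boundaries share a long common path, one may need to clasp within the other hole first to expose a suitable $v_2$.
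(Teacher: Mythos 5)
Your strategy is sound and rests on exactly the same machinery as the paper's proof (reduce to a leafless reduced $A$-state graph via Proposition \ref{leaves dont matter}, clasp down to a type $1$ or type $0$ diagram, then invoke Theorem \ref{Bal type 1 B=n} and Lemma \ref{clasp moves preserve B and n}), but you run the reduction in the opposite direction. Writing the two holes as a theta-graph with three internally disjoint paths $P_1,P_2,P_3$ between the branch vertices, the paper inducts on $x=|P_2|$, the number of shared edges, clasping along the \emph{shared} path until $x$ drops to $2$ and then to the $x=0$ configuration; you instead clasp along the \emph{private} path of one hole, shrinking $k_1-x=|P_1|$ until that hole collapses into the $x=0$ configuration of two cycles meeting at a vertex. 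Both orderings terminate, and your accounting of the cycle-edge count (two cycle edges lost per clasp, parity of Definition \ref{Balanced type 2 def}(3) preserved) is correct, so the induction on $k_1+k_2$ goes through.

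The one point you should repair is the existence of the private vertex $v_2$: planarity is not the right reason, and your closing worry about the holes sharing a long common path is unnecessary. The correct argument is parity. Proposition \ref{even cycles} forces $|P_1|+|P_2|$, $|P_2|+|P_3|$, $|P_1|+|P_3|$ all even, so the three path lengths share a parity, and condition (3) of Definition \ref{Balanced type 2 def} forces their sum to be even; hence all three are even and $|P_1|=k_1-x\geq 2$, which guarantees an internal private vertex of $P_1$. This is also exactly the point that fails for Oddly Balanced diagrams (where all three lengths are odd and $|P_1|$ can equal $1$), which is why the paper must treat that case separately and why your argument, as written, correctly does not extend to it.
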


\begin{proof}

    Let $D$ be a $(k_1, k_2)$-Balanced diagram. We claim that $D$ is claspable, and that we can choose to perform clasp moves on $D$ such that the resulting diagram $D'$ is also claspable. In doing so we can transform $D$ into a Balanced diagram of type $1$ or type $0$, which we already know has the property that $B=n$. 

    By definition of Balanced type $2$, $D$ will satisfy the first two criteria of the claspable definition (Definition \ref{claspable def}). As before, by Proposition \ref{leaves dont matter} it suffices to prove the rest of our claims on a diagram $D$ whose reduced $A$-state graph does not contain any leaves. 

    Let $x$ be the number of edges in the reduced $A$-state graph that border both holes. From the definition of Balanced type $2$ (Definition \ref{Balanced type 2 def}), this means we have an even number of edges that are part of cycles, and it follows that $x$ must also be even. We proceed by considering the three cases of $x=0$, $x=2$, and $x \geq 4$. \\

\underline{Case: $x=0$}

    First, suppose $x=0$. Then either the reduced $A$-state graph of $D$ looks like two cycles with one shared vertex $v$, or it looks like two cycles each with with a vertex ($v$ or $w$) such that there is a single path connecting them. In either situation, we cut off the second cycle (and the connecting path, if it exists) and focus on the first cycle of length $k_1$. Let $v=v_1$, and then choose $v_2$ in either direction along the cycle and we will have a cycle-segment $(v_1, v_2, v_3)$ that satisfies the remaining criteria of being claspable. By performing $\frac{k_1-2}{2}$ clasp moves, we transform the diagram into a Balanced diagram of type $0$. We get a Balanced diagram $D'$ of type $1$ when we reattaching the second cycle that we cut off earlier. Meaning, we could have chosen to perform the clasp move at $(v=v_1, v_2, v_3)$ at the beginning, and obtained the very same $D'$ without needing to cut. Since clasp moves do not change the number of the $B$-circles or the number of link components, by our previous case for Balanced type $1$ diagrams (Theorem \ref{Bal type 1 B=n}) we see that $$B_D = B_{D'} = n(D') = n(D).$$

    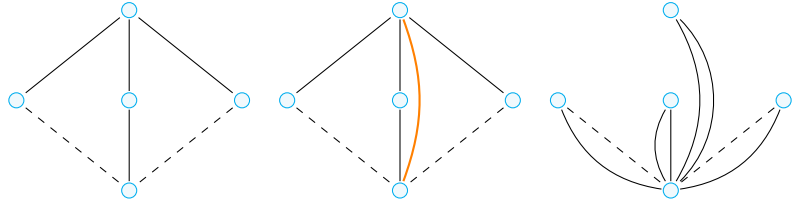
\begin{figure}[h]
    \centering
  \begin{tikzpicture}
      \begin{scope}[scale=0.5, >=Stealth]
       
  \begin{scope}[scale = 0.6, xshift=-12cm]
  
       \node (a) at (-5,0){};
       \node (b) at (0,4){};
       \node (c) at (0,2) {};
       \node (d) at (0,0){};
       \node (e) at (0, -2) {};
       \node (f) at (0, -4) {};
       \node (g) at (5, 0) {};
       \draw[]
        (b) -- (g)
        (a) -- (b)
        (b) -- (d) 
        (d) -- (f);
        \draw[dashed]
        (a) -- (f)
        (g) -- (f);
        
        \filldraw[color=cyan, fill=cyan!5] 
        (a) circle (1/3)
        (b) circle (1/3)
        (d) circle (1/3)       
        (f) circle (1/3)
        (g) circle (1/3); 
        
       \end{scope}

 \begin{scope}[scale = 0.6, xshift=0cm]
  
       \node (a) at (-5,0){};
       \node (b) at (0,4){};
       \node (c) at (0,2) {};
       \node (d) at (0,0){};
       \node (e) at (0, -2) {};
       \node (f) at (0, -4) {};
       \node (g) at (5, 0) {};
        \draw[]
        (b) -- (g)
        (a) -- (b)
        (b) -- (d) 
        (d) -- (f);
        \draw[dashed]
        (a) -- (f)
        (g) -- (f);
        \draw[out=20, in = 160, relative, thick, orange]{}
        (b) to (f);
        
        \filldraw[color=cyan, fill=cyan!5] 
        (a) circle (1/3)
        (b) circle (1/3)
        (d) circle (1/3)       
        (f) circle (1/3)
        (g) circle (1/3); 
        
       \end{scope}

 \begin{scope}[scale = 0.6, xshift=12cm]
  
       \node (a) at (-5,0){};
       \node (b) at (0,4){};
       \node (c) at (0,2) {};
       \node (d) at (0,0){};
       \node (e) at (0, -2) {};
       \node (f) at (0, -4) {};
       \node (g) at (5, 0) {};
    \draw[]
        (d) -- (f);
        \draw[dashed]
        (a) -- (f)
        (g) -- (f);
        \draw[out=30, in = 150, relative]{}
        (b) to (f)
        (f) to (a)
        (f) to (d)
        (g) to (f);
        \draw[out = 45, in =135, relative]{}
        (b) to (f);
        
        \filldraw[color=cyan, fill=cyan!5] 
        (a) circle (1/3)
        (b) circle (1/3)
        (d) circle (1/3)       
        (f) circle (1/3)
        (g) circle (1/3); 
        
       \end{scope}
       
       \end{scope}
       
  \end{tikzpicture}
    \caption{$x=2$: Left is a generalized $A$-state graph of a Balanced type $2$ diagram with $x=2$. The orange in the middle shows the vertices that will be clasped together to produce a diagram whose $A$-state graph looks like the figure on the right}
    \label{fig:clasping Bal type 2 x=2}
\end{figure}

\underline{Case: $x=2$}

    Now that $x> 0$, the restriction of the reduced $A$-state graph having no leaves means that the reduced $A$-state graph and the $A$-state graph are identical. 
    
    In Figure \ref{fig:clasping Bal type 2 x=2}, we see on left the general form that every such $A$-state graph must have, where a dashed edge represents either a single edge or a string of an odd number of edges and even number of vertices, each with degree $2$. 
 In the middle, we have connected $v_1$ and $v_3$ with an orange line, indicating where our clasp will occur. On the right we show the result of clasping.
 
 If each dashed line represents one edge each, the graph on the right corresponds to a Balanced diagram of type $0$. If exactly one of the dashed lines represents a path of at least three edges, then the graph on the right corresponds to a Balanced diagram of type $1$. If both of the dashed lines represent paths of at least three edges, then the graph corresponds to a Balanced diagram of type $2$ in which $x=0$. We have already shown that for any of these possibilities, we have $B=n$. So since clasp moves do not change the number of link components or the number of $B$-circles (Proposition \ref{clasp moves preserve B and n}), it follows that $B=n$ in our original diagram $D$, corresponding to the graph on the left. \\

    
\begin{figure}[h]
    \centering
  \begin{tikzpicture}
      \begin{scope}[scale=0.7, >=Stealth]
       
  \begin{scope}[scale = 0.6, xshift=-8cm]
  
       \node (a) at (-3,0){};
       \node (b) at (0,4){};
       \node (c) at (0,2) {};
       \node (d) at (0,0){};
       \node (e) at (0, -2) {};
       \node (f) at (0, -4) {};
       \node (g) at (3, 0) {};
        \draw[]
        (b) -- (g)
        (a) -- (b)
        (b) -- (c) 
        (c) -- (d)
        (d) -- (e);
         \draw[dashed]
          (e) -- (f)
        (a) -- (f)
        (g) -- (f);
        
        \filldraw[color=cyan, fill=cyan!5] 
        (a) circle (1/3)
        (b) circle (1/3)
        (c) circle (1/3)
        (d) circle (1/3)       
        (e) circle (1/3)
        (f) circle (1/3)
        (g) circle (1/3); 
        
       \end{scope}

 \begin{scope}[scale = 0.6, xshift=0cm]
  
       \node (a) at (-3,0){};
       \node (b) at (0,4){};
       \node (c) at (0,2) {};
       \node (d) at (0,0){};
       \node (e) at (0, -2) {};
       \node (f) at (0, -4) {};
       \node (g) at (3, 0) {};
        \draw[]
        (b) -- (g)
        (a) -- (b)
        (b) -- (c) 
        (c) -- (d)
       (d) -- (e);
        \draw[dashed]
          (e) -- (f)
        (a) -- (f)
        (g) -- (f);
        \draw[out = 20, in = 160, relative, thick, orange]{}
        (b) to (d);
        
        \filldraw[color=cyan, fill=cyan!5] 
        (a) circle (1/3)
        (b) circle (1/3)
        (c) circle (1/3)
        (d) circle (1/3)       
        (e) circle (1/3)
        (f) circle (1/3)
        (g) circle (1/3); 
        
       \end{scope}

 \begin{scope}[scale = 0.6, xshift=8cm]
  
       \node (a) at (-3,0){};
       \node (b) at (0,4){};
       \node (c) at (0,2) {};
       \node (d) at (0,0){};
       \node (e) at (0, -2) {};
       \node (f) at (0, -4) {};
       \node (g) at (3, 0) {};
        \draw[]
        (a) --(d)
        (d)--(g)
        (c) -- (d)
        (d) -- (e);
        \draw[dashed]
          (e) -- (f)
        (a) -- (f)
        (g) -- (f);
        \draw[out=30, in=150, relative]
        (b) to (d)
        (d) to (c);
        \draw[out=45, in=135, relative]
        (b) to (d);
        
        \filldraw[color=cyan, fill=cyan!5] 
        (a) circle (1/3)
        (b) circle (1/3)
        (c) circle (1/3)
        (d) circle (1/3)       
        (e) circle (1/3)
        (f) circle (1/3)
        (g) circle (1/3); 
        
       \end{scope}
       
       \end{scope}
       
  \end{tikzpicture}
    \caption{$x\geq 4$: The orange in the middle shows the vertices that will be clasped together to produce a diagram whose $A$-state graph looks like the figure on the right}
    \label{fig:clasping Bal type 2 x geq 4}
\end{figure}

    \underline{Case: $x \geq 4$}

   We see in Figure \ref{fig:clasping Bal type 2 x geq 4} that we can choose to perform a clasp move on the diagram that will transform it into a Balanced diagram of type $2$, and have $x$ has decreas by $2$ in the process. So, we can perform $\frac{x-2}{2}$ total clasp moves to transform it into a Balanced diagram of type $2$ in which $x=2$. 
    By our usual clasping argument, this means that $B=n$ for Balanced diagrams of type $2$ when $x\geq 4$. 

    And thus $B=n$ for any Balanced diagram of type $2$. 
\end{proof}

\subsection{Clasp Moves on an Oddly Balanced diagram of type $\mathbf{2}$}

We proceed in much the same manner for Oddly Balanced diagrams. However, we run into an obstacle. In Figure \ref{fig:Oddly Balanced B not equal to n} we see a diagram of knot $7_4$ which is the smallest possible Oddly Balanced diagram. But this is a diagram with $1$ link component and $3$ $B$-circles, so clearly we cannot say that $B=n$ for Oddly Balanced diagrams.  However, we will see that this is as bad as it gets, and while the number of $B$-circles might not always be equal to the number of link components, it will only ever be a little bit off-balance. 

\input{Figures/Oddly_Balanced_B_not_equal_to_n}

\begin{theorem}\label{Oddly Bal B almost equal n}
    Let $D$ be an Oddly Balanced diagram of type $2$. Then $B=n$ or $B= n \pm 2$. 
\end{theorem}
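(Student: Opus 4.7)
The plan is to mimic the proof of Theorem \ref{Bal type 2 B=n}, reducing via clasp moves to a base case whose $B$-count can be verified by direct inspection. The key parity observation is that in an Oddly Balanced diagram of type $2$ the number $x$ of edges bordering both holes must be \emph{odd}: each hole boundary is a cycle, so by Proposition \ref{even cycles} the hole lengths $k_1, k_2$ are even, and inclusion-exclusion gives $x = k_1 + k_2 - (\text{number of cycle edges})$, where the right-hand side is odd by the definition of Oddly Balanced. In particular $x \geq 1$.

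As in the Balanced type $2$ case, Proposition \ref{leaves dont matter} lets me restrict attention to diagrams whose reduced $A$-state graph has no leaves, so every edge is a cycle edge corresponding to exactly one crossing. The induction then proceeds in two stages. If $x \geq 3$, I perform a clasp move analogous to the one depicted in Figure \ref{fig:clasping Bal type 2 x geq 4}, applied along a 3-vertex sub-path of the shared edges whose middle vertex has degree $2$; this reduces $x$ by $2$ while keeping the diagram Oddly Balanced (the parity of the cycle-edge count is unchanged), and by Lemma \ref{clasp moves preserve B and n} it preserves both $B$ and $n$. Iterating yields the case $x = 1$. In the second stage, whenever $x = 1$ and some $k_i \geq 6$, I perform a clasp move within the longer cycle at a degree-$2$ vertex not on the shared edge, reducing $k_i$ by $2$ without altering $x$, $B$, or $n$. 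After finitely many steps one arrives at the minimal configuration $k_1 = k_2 = 4$, $x = 1$.

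The main obstacle is this base case, where the reduced $A$-state graph consists of two $4$-cycles sharing a single edge ($6$ vertices, $7$ edges, $7$ crossings). One must show directly that $B - n \in \{0, \pm 2\}$ for every positive diagram realizing this graph. My plan is to enumerate the finitely many ways the six $A$-circles can be embedded in the plane consistently with the graph (up to planar isotopy and reflection, the number of such embeddings is small), and in each one to trace the $B$-smoothings and count $B$-circles directly. The diagram of $7_4$ in Figure \ref{fig:Oddly Balanced B not equal to n} already realizes the extreme value $B = n + 2$, confirming that the $\pm 2$ slack in the theorem is sharp; the bulk of the work will be verifying that none of the other admissible embeddings push $|B - n|$ beyond $2$.
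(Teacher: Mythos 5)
Your reduction skeleton (restrict to leafless reduced $A$-state graphs via Proposition \ref{leaves dont matter}, clasp along the shared path to shrink $x$, then clasp inside the longer hole to shrink $k_i$) matches the paper's strategy, and your parity argument that $x$ is odd is correct. But there are two genuine gaps. First, your claim that the $x\geq 3$ clasp ``reduces $x$ by $2$ while keeping the diagram Oddly Balanced'' fails whenever one hole's non-shared boundary is a single edge, i.e.\ $k_i = x+1$ (for instance the $(4,6)$ graph with $x=3$). The clasp along the shared path reduces $x$, $k_1$, and $k_2$ each by $2$, so that hole becomes a bigon; a bigon is a doubled edge, the reduced $A$-state graph loses a hole, and the output is a Burdened diagram of type $1$, not an Oddly Balanced diagram of type $2$. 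Your induction therefore cannot reach your declared base case from such configurations, and they do occur. The paper isolates exactly these as special cases and resolves them by noting that the clasp output is a Burdened type-$1$ diagram with burdening number $1$, so Corollary \ref{bound on B-circles Bur} gives $B \leq n+2$, whence $B = n$ or $n \pm 2$.

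Second, your base case $(k_1,k_2)=(4,4)$, $x=1$ is left as a plan --- enumerate the planar embeddings of the six $A$-circles and count $B$-circles in each --- and this is where the content of the theorem actually lives, since that configuration realizes the extreme $B=n+2$ (the $7_4$ diagram of Figure \ref{fig:Oddly Balanced B not equal to n}). You have neither carried out the enumeration nor bounded the number of admissible nestings and cyclic orderings of crossings around the two degree-$3$ circles, so the hardest step is unverified. The paper sidesteps the enumeration entirely with the same device as above: one further clasp move turns the $(4,4)$, $x=1$ configuration into a Burdened type-$1$ diagram with $m=1$, and Corollary \ref{bound on B-circles Bur} finishes. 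Replacing both your stuck intermediate cases and your base-case enumeration with that single observation would close the argument.
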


\begin{proof}
    As in the last section, we show that all Oddly Balanced diagrams of type $2$ are claspable, and that by performing a sequence of clasp moves, we can shrink the holes. An Oddly Balanced diagram has an odd number of cycle edges in its reduced $A$-state graph, and therefore has an odd number of edges $x$ that bound both holes in the reduced $A$-state graph. We look at two special cases first, and then look at the three general cases of $x=1$, $x=3$, and $x \geq 5$.  As before, by Proposition \ref{leaves dont matter} it suffices to look at the $A$-state graphs of diagrams $D$ whose reduced $A$-state graphs have no leaves. (That is, it suffices to look at $A$-state graphs $G$ where every edge is part of a cycle.)\\

    \begin{figure}[h]
    \centering
  \begin{tikzpicture}
      \begin{scope}[scale=0.8, >=Stealth]
       
 \begin{scope}[scale = 0.6, xshift=-8cm]
  
       \node (a) at (-3,1){};
       \node (b) at (-1,1){};
       \node (c) at (1,1) {};
       \node (d) at (1, -1) {};
       \node (e) at (-1, -1) {};
       \node (f) at (-3, -1) {};
        \draw[]
        (a) -- (b)
        (b) -- (c) 
        (b) -- (e)
        (c) -- (d)
        (d) -- (e)
         (e) -- (f)
         (f) -- (a);
        
        \filldraw[color=cyan, fill=cyan!5] 
        (a) circle (1/3)
        (b) circle (1/3)
        (c) circle (1/3)
        (d) circle (1/3)       
        (e) circle (1/3)
        (f) circle (1/3); 
        
       \end{scope}

\begin{scope}[scale = 0.6, xshift=0cm]
  
       \node (a) at (-3,1){};
       \node (b) at (-1,1){};
       \node (c) at (1,1) {};
       \node (d) at (1, -1) {};
       \node (e) at (-1, -1) {};
       \node (f) at (-3, -1) {};
        \draw[]
       (a) -- (b)
        (b) -- (c) 
        (b) -- (e)
        (c) -- (d)
        (d) -- (e)
         (e) -- (f)
         (f) -- (a);
         \draw[thick, orange]
         (b) to (d);
        
        \filldraw[color=cyan, fill=cyan!5] 
        (a) circle (1/3)
        (b) circle (1/3)
        (c) circle (1/3)
        (d) circle (1/3)       
        (e) circle (1/3)
        (f) circle (1/3); 
        
       \end{scope}

\begin{scope}[scale = 0.6, xshift=8cm]
  
       \node (a) at (-3,1){};
       \node (b) at (-1,1){};
       \node (c) at (1,1) {};
       \node (d) at (1, -1) {};
       \node (e) at (-1, -1) {};
       \node (f) at (-3, -1) {};
        \draw[]
        (a) -- (b)
        (b) -- (c) 
        (b) -- (e)
        (b) -- (d)
         (e) -- (f)
         (f) -- (a);
        \draw[out=20, in=160, relative]
        (b) to (c)
        (b) to (d);
        \draw[out=35, in =145, relative, double=green]
        (b) to (e);
        
        \filldraw[color=cyan, fill=cyan!5] 
        (a) circle (1/3)
        (b) circle (1/3)
        (c) circle (1/3)
        (d) circle (1/3)       
        (e) circle (1/3)
        (f) circle (1/3); 
        
       \end{scope}
       
       \end{scope}
       
  \end{tikzpicture}
    \caption{Special case $x=1$: We have only one (up to symmetry) choice of where to clasp, and this gives us a Burdened diagram of type $1$, with the edge preventing it from being Balanced overlined in green.}
    \label{fig:clasping Oddly Bal type 2 special case x=1}
\end{figure}
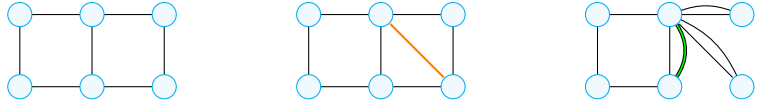
\begin{figure}[h]
    \centering
  \begin{tikzpicture}
      \begin{scope}[scale=0.65, >=Stealth]

      \begin{scope}[]

  \begin{scope}[scale = 0.6, xshift=-10cm]
  
       \node (a) at (0,3){};
       \node (b) at (0,1){};
       \node (c) at (0,-1) {};
       \node (d) at (0,-3){};
       \node (e) at (-3, 1) {};
       \node (f) at (-3, -1) {};
       \node (g) at (4, 1){};
       \node (h) at (4, -1){};
        \draw[]
        (a) -- (b)
        (b) -- (c) 
        (a) -- (g)
        (g) -- (h)
          (c) -- (d)
          (h) -- (d);
        \draw[out= 60, in = 120, relative]
           (d.center) to (a.center);
        
        \filldraw[color=cyan, fill=cyan!5] 
        (a) circle (1/3)
        (b) circle (1/3)
        (c) circle (1/3)
        (d) circle (1/3)       
        (g) circle (1/3)
        (h) circle (1/3);
        
       \end{scope}

 \begin{scope}[scale = 0.6, xshift=0cm]
  
        \node (a) at (0,3){};
       \node (b) at (0,1){};
       \node (c) at (0,-1) {};
       \node (d) at (0,-3){};
       \node (e) at (-3, 1){};
       \node (f) at (-3, -1) {};
       \node (g) at (4, 1){};
       \node (h) at (4, -1){};
         \draw[]
        (a) -- (b)
        (b) -- (c) 
        (a) -- (g)
        (g) -- (h)
          (c) -- (d)
          (h) -- (d);
        \draw[out= 60, in = 120, relative]
           (d.center) to (a.center);
        \draw[out = 35, in = 145, relative, thick, orange]
        (c.center) to (a.center);
        
        \filldraw[color=cyan, fill=cyan!5] 
        (a) circle (1/3)
        (b) circle (1/3)
        (c) circle (1/3)
        (d) circle (1/3)       
         (g) circle (1/3)
        (h) circle (1/3); 
        
       \end{scope}

 \begin{scope}[scale = 0.6, xshift=9cm]
  
        \node (a) at (0,3){};
       \node (b) at (0,1){};
       \node (c) at (0,-1) {};
       \node (d) at (0,-3){};
       \node (e) at (-3, 1) {};
       \node (f) at (-3, -1) {};
       \node (g) at (4, 1){};
       \node (h) at (4, -1){};
        \draw[]
        (b) -- (c) 
        (c) -- (g)
        (g) -- (h)
          (c) -- (d)
          (h) -- (d);
        \draw[out = 30, in = 150, relative]
        (b.center) to (c.center)
        (c.center) to (a.center);
        \draw[out = 45, in = 135, relative]
        (c.center) to (a.center);
        \draw[out = 60, in = 120, relative, double=green]
        (d.center) to (c.center);

        \filldraw[color=cyan, fill=cyan!5] 
        (a) circle (1/3)
        (b) circle (1/3)
        (c) circle (1/3)
        (d) circle (1/3)       
         (g) circle (1/3)
        (h) circle (1/3); 
        
       \end{scope}

      \end{scope}

\begin{scope}[yshift=-4.5cm]

 \begin{scope}[scale = 0.6, xshift=0cm]
  
        \node (a) at (0,3){};
       \node (b) at (0,1){};
       \node (c) at (0,-1) {};
       \node (d) at (0,-3){};
       \node (e) at (-3, 1){};
       \node (f) at (-3, -1) {};
       \node (g) at (4, 1){};
       \node (h) at (4, -1){};
         \draw[]
        (a) -- (b)
        (b) -- (c) 
        (a) -- (g)
        (g) -- (h)
          (c) -- (d)
          (h) -- (d);
        \draw[out= 60, in = 120, relative]
           (d) to (a);
        \draw[thick, orange]
        (a.center) to (h.center);
        
        \filldraw[color=cyan, fill=cyan!5] 
        (a) circle (1/3)
        (b) circle (1/3)
        (c) circle (1/3)
        (d) circle (1/3)       
         (g) circle (1/3)
        (h) circle (1/3); 
        
       \end{scope}

 \begin{scope}[scale = 0.6, xshift=9cm]
  
        \node (a) at (0,3){};
       \node (b) at (0,1){};
       \node (c) at (0,-1) {};
       \node (d) at (0,-3){};
       \node (e) at (-3, 1) {};
       \node (f) at (-3, -1) {};
       \node (g) at (4, 1){};
       \node (h) at (4, -1){};
        \draw[]
        (a.center) -- (h.center)
        (b.center) -- (c.center) 
        (b) -- (h)
        (g) -- (h)
          (c) -- (d)
          (h) -- (d);
        \draw[out = 30, in = 150, relative]
        (g.center) to (h.center);
        \draw[out = 15, in = 165, relative]
        (a.center) to (h.center);
        \draw[out = 20, in = 160, relative, double=green]
        (h.center) to (d.center);

        \filldraw[color=cyan, fill=cyan!5] 
        (a) circle (1/3)
        (b) circle (1/3)
        (c) circle (1/3)
        (d) circle (1/3)       
         (g) circle (1/3)
        (h) circle (1/3); 
        
       \end{scope}

\end{scope}

       \end{scope}
       
  \end{tikzpicture}
    \caption{Special Case $x=3$: The middle column shows our choice of clasping, and the right column shows the diagram that results from performing that clasp. The extra edge preventing the diagram from being Balanced is overlined in green.}
    \label{fig:clasping Oddly Bal type 2 special case x = 3}
\end{figure}
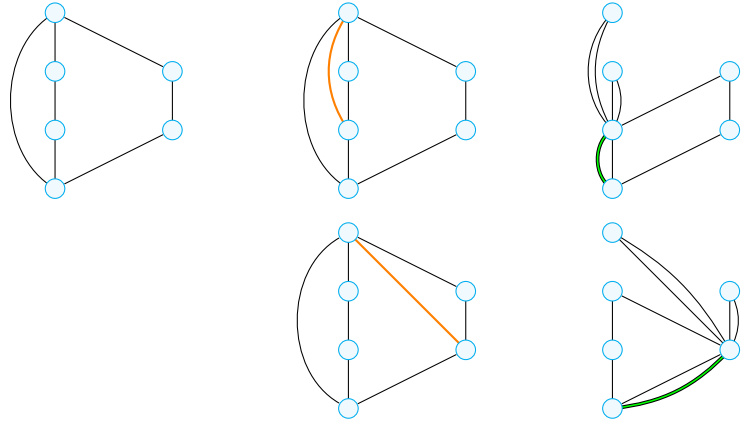

\underline{Two Special Cases}

In Figure \ref{fig:clasping Oddly Bal type 2 special case x=1} we see a graph $G$ that is the $A$-state graph of a $(4,4)$-Oddly Balanced diagram $D$. While it is claspable, we see that any choice of where to clasp the diagram results in a Burdened diagram $D'$ of type $1$ with one extra edge preventing it from being Balanced. By Corollary \ref{bound on B-circles Bur} we have that $B_{D'} = n(D')$ or $B_{D'} = n(D') \pm 2$. Since clasp moves do not change the number of link components or the number of $B$-circles, this means that $B_D = B_{D'} = n(D') = n(D)$, or $B_D = B_{D'} = n(D') \pm 2 = n(D) \pm 2$. 

In Figure \ref{fig:clasping Oddly Bal type 2 special case x = 3}, we see a graph $G$ that is the $A$-state graph of a $(4, 6)$-Oddly Balanced diagram $D$. While it is claspable, any choice of where to clasp the diagram results in a Burdened diagram of type $1$ with one edge preventing it from being Balanced. In the same manner as the first special case, it follows by Corollary \ref{bound on B-circles Bur} that the number of $B$-circles in the resulting diagram $D'$ differs from the number of link components by at most $2$. And the result follows.\\

\underline{Case: $x= 1$}

If $x=1$ and (without loss of generality) $k_2 \geq 6$, then we can perform a clasp move to transform the $(k_1, k_2)$-Oddly Balanced diagram into a $(k_1, k_2-2)$-Oddly Balanced diagram. As in the case of Balanced type $2$ diagrams, we see (Figure \ref{fig:clasping Oddly Bal type 2 x=1 can do it}) that the resulting diagram is still claspable. We can perform $\frac{k_2-4}{2}$ clasp moves to tranform the diagram into a $(k_1, 4)$-Oddly Balanced diagram, and then perform $\frac{k_1-4}{2}$ clasp moves to transform it into a $(4,4)$-Oddly Balanced diagram, $D'$. Then $D$ had the same number of link components and $B$-circles as $D'$, and we just showed that $D'$ has $B=n$ or $B= n \pm 2$, so the result follows. \\

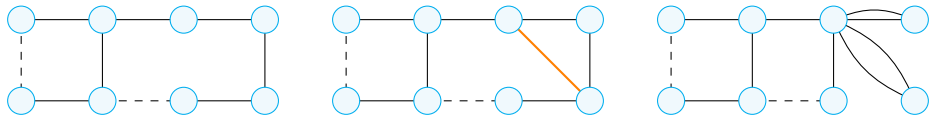
\begin{figure}[h]
    \centering
  \begin{tikzpicture}
      \begin{scope}[scale=0.9, >=Stealth]
       
  \begin{scope}[scale = 0.6, xshift=-8cm]
  
       \node (a) at (-3,1){};
       \node (b) at (-1,1){};
       \node (c) at (1,1) {};
       \node (d) at (3,1){};
       \node (e) at (3, -1) {};
       \node (f) at (1, -1) {};
       \node (g) at (-1, -1) {};
       \node (h) at (-3, -1) {};
        \draw[]
        (b) -- (g)
        (a) -- (b)
        (b) -- (c) 
        (c) -- (d)
        (d) -- (e)
        (e) -- (f)
        (g) -- (h);
        \draw[dashed]
        (h) -- (a)
        (f) -- (g);
        
        \filldraw[color=cyan, fill=cyan!5] 
        (a) circle (1/3)
        (b) circle (1/3)
        (c) circle (1/3)
        (d) circle (1/3)       
        (e) circle (1/3)
        (f) circle (1/3)
        (g) circle (1/3)
        (h) circle (1/3); 
        
       \end{scope}

  \begin{scope}[scale = 0.6, xshift=0cm]
  
       \node (a) at (-3,1){};
       \node (b) at (-1,1){};
       \node (c) at (1,1) {};
       \node (d) at (3,1){};
       \node (e) at (3, -1) {};
       \node (f) at (1, -1) {};
       \node (g) at (-1, -1) {};
       \node (h) at (-3, -1) {};
        \draw[]
        (b) -- (g)
        (a) -- (b)
        (b) -- (c) 
        (c) -- (d)
        (d) -- (e)
        (e) -- (f)
        (g) -- (h);
        \draw[dashed]
        (h) -- (a)
        (f) -- (g);
        \draw[thick, orange]
        (c) -- (e);
        
        \filldraw[color=cyan, fill=cyan!5] 
        (a) circle (1/3)
        (b) circle (1/3)
        (c) circle (1/3)
        (d) circle (1/3)       
        (e) circle (1/3)
        (f) circle (1/3)
        (g) circle (1/3)
        (h) circle (1/3); 
        
       \end{scope}

\begin{scope}[scale = 0.6, xshift=8cm]
  
       \node (a) at (-3,1){};
       \node (b) at (-1,1){};
       \node (c) at (1,1) {};
       \node (d) at (3,1){};
       \node (e) at (3, -1) {};
       \node (f) at (1, -1) {};
       \node (g) at (-1, -1) {};
       \node (h) at (-3, -1) {};
        \draw[]
        (c) -- (f)
         (b) -- (g)
        (a) -- (b)
        (b) -- (c) 
        (c) -- (d)
        (g) -- (h);
        \draw[dashed]
        (h) -- (a)
        (f) -- (g);
        \draw[out=20, in=160, relative]
        (c) to (d)
        (e) to (c)
        (c) to (e);
        
        \filldraw[color=cyan, fill=cyan!5] 
        (a) circle (1/3)
        (b) circle (1/3)
        (c) circle (1/3)
        (d) circle (1/3)       
        (e) circle (1/3)
        (f) circle (1/3)
        (g) circle (1/3)
        (h) circle (1/3); 
        
       \end{scope}
       
       \end{scope}
       
  \end{tikzpicture}
    \caption{$x=1$: When $k_2\geq 6$, we can clasp our $(k_1, k_2)$-Oddly Balanced diagram and obtain a $(k_1, k_2-2)$-Oddly Balanced diagram}
    \label{fig:clasping Oddly Bal type 2 x=1 can do it}
\end{figure}

\underline{Case: $x=3$}

If $x=3$ and $k_1, k_2 \geq 6$, then we can perform a clasp move (shown in Figure \ref{fig:clasping Oddly Bal type 2 x = 3}) that transforms the diagram into a $(k_1-2, k_2-2)$-Oddly Balanced diagram with $x=1$. Whatever value we have for $k_1-2$ and $ k_2-2$, we have already dealt with it in the previous cases, and the result follows. \\

\begin{figure}[h]
    \centering
  \begin{tikzpicture}
      \begin{scope}[scale=0.6, >=Stealth]
       
  \begin{scope}[scale = 0.6, xshift=-8cm]
  
       \node (a) at (0,5){};
       \node (b) at (0,1){};
       \node (c) at (0,-1) {};
       \node (d) at (0,-5){};
       \node (e) at (-3, 1) {};
       \node (f) at (-3, -1) {};
       \node (g) at (3, 1){};
       \node (h) at (3, -1){};
        \draw[]
        (a.center) -- (b.center)
        (b.center) -- (c.center) 
        (a.center) -- (e.center)
        (e.center) -- (f.center)
        (a.center) -- (g.center)
        (g.center) -- (h.center);
         \draw[dashed]
          (f.center) -- (d.center)
          (c.center) -- (d.center)
          (h.center) -- (d.center);
        
        \filldraw[color=cyan, fill=cyan!5] 
        (a) circle (1/3)
        (b) circle (1/3)
        (c) circle (1/3)
        (d) circle (1/3)       
        (e) circle (1/3)
        (f) circle (1/3)
        (g) circle (1/3)
        (h) circle (1/3);
        
       \end{scope}

 \begin{scope}[scale = 0.6, xshift=0cm]
  
        \node (a) at (0,5){};
       \node (b) at (0,1){};
       \node (c) at (0,-1) {};
       \node (d) at (0,-5){};
       \node (e) at (-3, 1) {};
       \node (f) at (-3, -1) {};
       \node (g) at (3, 1){};
       \node (h) at (3, -1){};
        \draw[]
        (a.center) -- (b.center)
        (b.center) -- (c.center) 
        (a.center) -- (e.center)
        (e.center) -- (f.center)
        (a.center) -- (g.center)
        (g.center) -- (h.center);
         \draw[dashed]
          (f) -- (d)
          (c) -- (d)
          (h) -- (d);
        \draw[out = 35, in = 145, relative, thick, orange]
        (c.center) to (a.center);
        
        \filldraw[color=cyan, fill=cyan!5] 
        (a) circle (1/3)
        (b) circle (1/3)
        (c) circle (1/3)
        (d) circle (1/3)       
        (e) circle (1/3)
        (f) circle (1/3)
         (g) circle (1/3)
        (h) circle (1/3); 
        
       \end{scope}

 \begin{scope}[scale = 0.6, xshift=8cm]
  
        \node (a) at (0,5){};
       \node (b) at (0,1){};
       \node (c) at (0,-1) {};
       \node (d) at (0,-5){};
       \node (e) at (-3, 1) {};
       \node (f) at (-3, -1) {};
       \node (g) at (3, 1){};
       \node (h) at (3, -1){};
        \draw[]
        (b.center) -- (c.center) 
        (c.center) -- (e.center)
        (e.center) -- (f.center)
        (c.center) -- (g.center)
        (g.center) -- (h.center);
         \draw[dashed]
          (f) -- (d)
          (c) -- (d)
          (h) -- (d);
        \draw[out = 30, in = 150, relative]
        (b.center) to (c.center)
        (c.center) to (a.center);
        \draw[out = 45, in = 135, relative]
        (c.center) to (a.center);

        \filldraw[color=cyan, fill=cyan!5] 
        (a) circle (1/3)
        (b) circle (1/3)
        (c) circle (1/3)
        (d) circle (1/3)       
        (e) circle (1/3)
        (f) circle (1/3)
         (g) circle (1/3)
        (h) circle (1/3); 
        
       \end{scope}
       
       \end{scope}
       
  \end{tikzpicture}
    \caption{$x=3$: The orange in the middle shows the vertices that will be clasped together to produce a diagram whose $A$-state graph looks like the figure on the right}
    \label{fig:clasping Oddly Bal type 2 x = 3}
\end{figure}


\begin{figure}[h]
    \centering
  \begin{tikzpicture}
      \begin{scope}[scale=0.6, >=Stealth]
       
  \begin{scope}[scale = 0.6, xshift=-8cm]
  
       \node (a) at (0,5){};
       \node (b) at (0,3){};
       \node (c) at (0,1) {};
       \node (d) at (0,-1){};
       \node (e) at (0, -3) {};
       \node (f) at (0, -5) {};
       \node (g) at (3, 3){};
       \node (h) at (3, -3){};
        \draw[]
        (a.center) -- (b.center)
        (b) -- (c) 
        (c) -- (d)
        (d) -- (e)
        (a.center) -- (g.center)
        (g.center) -- (h.center);
         \draw[dashed]
          (e) -- (f)
          (h) -- (f);
          \draw[out= 60, in = 120, relative, dashed]
          (f.center) to (a.center);
        
        \filldraw[color=cyan, fill=cyan!5] 
        (a) circle (1/3)
        (b) circle (1/3)
        (c) circle (1/3)
        (d) circle (1/3)       
        (e) circle (1/3)
        (f) circle (1/3)
        (g) circle (1/3)
        (h) circle (1/3);
        
       \end{scope}

 \begin{scope}[scale = 0.6, xshift=0cm]
  
       \node (a) at (0,5){};
       \node (b) at (0,3){};
       \node (c) at (0,1) {};
       \node (d) at (0,-1){};
       \node (e) at (0, -3) {};
       \node (f) at (0, -5) {};
        \node (g) at (3, 3){};
       \node (h) at (3, -3){};
        \draw[]
        (a) -- (b)
        (b) -- (c) 
        (c) -- (d)
        (d) -- (e)
        (a.center) -- (g.center)
        (g) -- (h);
         \draw[dashed]
          (e) -- (f)
          (h) -- (f);
          \draw[out= 60, in = 120, relative, dashed]
          (f) to (a);
        \draw[out = 35, in = 145, relative, thick, orange]
        (c.center) to (a.center);
        
        \filldraw[color=cyan, fill=cyan!5] 
        (a) circle (1/3)
        (b) circle (1/3)
        (c) circle (1/3)
        (d) circle (1/3)       
        (e) circle (1/3)
        (f) circle (1/3)
         (g) circle (1/3)
        (h) circle (1/3); 
        
       \end{scope}

 \begin{scope}[scale = 0.6, xshift=8cm]
  
       \node (a) at (0,5){};
       \node (b) at (0,3){};
       \node (c) at (0,1) {};
       \node (d) at (0,-1){};
       \node (e) at (0, -3) {};
       \node (f) at (0, -5) {};
        \node (g) at (3, 3){};
       \node (h) at (3, -3){};
        \draw[]
        (b) -- (c) 
        (c) -- (d)
       (d) -- (e)
        (g) -- (h)
        (g.center) -- (c.center);
         \draw[dashed]
          (e) -- (f)
          (h) -- (f);
          \draw[out= 60, in = 120, relative, dashed]
          (f.center) to (c.center);
        \draw[out = 30, in = 150, relative]
        (b.center) to (c.center)
        (c.center) to (a.center);
        \draw[out = 45, in = 135, relative]
        (c.center) to (a.center);

        \filldraw[color=cyan, fill=cyan!5] 
        (a) circle (1/3)
        (b) circle (1/3)
        (c) circle (1/3)
        (d) circle (1/3)       
        (e) circle (1/3)
        (f) circle (1/3)
         (g) circle (1/3)
        (h) circle (1/3); 
        
       \end{scope}
       
       \end{scope}
       
  \end{tikzpicture}
    \caption{$x\geq 5$: The orange in the middle shows the vertices that will be clasped together to produce a diagram whose $A$-state graph looks like the figure on the right}
    \label{fig:clasping Oddly Bal type 2 x geq 5}
\end{figure}


\underline{Case: $x=5$}

If $x=5$, observe that at least one of $k_1, k_2$ is $\geq x+1$. (If they were both equal to $x+1$, then we would have a repeated edge that is part of a nontrivial cycle, which violates the definition of an Oddly Balanced diagram.) Figure \ref{fig:clasping Oddly Bal type 2 x geq 5} shows a how we can perform a clasp move to transform the diagram into a $(k_1 -2, k_2-2)$-Oddly Balanced diagram in which $x$ has decreased by $2$. If we instead performed $\frac{x-3}{2}$ clasp moves of this form, then we get an Oddly Balanced diagram with $x=3$, and this was the previous case. Again, since clasp moves do not change the number of link components or the number of $B$-circles, the result follows. 

Now we have proven the result for all Oddly Balanced diagrams of type $2$. 

\end{proof}

\section{Burdened and Oddly Burdened Diagrams}

In every Burdened (or Oddly Burdened) diagram of type $0, 1,$ or $2$, we can smooth away some crossings to obtain a Balanced (or Oddly Balanced) diagram of type $0, 1,$ or $2$. In Figure \ref{fig:smoothing number vs burdening number} we see an example in which smoothing just one crossing transforms a Burdened diagram of type $1$ into a Balanced diagram of type $0$. In general, we may not always be able to quickly obtain a Balanced diagram of a smaller type. What we \textit{can} always do, though, is smooth away some of the crossings of a Burdened (or Oddly Burdened) diagram of type $r$ to obtain a Balanced (or Oddly Balanced, respectively) diagram of the same type $r$. 

\begin{definition}
    The number of crossings that must be smoothed away in a Burdened (or Oddly Burdened) diagram of type $r$ to produce a Balanced (or Oddly Balanced) diagram of the same type $r$ is called the \underline{\textbf{burdening number}}. It counts the number of crossings that burden the diagram, preventing it from being a Balanced (or Oddly Balanced) diagram of the same type. We denote the burdening number by $m$. 
\end{definition}

  The burdening number is the least upper bound on the number of crossings that must be smoothed in order to obtain a Balanced (or Oddly Balanced) diagram of any type. It is also the greatest number of crossings than can be smoothed away to result in a Balanced (or Oddly Balanced) diagram. 

 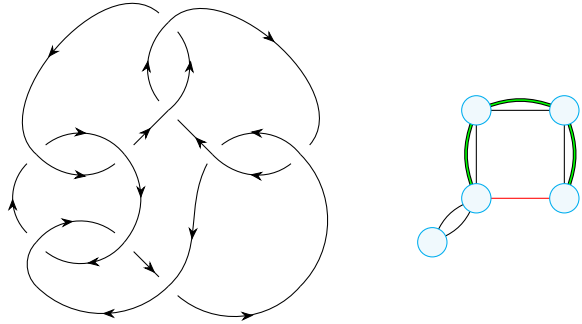
\begin{figure}[h]
    \centering
  \begin{tikzpicture}
      \begin{scope}[scale=0.65, >=Stealth]

      \begin{scope}[scale=0.9, xshift=-4cm]
       \node (a) at (-3,0){};
       \node (b) at (-1,0){};
       \node (c) at (0,3) {};
       \node (d) at (0,1){};
       \node (e) at (1,0) {};
       \node (f) at (3,0){};
       \node (g) at (3,-3) {};
       \node (h) at (0, -3){};
       \node (i) at (-3,-3){};
       \node (j) at (-3, -2){};
       \node (k) at (-1, -2){};
       

       \draw [out=90, in=90, relative, decoration={markings, mark=at position 0.5 with {\arrow{<}}}, postaction={decorate}] 
       (a.center) to (c);
       \draw [out=90, in=90, relative, decoration={markings, mark=at position 0.5 with {\arrow{>}}}, postaction={decorate}] 
       (c.center) to (f);
       \draw [out=45, in=135, relative, decoration={markings, mark=at position 0.5 with {\arrow{}}}, postaction={decorate}] 
       (f.center) to (g.center);
       \draw [out=45, in=135, relative, decoration={markings, mark=at position 0.5 with {\arrow{<}}}, postaction={decorate}] 
       (g.center) to (h);
       \draw [out=45, in=135, relative, decoration={markings, mark=at position 0.5 with {\arrow{>}}}, postaction={decorate}] 
       (h.center) to (i.center);
       \draw [out=45, in=135, relative, decoration={markings, mark=at position 0.5 with {\arrow{}}}, postaction={decorate}] 
       (i.center) to (j.center);
       \draw [out=45, in=135, relative, decoration={markings, mark=at position 0.5 with {\arrow{>}}}, postaction={decorate}] 
       (j) to (a);

       \draw [ decoration={markings, mark=at position 0.5 with {\arrow{>}}}, postaction={decorate}] 
       (b) to (d.center);
       \draw [ decoration={markings, mark=at position 0.5 with {\arrow{>}}}, postaction={decorate}] 
       (e.center) to (d);
       \draw [out=-30, in=170, relative, decoration={markings, mark=at position 0.5 with {\arrow{<}}}, postaction={decorate}] 
       (h.center) to (e);
       \draw [decoration={markings, mark=at position 0.5 with {\arrow{<}}}, postaction={decorate}] 
       (h) to (k);
       \draw [out=40, in=140, relative, decoration={markings, mark=at position 0.5 with {\arrow{>}}}, postaction={decorate}] 
       (b.center) to (k.center);


       \draw [out=45, in=135, relative, decoration={markings, mark=at position 0.5 with {\arrow{>}}}, postaction={decorate}] 
       (a) to (b.center);
       \draw [out=45, in=140, relative, decoration={markings, mark=at position 0.5 with {\arrow{<}}}, postaction={decorate}] 
       (b) to (a.center);
       \draw [out=45, in=140, relative, decoration={markings, mark=at position 0.5 with {\arrow{>}}}, postaction={decorate}] 
       (d) to (c.center);
       \draw [out=45, in=135, relative, decoration={markings, mark=at position 0.5 with {\arrow{<}}}, postaction={decorate}] 
       (c) to (d.center);
       \draw [out=45, in=140, relative, decoration={markings, mark=at position 0.5 with {\arrow{>}}}, postaction={decorate}] 
       (f) to (e.center);
       \draw [out=45, in=135, relative, decoration={markings, mark=at position 0.5 with {\arrow{<}}}, postaction={decorate}] 
       (e) to (f.center);
       \draw [out=45, in=135, relative, decoration={markings, mark=at position 0.5 with {\arrow{>}}}, postaction={decorate}] 
       (j.center) to (k);
       \draw [out=45, in=135, relative, decoration={markings, mark=at position 0.5 with {\arrow{>}}}, postaction={decorate}] 
       (k.center) to (j);
       \end{scope}



  \begin{scope}[scale = 0.9, xshift=4cm]
  
       \node (a) at (-1,1){};
       \node (b) at (1,1){};
       \node (c) at (1,-1) {};
       \node (d) at (-1,-1){};
       \node (e) at (-2, -2) {};
        \draw[]
        (a) -- (b)
        (b) -- (c) 
        (d) -- (a);
        \draw[red]
        (c) -- (d);
        \draw[out=20, in=160, relative]
        (a) to (b)
        (b) to (c)
        (d) to (a)
        (d) to (e)
        (e) to (d);
        \draw[out=20, in=160, relative, double=green]
        (a) to (b)
        (b) to (c)
        (d) to (a);

        \filldraw[color=cyan, fill=cyan!5] 
        (a) circle (1/3)
        (b) circle (1/3)
        (c) circle (1/3)
        (d) circle (1/3)       
        (e) circle (1/3);
        
       \end{scope}

       
       \end{scope}
       
  \end{tikzpicture}
    \caption{Smoothing the crossing that corresponds to the red edge will transform this Burdened type $1$ diagram into a Balanced type $0$ diagram. Smoothing crossings corresponding to the three green edges will transform this Burdened type $1$ diagram into a Balanced type $1$ diagram.  The burdening number is $m=3$.}
    \label{fig:smoothing number vs burdening number}
\end{figure}

\begin{corollary} [Corollary to Theorems \ref{Bal type 1 B=n} and \ref{Bal type 2 B=n}]
\label{bound on B-circles Bur}
    Let $D$ be a Burdened diagram of type $1$ or $2$. Let $B_D$ be the number of $B$-circles in the diagram, let $n(D)$ be the number of link components, and let $m$ be the Burdening number. Then 
    $$B_D \leq n(D) + 2m.$$
\end{corollary}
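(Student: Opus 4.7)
The plan is to smooth the $m$ burdening crossings of $D$ to obtain a Balanced diagram $D'$ of the same type as $D$, and then apply Theorems \ref{Bal type 1 B=n} and \ref{Bal type 2 B=n}. By the definition of the burdening number, $D'$ is Balanced of the same type as $D$, so in either case we have $B_{D'} = n(D')$.

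The heart of the argument is then to control how $B$ and $n$ shift under a single smoothing. The smoothing in question is the oriented resolution of a positive crossing, which coincides with the $A$-smoothing. I would observe that this always changes the number of $B$-circles by exactly $\pm 1$: the all-$B$ state of the smoothed diagram is obtained from the all-$B$ state of $D$ by toggling one bit, and the two planar pairings of the four strand ends at the crossing differ by merging two circles into one or splitting one circle into two. Likewise, the number of link components $n$ changes by at most $1$ per smoothing, since an oriented resolution either merges two components, splits one component, or leaves the component count alone. Iterating over the $m$ smoothings gives
\[ B_D - B_{D'} \leq m \quad \text{and} \quad n(D') - n(D) \leq m, \]
and substituting $B_{D'} = n(D')$ yields
\[ B_D \;\leq\; B_{D'} + m \;=\; n(D') + m \;\leq\; n(D) + 2m, \]
which is the desired inequality.

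The only point that needs to be confirmed carefully, and the one step I expect to require a brief justification, is that the smoothings used to reduce a Burdened diagram of type $r$ to a Balanced one actually produce a positive diagram with the same $A$-circles as $D$, so that Theorems \ref{Bal type 1 B=n} and \ref{Bal type 2 B=n} really apply to $D'$. This is automatic, because in a positive diagram the oriented smoothing equals the $A$-smoothing, which simply deletes an edge from the reduced $A$-state graph without creating or destroying any $A$-circle. Given this, the whole argument is essentially bookkeeping on top of the two theorems already established.
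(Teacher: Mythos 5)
Your proposal is correct and follows essentially the same route as the paper: smooth the $m$ burdening crossings to reach a Balanced diagram $D'$ of the same type, invoke Theorems \ref{Bal type 1 B=n} and \ref{Bal type 2 B=n} to get $B_{D'}=n(D')$, and track that each oriented (= $A$-) smoothing shifts both $B$ and $n$ by $\pm 1$, which the paper delegates to Proposition 2.9 of \cite{Buchanan_2022} and you justify directly. Your explicit check that the smoothed diagram remains positive with the same $A$-circles, and your statement of the component-count inequality in the direction actually used ($n(D')\leq n(D)+m$), are small improvements in care but not a different argument.
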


\begin{proof}
Let $D$ be a Burdened diagram of type $r$. Smooth $m$ crossings to obtain a Balanced diagram $D'$ of the same type $r$. Each smoothing changes the number of $B$-circles in the diagram by $\pm 1$ and changes the number of link components by $\pm 1$. So, $B_D \leq B_{D'} + m $ and $n(D) \leq n(D') + m$. (This follows from Proposition 2.9 in \cite{Buchanan_2022}.) Then by Theorems  \ref{Bal type 1 B=n} and \ref{Bal type 2 B=n}, we can say that 
\begin{align*}
    B_D &\leq B_{D'} + m \\
    & = n(D') + m \\
    &\leq n(D) + 2m.
\end{align*}

\end{proof}

\begin{corollary}
\label{bound on B-circles Oddly Bur}
    Let $D$ be an Oddly Burdened diagram of type $2$. Let $B_D$ be the number of $B$-circles, let $n(D)$ be the number of link components, and let $m$ be the burdening number. Then 
    $$B_D \leq n(D) + 2(m + 1).$$
\end{corollary}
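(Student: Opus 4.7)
The plan is to mirror the proof of Corollary \ref{bound on B-circles Bur} (the Burdened case), but replace the use of Theorems \ref{Bal type 1 B=n} and \ref{Bal type 2 B=n} with the weaker Theorem \ref{Oddly Bal B almost equal n}, which allows a discrepancy of $\pm 2$ between the number of $B$-circles and the number of link components. This is exactly where the extra $+2$ in the bound $n(D)+2(m+1)$ comes from.

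First I would invoke the definition of burdening number for an Oddly Burdened diagram of type $2$: by definition, we can smooth $m$ crossings of $D$ to obtain an Oddly Balanced diagram $D'$ of type $2$. (Note that we must land in the Oddly Balanced case, not the Balanced case, since the parity of the number of cycle edges in the reduced $A$-state graph is preserved under smoothing of crossings that are not part of cycles; this is built into the definition of $m$ for Oddly Burdened diagrams.) Then, as in Corollary \ref{bound on B-circles Bur}, each of the $m$ smoothings changes $B_D$ and $n(D)$ by $\pm 1$, so applying Proposition 2.9 of \cite{Buchanan_2022} gives
\[
B_D \leq B_{D'} + m \qquad \text{and} \qquad n(D') \leq n(D) + m.
\]

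Next I would apply Theorem \ref{Oddly Bal B almost equal n} to the Oddly Balanced diagram $D'$: we have either $B_{D'} = n(D')$ or $B_{D'} = n(D') \pm 2$, so in any case
\[
B_{D'} \leq n(D') + 2.
\]
Combining these three inequalities yields
\[
B_D \leq B_{D'} + m \leq n(D') + 2 + m \leq n(D) + 2m + 2 = n(D) + 2(m+1),
\]
which is the desired bound.

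The only real obstacle is the bookkeeping point mentioned above: confirming that smoothing the $m$ burdening crossings of $D$ lands us in the Oddly Balanced (rather than Balanced) case of type $2$. This should follow directly from the definition of the burdening number for Oddly Burdened diagrams, since $m$ is defined precisely as the number of crossings that must be smoothed to produce a Balanced or Oddly Balanced diagram \emph{of the same type $r$}, and here $r=2$ with an odd number of cycle edges in the reduced $A$-state graph. Once this is clear, the rest of the argument is a straightforward inequality chase, and no new structural results about clasp moves are required.
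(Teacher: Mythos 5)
Your proof is correct and follows essentially the same route as the paper: smooth the $m$ burdening crossings to reach an Oddly Balanced diagram $D'$, apply Theorem \ref{Oddly Bal B almost equal n} to get $B_{D'} \leq n(D') + 2$, and combine with the $\pm 1$ change per smoothing exactly as in Corollary \ref{bound on B-circles Bur}. Your explicit statement of the inequality $n(D') \leq n(D) + m$ (the direction actually used in the chain) and the remark that the smoothing stays in the Oddly Balanced case are welcome clarifications, but they do not change the argument.
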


\begin{proof}

By the same argument as the proof of Corollary \ref{bound on B-circles Bur}, but using Theorem \ref{Oddly Bal B almost equal n} for Oddly Balanced diagrams, we can say that 
\begin{align*}
    B_D &\leq B_{D'} + m \\
    & = {\begin{cases}
        n(D') + 2 + m \\
         n(D') + m\\
         n(D') - 2 +m\\
    \end{cases}}\\
    &\leq n(D) + 2(m + 1).
\end{align*}

\end{proof}

\begin{corollary}\label{improved standard bound Bur} \label{improved standard bound Oddly Bur}
    For any Burdened type $0, 1, $ or $2$ diagram $D$ of a link $L$ with $n$ link components and burdening number $m$,
    $$\max \deg V_L \leq c(D) + \frac{n-1}{2} + m.$$ 

    For any Oddly Burdened type $2$ diagram $D$,
    $$\max \deg V_L \leq c(D) + \frac{n-1}{2} + m + 1.$$ 
\end{corollary}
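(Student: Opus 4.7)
The plan is to combine the standard upper bound on $\max \deg V_L$ coming from the $B$-state contribution (equation \ref{standard bound}) with the bounds on $B_D$ established in Corollaries \ref{bound on B-circles Bur} and \ref{bound on B-circles Oddly Bur}. All the hard work -- showing that clasp moves preserve the number of $B$-circles and link components, and that Balanced/Oddly Balanced diagrams of each type satisfy $B = n$ (resp.\ $B = n$ or $n \pm 2$) -- has already been done in Theorems \ref{Bal type 1 B=n}, \ref{Bal type 2 B=n}, and \ref{Oddly Bal B almost equal n}, which in turn feed into the $B$-circle bounds for Burdened and Oddly Burdened diagrams.

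For the Burdened case, I would start from
$$\max \deg V_L \leq c(D) + \frac{B_D - 1}{2}$$
and then apply Corollary \ref{bound on B-circles Bur} (or the analogous trivial statement for type $0$, which follows from Theorem \ref{B=n for Balanced type 0} together with the smoothing inequality $B_D \leq B_{D'} + m$ used in its proof), yielding $B_D \leq n + 2m$. Substituting gives
$$\max \deg V_L \leq c(D) + \frac{n + 2m - 1}{2} = c(D) + \frac{n-1}{2} + m,$$
which is the first claim.

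For the Oddly Burdened type $2$ case, the same calculation applies, but now using the looser bound $B_D \leq n + 2(m+1)$ from Corollary \ref{bound on B-circles Oddly Bur}. Substituting into \ref{standard bound} gives
$$\max \deg V_L \leq c(D) + \frac{n + 2(m+1) - 1}{2} = c(D) + \frac{n-1}{2} + m + 1,$$
as desired.

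There is no real obstacle here: the statement is essentially an arithmetic repackaging of the $B$-circle bounds once the standard Kauffman-state upper bound \ref{standard bound} is in hand. The only thing to be slightly careful about is to include the type $0$ case in the first statement (which was not explicitly recorded in Corollary \ref{bound on B-circles Bur}), but this is immediate since a Burdened type $0$ diagram smooths in $m$ crossings to a Balanced type $0$ diagram $D'$, for which $B_{D'} = n(D')$ by Theorem \ref{B=n for Balanced type 0}, and then the same smoothing inequality $B_D \leq B_{D'} + m$, $n(D') \leq n(D) + m$ (Proposition 2.9 of \cite{Buchanan_2022}) gives $B_D \leq n(D) + 2m$ exactly as in the proof of Corollary \ref{bound on B-circles Bur}.
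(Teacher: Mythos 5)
Your proof is correct and follows exactly the paper's argument: substitute the $B$-circle bounds of Corollaries \ref{bound on B-circles Bur} and \ref{bound on B-circles Oddly Bur} into the standard bound \ref{standard bound} and simplify. Your remark about handling the type $0$ case separately (since Corollary \ref{bound on B-circles Bur} is stated only for types $1$ and $2$) is a fair point of care that the paper glosses over, but it does not change the approach.
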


\begin{proof}
    For Burdened diagrams, the follows directly from \ref{standard bound} and Corollary \ref{bound on B-circles Bur}: 

    $$ \max \deg V_L \leq c(D) + \frac{B_D-1}{2}  \leq c(D) + \frac{n + 2m - 1}{2} = c(D) + \frac{n-1}{2}+m. $$

For Oddly Burdened diagrams, the result follows directly from \ref{standard bound} and Corollary \ref{bound on B-circles Oddly Bur}: 

    $$ \max \deg V_L \leq c(D) + \frac{B_D-1}{2}  \leq c(D) + \frac{n + 2(m +1) - 1}{2} = c(D) + \frac{n-1}{2}+m+1. $$
\end{proof}

We started off with the standard bound on the maximum degree of the Jones polynomial of a link with positive diagram $D$: $\max \deg V \leq c(D) +\frac{B_D - 1}{2}$ (\ref{standard bound}). For links with Balanced diagrams, we now have replaced this diagram-dependent quantity $B_D$ with the diagram-independent $n$. But we still have the diagram-dependent quantities $c(D)$ and $m$ to contend with. In \cite{Buchanan_2022}, we found that in every Burdened diagram of type $0$, we can express the burdening number as $m= 4\min \deg V - c(D)$. This allowed us to replace $c(D)$ and $m$ with the diagram-independent quanitity $4\min \deg V$ and find a bound on the maximum degree of the Jones polynomial for fibered positive links. 

In the following subsections, we find similar expressions for the burdening number of Burdened diagrams of type $1$, Burdened diagrams of type $2$, and Oddly Burdened diagrams of type $2$. We will see that these expressions allow us to replace $c(D)$ and $m$ in the bound given by Corollary \ref{improved standard bound Bur} with diagram-independent quantities.

\subsection{Burdened Diagrams of Type $1$}

\begin{proposition}\label{m for coeff pm 1}
Let $D$ be a $k$-Burdened diagram of a link $L$. Then the burdening number $m$ (the number of crossings that must be smoothed to transform $D$ into a $k$-Balanced diagram) can be expressed as 
    $$m = 4 \min \deg V_{L} - c(D) + k -2.$$
\end{proposition}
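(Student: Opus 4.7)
The plan is to translate the identity through equation \ref{min degree pos}, reducing it to a purely diagrammatic identity, and then derive this identity from the structural constraints of a $k$-Balanced diagram. Since \ref{min degree pos} gives $\min\deg V_L = (c(D) - A_D + 1)/2$ for any positive diagram, substitution shows that the claim is equivalent to
$$m = c(D) - 2 A_D + k,$$
so it suffices to establish this diagrammatic reformulation.

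For this, I would let $D'$ be the $k$-Balanced diagram of type $1$ obtained from $D$ by smoothing the $m$ burdening crossings, and establish two facts about the relationship between $D$ and $D'$. First, $A_{D'} = A_D$: in a positive diagram the $A$-smoothing of a crossing is the Seifert smoothing, which is already implicit in the formation of the $A$-state, so smoothing one crossing preserves both the positivity of the diagram and the arrangement of $A$-circles, and iterating over all $m$ smoothings gives equality. Second, $c(D') = 2 A_{D'} - k$: the reduced $A$-state graph of $D'$ is a connected planar graph with exactly one interior face and (by Proposition \ref{even cycles} together with the definition of "reduced") has no loops or multi-edges, so Euler's formula yields that its edge count equals its vertex count $A_{D'}$. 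By Definition \ref{Balanced type 1 def}, the $k$ edges bounding the unique hole are cycle edges contributing one crossing each to $D'$, while the remaining $A_{D'} - k$ cut edges contribute two crossings each, giving $c(D') = k + 2(A_{D'} - k) = 2 A_{D'} - k$.

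Combining these two facts with the identity $c(D) = c(D') + m$ (smoothing $m$ crossings removes exactly $m$ crossings) yields $c(D) - m = 2 A_D - k$, which is precisely the required reformulation. I do not expect a significant obstacle; the content of the proposition is really the structural rigidity of Balanced type $1$ diagrams (the crossing count is determined by the number of $A$-circles together with the cycle length $k$), and the plan simply translates this rigidity through \ref{min degree pos}. The one subtle point is ensuring that the $A$-smoothing operation preserves both positivity and the $A$-state structure, which holds essentially by construction in positive diagrams.
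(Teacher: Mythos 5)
Your proposal is correct and follows essentially the same route as the paper: reduce to the diagrammatic identity $m = c(D) - 2A_D + k$ via equation \ref{min degree pos}, establish $c(D') = 2A_{D'} - k$ for the $k$-Balanced diagram $D'$ by counting one crossing per cycle edge and two per cut edge in a unicyclic reduced $A$-state graph with $A_{D'}$ edges, and then use $c(D) = c(D') + m$ together with $A_D = A_{D'}$. The only cosmetic difference is that you invoke Euler's formula where the paper simply observes that a connected graph with one cycle has as many edges as vertices.
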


\begin{proof}

We first consider a $k$-Balanced diagram $D$. Let $G$ be the $A$-state graph of a $k$-Balanced diagram $D$ and $G'$ be the reduced $A$-state graph of $D$. Since $D$ is $k$-Balanced, there is a hole in $G'$ that is bounded by $k$ edges. Since $G'$ is a graph with one cycle, the number of edges in $G'$ is $A_{D}-1 +1 = A_{D}$. By definition of Balanced, every edge in $G'$ that is part of the cycle corresponds to one edge in $G$, and any other edge in $G'$ corresponds to two edges in $G$. Thus the number of edges in $G$ is $k + 2(A_{D}-k)=2A_{D} - k$. As the number of crossings in $D$ is equal to the number of edges in $G$, this means that 
\begin{equation}\label{number of crossings in k-Balanced}
    c(D) = 2A_{D} - k.
\end{equation}

Now, let $D$ be a $k$-Burdened diagram with $m$ crossings that must be smoothed to obtain $k$-Balanced $D'$. 
As such, we know that $c(D) = c(D')+m$. Since smoothing those crossings does not change the number of $A$-circles, we also know that $A_D = A_{D'}$. It follows from \ref{number of crossings in k-Balanced} that 
\begin{equation}\label{number of crossings in k-Burdened}
    c(D)=2A_{D} - k + m. 
\end{equation}

Since $D$ is a positive diagram, we have from \ref{min degree pos} that $4\min \deg V_L = 2(c(D) - A_D + 1)$. It follows from \ref{number of crossings in k-Burdened} that 
\begin{align*}
    m & = c(D) - 2A_D + k \\
    & = 4\min \deg V_{L} - c(D) + k -2.
\end{align*}
\end{proof}

Putting this information into the bound in Corollary \ref{improved standard bound Bur} will eradicate $c(D)$ and $m$, but will introduce another diagram-dependent quantity: $k$. However, in section \ref{lead conway coeff coeff 1 section} we prove that

\begin{lemma*}[$\mathbf{\ref{lead coeff k-Bur}}$]
    For a link $L$ with a $k$-Burdened diagram $D$ and Conway polynomial $\nabla_L$,  $$\lead \coeff \nabla_L = \frac{k}{2}.$$
\end{lemma*}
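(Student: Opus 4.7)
The plan is to reduce the lemma to the case of $k$-Balanced diagrams first, and then compute the leading coefficient of $\nabla_L$ in that case from the Seifert surface produced by Seifert's algorithm. The key input will be the fact that, since $D$ is a positive diagram, this Seifert surface $F_D$ realizes the Seifert genus of $L$ (so the degree of $\nabla_L$ is exactly $\beta_1(F_D)-(n-1)$) and its Seifert form can be read off directly from the structure of the reduced $A$-state graph $G_D'$.

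For a $k$-Balanced diagram of type $1$, $F_D$ decomposes as $A_D$ disks joined by $k$ single bands (one per cycle edge of $G_D'$) and $A_D - k$ pairs of parallel bands (one pair per tree edge of $G_D'$). I would choose a basis for $H_1(F_D)$ consisting of one cycle class $\gamma$ winding once around the $k$-cycle of $G_D'$, together with one handle class for each pair of parallel bands, defined by running up one band of the pair and back down the other. In this basis the Seifert matrix acquires a block-diagonal form, with each parallel pair contributing a standard $2 \times 2$ ``handle'' block of determinant $1$, while $\gamma$ occupies a $1\times 1$ block whose entry records half the number of positive crossings along the cycle, namely $k/2$. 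A direct computation of $\det(t^{1/2}V - t^{-1/2}V^T)$ then shows that the top-degree coefficient of $\nabla_L(z)$ is precisely $k/2$, matching (up to sign) the linking number of the two boundary components of the annular neighborhood of $\gamma$.

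To extend this to $k$-Burdened diagrams, I would induct on the burdening number $m$. Each burdening crossing corresponds to an extra edge in the $A$-state graph beyond what is required to be Balanced, and so contributes an additional pair (or larger multiplet) of parallel bands to $F_D$. The resulting new basis element fits into another standard handle block of the Seifert matrix, which is decoupled from $\gamma$ in the same way as in the Balanced case. Concretely, smoothing one burdening crossing produces a $k$-Burdened diagram $L_0$ with burdening number $m-1$ whose Conway polynomial has strictly lower degree than $\nabla_L$; combining this with the skein relation $\nabla(L_+) = \nabla(L_-) + z \nabla(L_0)$ applied at the burdening crossing shows inductively that the leading coefficient of $\nabla_L$ is unchanged by smoothing burdening crossings, and hence equals $k/2$ by Step 1.

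The main obstacle is the bookkeeping in Step 1: I need to track orientations carefully to verify that the handle classes really do contribute standard symplectic blocks and that their linking with the cycle class $\gamma$ vanishes. The analogous issue in Step 2 is controlling the $\nabla(L_-)$ term of the skein relation when the crossing being flipped is burdening, which may require a second application of the skein move that uses the multi-edge structure of burdening crossings to return to a positive diagram.
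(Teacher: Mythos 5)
Your route---reading the leading coefficient off as the determinant of a Seifert matrix organized by the reduced $A$-state graph---is genuinely different from the paper's, which never touches Seifert matrices: the paper smooths away all crossings parallel to existing ones and contracts all cut edges (Lemmas \ref{smooth away, keep lead coeff} and \ref{contract cut edges, same result}), reducing any $k$-Burdened diagram to the standard antiparallel diagram of $T(2,k)$, whose Conway polynomial $\frac{k}{2}z$ follows from a one-line skein induction (Proposition \ref{torus links conway polynomial}). Your Step 1 is viable in outline: for the bare $k$-cycle the surface is an annulus whose core has framing $\pm k/2$, each tree edge adds one generator, and the coefficient of $z^{\beta_1(F_D)}$ in $\nabla_L$ is $\det V$, which is the leading coefficient once one knows $\deg\nabla_L = \beta_1(F_D)$ (Cromwell, Proposition \ref{Cromwell pos Conway degree}). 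But two details are off as written: a parallel pair of bands carries \emph{one} basis class (your own description says so), hence a $1\times 1$ block with entry $\pm 1$ --- a positive Hopf band --- not a ``standard $2\times 2$ handle block of determinant $1$''; and the degree of $\nabla_L$ is $\beta_1(F_D)=1-\chi(F_D)$, not $\beta_1(F_D)-(n-1)$ (test this on the Hopf link). The block-triangularity of $V$ in your basis, and the sign (Cromwell's positivity of $\nabla$ for positive links, Proposition \ref{pos and almost pos links have pos conway}, is what forces $+k/2$ rather than $-k/2$), still need the orientation bookkeeping you defer.

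The genuine gap is in Step 2. The skein relation at a burdening crossing gives $\nabla(L_+)=\nabla(L_-)+z\nabla(L_0)$, and since $\deg\bigl(z\nabla(L_0)\bigr)=\deg\nabla(L_+)$ automatically, the induction lives or dies on showing $\deg\nabla(L_-)<\deg\nabla(L_+)$. You flag this but do not supply it, and it is not elementary: it is exactly Proposition \ref{another crossing doesnt affect conway degree} of the paper, whose proof routes through the Jones skein relation together with Stoimenow's theorem that $\max\deg\nabla = 2\min\deg V$ for almost-positive links (equivalently, his Lemma \ref{stoimenow's lemma} that a type II almost-positive diagram overshoots the genus by one). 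A ``second application of the skein move'' will not produce this inequality; you need either that input on almost-positive links or a direct genus bound for $L_-$. Alternatively, you could bypass the skein induction entirely by absorbing the burdening crossings into the Seifert-matrix computation of Step 1 --- each extra parallel band contributes one more $\pm 1$ diagonal entry and leaves $\det V$ equal to $\pm k/2$ --- which is where the first sentence of your Step 2 was already heading and is the cleaner way to finish within your framework.
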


So, we can replace the diagram-dependent quantity $k$ with the diagram-dependent quanitity $2\lead\coeff\nabla_L$.

\begin{theorem} \label{V_1 = 1 bound}
    Let $D$ be a Burdened diagram of type $1$. Then 

    $$\max\deg V_D \leq 4\min \deg V_D + \frac{n-1}{2} + 2\lead \coeff \nabla_D - 2,$$ where $V_D$ is the Jones polynomial and $\nabla_D$ is the Conway polynomial.
\end{theorem}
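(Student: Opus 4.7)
The plan is to combine three results from earlier in the paper into a short chain of inequalities that eliminates the diagram-dependent quantities $c(D)$, $m$, and $k$ in favor of diagram-independent invariants. Since $D$ is a Burdened diagram of type $1$, its reduced $A$-state graph has exactly one hole, bounded by some (necessarily even, by Proposition \ref{even cycles}) number of edges $k \geq 4$. So $D$ is $k$-Burdened for this unique $k$.

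First, I would invoke Corollary \ref{improved standard bound Bur} to obtain
$$\max \deg V_D \leq c(D) + \frac{n-1}{2} + m,$$
where $m$ is the burdening number of $D$. This is the point where the structural work of Section~4 pays off: the inequality $B_D \leq n + 2m$ that feeds Corollary~\ref{improved standard bound Bur} depends on the clasp-move machinery and on Theorem~\ref{Bal type 1 B=n}.

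Next, I would apply Proposition~\ref{m for coeff pm 1} to the $k$-Burdened diagram $D$, which gives the explicit formula
$$m = 4\min \deg V_D - c(D) + k - 2.$$
Substituting this into the previous display makes $c(D)$ cancel, leaving
$$\max \deg V_D \leq 4\min \deg V_D + \frac{n-1}{2} + k - 2.$$

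Finally, I would invoke Lemma~\ref{lead coeff k-Bur} (proved in the subsequent subsection on the leading Conway coefficient) to replace $k$ with $2\lead\coeff \nabla_D$, yielding exactly the bound claimed. The computation itself is a two-line substitution once all three inputs are in hand; the serious difficulty is upstream, in Lemma~\ref{lead coeff k-Bur}, which requires identifying the combinatorial quantity $k/2$ with the leading coefficient of the Conway polynomial. That is the step I expect to be the main obstacle for the paper as a whole, but for the present theorem it can be treated as a black box.
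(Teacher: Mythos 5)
Your proposal is correct and follows exactly the same route as the paper's proof: the standard bound refined via Corollary \ref{improved standard bound Bur}, followed by substitution of Proposition \ref{m for coeff pm 1} to cancel $c(D)$, and Lemma \ref{lead coeff k-Bur} to convert $k$ into $2\lead\coeff\nabla_D$. Nothing is missing.
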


\begin{proof}

    Let $D$ be a Balanced type $1$ diagram. Then $D$ is a $k$-Balanced diagram for some $k$, we can improve our standard bound \ref{standard bound} as follows: 

       \begin{align*}
        \max \deg V_D &\leq c(D) + \frac{B_D - 1}{2} &&\text{ (by \ref{standard bound})}\\
        & = c(D) + \frac{n-1}{2} + m &&\text{ (by Corollary \ref{improved standard bound Bur})}\\
        & = c(D) + \frac{n-1}{2}+ 4\min \deg V_L - c(D) + k - 2 &&\text{ (by Proposition \ref{m for coeff pm 1})}\\
        & = 4\min \deg V_L + \frac{n-1}{2} + 2\lead \coeff \nabla_L - 2 &&\text{ (by Lemma \ref{lead coeff k-Bur})}.
    \end{align*}
\end{proof}

\subsection{Burdened (and Oddly Burdened) Diagrams of Type $2$}

This section finds a similar bound on the maximum degree of the Jones polynomial for Burdened (and Oddly Burdened) diagrams of type $2$. Using our notation introduced earlier, we are considering $(k_1, k_2)$-Burdened (and Oddly Burdened) diagrams. 

\begin{proposition}\label{m for coeff 2}
    Let $D$ be a $(k_1, k_2)-$Burdened diagram. Then $m$, the number of crossings that must be smoothed to transform $D$ into a $(k_1,k_2)-$Balanced diagram, can be expressed as 
    $$m = 4\min \deg V_L - c(D) + (k_1 + k_2 -x -4),$$ 
    where $x$ is the number of of edges shared by the two holes in the reduced $A$-state graph of $D$. 
\end{proposition}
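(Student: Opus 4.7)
The plan is to follow the template of Proposition~\ref{m for coeff 1}: first determine $c(D)$ for a Balanced diagram of the relevant type purely in terms of $A_D$ and the combinatorial data $(k_1, k_2, x)$, then account for the extra $m$ burdening crossings, and finally eliminate $A_D$ using the positive-diagram identity~\ref{min degree pos}.

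For the Balanced case, let $G'$ denote the reduced $A$-state graph of $D$. It is a connected planar graph with $A_D$ vertices and exactly two interior faces (the two holes), so Euler's formula gives $|E(G')| = A_D + 1$. Every non-bridge edge of $G'$ lies on the boundary of at least one of the two holes: such an edge either borders exactly one of them (contributing to $k_1 - x$ or $k_2 - x$) or borders both (contributing to $x$). Counting without double-counting gives $k_1 + k_2 - x$ non-bridge edges and therefore $A_D + 1 - (k_1 + k_2 - x)$ bridges. By definition of Balanced type $2$, every non-bridge edge of $G'$ corresponds to exactly one crossing of $D$, while every bridge corresponds to exactly two, so
\[
c(D) \;=\; (k_1 + k_2 - x) \;+\; 2\bigl(A_D + 1 - (k_1 + k_2 - x)\bigr) \;=\; 2A_D - (k_1 + k_2) + x + 2.
\]

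Next I would pass from the Balanced case to the Burdened case exactly as in Proposition~\ref{m for coeff 1}. Smoothing the $m$ burdening crossings of $D$ yields a $(k_1, k_2)$-Balanced diagram $D'$; since smoothing does not alter the $A$-circles, $A_D = A_{D'}$, while $c(D) = c(D') + m$. Applying the displayed identity to $D'$ therefore gives
\[
c(D) \;=\; 2A_D - (k_1 + k_2) + x + 2 + m.
\]
Finally, because $D$ is positive, identity~\ref{min degree pos} yields $A_D = c(D) - 2\min\deg V_L + 1$; substituting this into the line above and solving for $m$ produces the claimed formula $m = 4\min\deg V_L - c(D) + (k_1 + k_2 - x - 4)$.

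The only subtlety is the edge count in $G'$: one must check that in a connected planar graph with exactly two interior faces, every non-bridge edge is on the boundary of some hole, and that shared boundary edges are counted with multiplicity one (not two). This is a short planarity argument and is where I expect the main (mild) obstacle to lie; once it is in hand, the remainder of the proof is substitution and bookkeeping.
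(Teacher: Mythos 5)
Your proposal is correct and follows essentially the same route as the paper's proof: count the $A_D+1$ edges of the reduced $A$-state graph, split them into the $k_1+k_2-x$ cycle edges (one crossing each) and the remaining cut edges (two crossings each) to get $c(D)=2(A_D+1)-(k_1+k_2-x)$ for the Balanced case, add $m$ for the Burdened case, and eliminate $A_D$ via the identity \ref{min degree pos}. The only cosmetic difference is that you justify the edge count via Euler's formula and flag the bridge/non-bridge dichotomy explicitly, which the paper takes for granted.
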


\begin{proof}
As in the Burdened type $1$ case, we begin by finding the number of crossings in a Balanced (or Oddly Balanced) diagram first.

    Let $D$ be a $(k_1, k_2)-$Balanced (or Oddly Balanced) diagram. Then its reduced $A$-state graph $G'$ contains exactly $2$ holes, where the total number of edges in $G'$ is to $A_D +1$ (the number of vertices of the graph plus one). One hole is bounded by $k_1$ edges, the other by $k_2$ edges, and there are $x$ edges shared between them (where $x\geq 0$). So, the number of edges that are part of a cycle is $k_1 + k_2 - x$, and the number of cut edges is $A_D + 1 - (k_1 + k_2 -x).$
     
     By definition of Balanced (and Oddly Balanced), every edge that is part of a cycle in the reduced $A$-state graph corresponds to one crossing in the diagram $D$, and every cut edge in $G'$ corresponds to exactly $2$ crossings in $D$. 

    Thus
    \begin{align*} \label{number of crossings in 2 Balanced}
   c(D) &= \hspace{27pt} 2(\#\text{ cut edges}) \hspace{27pt} + (\#\text{ edges involved in a cycle}) \\
   &= 2\Big(A_D + 1 - (k_1 + k_2 -x)\Big) + (k_1 + k_2 - x) \\
   &= 2(A_D+1) - (k_1 + k_2 - x). 
    \end{align*}

That is the number of crossings in a $(k_1, k_2)$-Balanced (or Oddly Balanced) diagram. Now, let $D$ be $(k_1, k_2)$-\textit{Burdened} diagram. We know it can be smoothed into some Balanced (or Oddly Balanced) diagram $D'$. Thus $A_{D'}=A_D$ and $c(D) = c(D') + m,$  where $m$ is the burdening number. Then

    \begin{equation}\label{number of crossings in 2 Burdened}
        c(D) = 2(A_D + 1) - (k_1 + k_2 -x) + m.
    \end{equation}

    Since $D$ is a positive diagram, we have from \ref{min degree pos} that $4 \min \deg V_L = 2(c(D) - A_D +1)$. It follows from \ref{number of crossings in 2 Burdened} that 

    \begin{align*}
        m & = c(D) - 2A_D - 2 + (k_1 + k_2 -x)\\
        & = 4\min \deg V_L - c(D) + (k_1 + k_2 - x) - 4. 
    \end{align*}

\end{proof}

\begin{corollary}
\label{bound with k's Bur and Oddly Bur}
   Let $D$ be a $(k_1, k_2)-$Burdened diagram of with $n$ link components. Then 
 $$\max \deg V_D \leq 4\min \deg V_D + \frac{n-1}{2} + k_1 + k_2 - x - 4,$$ where $x$ is the number of boundary edges shared by both of the two holes in the reduced $A$-state graph of $D$. If $D$ is a $(k_1, k_2)$-Oddly Burdened diagram with $n$ link components, then 
 $$\max \deg V_D \leq 4\min \deg V_D + \frac{n-1}{2} + (k_1 + k_2 - x - 4) + 1.$$ 
\end{corollary}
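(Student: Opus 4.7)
The plan is to substitute the formula for the burdening number from Proposition~\ref{m for coeff 2} into the bound on $\max \deg V_D$ established in Corollary~\ref{improved standard bound Bur} (for the Burdened case) and Corollary~\ref{improved standard bound Oddly Bur} (for the Oddly Burdened case). The crossing count $c(D)$ appears with coefficient $+1$ in the first bound and with coefficient $-1$ in the formula for $m$, so the two occurrences will cancel exactly, eliminating the diagram-dependent quantity $c(D)$ and producing a bound purely in terms of $\min \deg V_D$, $n$, $k_1$, $k_2$, and $x$.

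For the Burdened case, I start with $\max \deg V_D \leq c(D) + \frac{n-1}{2} + m$ from Corollary~\ref{improved standard bound Bur} and substitute $m = 4\min \deg V_D - c(D) + (k_1 + k_2 - x - 4)$ from Proposition~\ref{m for coeff 2}. After cancellation of $c(D)$, I obtain precisely $\max \deg V_D \leq 4\min \deg V_D + \frac{n-1}{2} + k_1 + k_2 - x - 4$, which is the first inequality.

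For the Oddly Burdened case, I first observe that the formula from Proposition~\ref{m for coeff 2} extends to Oddly Burdened diagrams of type $2$ without modification. The derivation of that formula uses only that (i) each cycle edge in the reduced $A$-state graph corresponds to one crossing in $D$ while each cut edge corresponds to two, and (ii) smoothing preserves the number of $A$-circles. Both facts are built into the definition of Oddly Balanced (and hence Oddly Burdened) type $2$, and the proof of Proposition~\ref{m for coeff 2} already explicitly treats the Oddly Balanced intermediate diagram in its counting step. Substituting the resulting expression for $m$ into the bound $\max \deg V_D \leq c(D) + \frac{n-1}{2} + m + 1$ from Corollary~\ref{improved standard bound Oddly Bur} yields the second inequality after the $c(D)$ terms cancel.

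There is no real obstacle here; the combinatorial work was already done in proving Proposition~\ref{m for coeff 2} and the two auxiliary corollaries on $B$-circle counts. The only conceptual point to flag is the brief remark that Proposition~\ref{m for coeff 2} applies verbatim in the Oddly Burdened setting, so that the same algebraic substitution yields both parts of the corollary.
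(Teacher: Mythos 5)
Your proposal is correct and follows essentially the same route as the paper: the paper likewise combines the bound $\max\deg V_D \leq c(D) + \frac{n-1}{2} + m$ (obtained from the standard bound \ref{standard bound} together with Corollary \ref{bound on B-circles Bur}, i.e.\ exactly Corollary \ref{improved standard bound Bur}) with the expression for $m$ from Proposition \ref{m for coeff 2}, letting the $c(D)$ terms cancel, and handles the Oddly Burdened case by the same substitution with the extra $+1$. Your explicit remark that Proposition \ref{m for coeff 2} applies verbatim to Oddly Burdened diagrams is a worthwhile clarification, since the paper's statement of that proposition mentions only the Burdened case even though its proof already counts crossings for the Oddly Balanced intermediate diagram.
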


\begin{proof}

    Let $D$ be a $(k_1, k_2)$-Burdened diagram. Then inserting the information from the above Proposition \ref{m for coeff 2} and Lemma \ref{bound on B-circles Bur} into \ref{standard bound}, we obtain: 
       \begin{align*}
        \max \deg V_D &\leq c(D) + \frac{B_D - 1}{2}\\
        & \leq c(D) + \frac{n + 2m -1}{2} \text{(by Prop. \ref{bound on B-circles Bur})}\\
        & = c(D) + \frac{n-1}{2}+ 4\min \deg V_L - c(D) + k_1 + k_2 -x -4 \text{(by Prop. \ref{m for coeff 2})}\\
        & = 4\min \deg V_L + \frac{n-1}{2} + k_1 + k_2 - x - 4.
    \end{align*}
\end{proof}

If instead $D$ is a $(k_1, k_2)$-Oddly Burdened diagram, then Proposition \ref{bound on B-circles Oddly Bur} tells us that $B_D \leq 2(m+1)$ (instead of $B_D \leq 2m$ as in the Burdened case), and the result for Oddly Burdened follows from the same argument.

\begin{theorem}\label{lead coeff for Bur type 2}
    Let $D$ be a $(k_1, k_2)$-Burdened diagram with Conway polynomial $\nabla.$ Then 
    $$\lead \coeff \nabla = \frac{k_1k_2 -x^2}{4},$$ where $x$ is the number of edges that bound both of the holes in the reduced $A$-state graph.
\end{theorem}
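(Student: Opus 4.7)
The plan is to compute $\lead\coeff \nabla_L$ by first identifying it with $\det V$ for a suitably chosen Seifert matrix $V$, and then computing $\det V$ directly from the combinatorics of $D$. From the standard identity $\nabla_L(z) = \det(t^{1/2}V - t^{-1/2}V^T)$ with $z = t^{1/2}-t^{-1/2}$, the top-degree coefficient in $z$ is exactly $\det V$, so it suffices to exhibit such a matrix from the Seifert surface $\Sigma$ produced by Seifert's algorithm on the positive diagram $D$.

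I would pick a basis of $H_1(\Sigma)$ adapted to the $A$-state graph $G$ and its reduced version $G'$. For every pair of Seifert circles joined by $j\geq 2$ parallel bands (i.e.\ $j$ parallel crossings), introduce $j-1$ ``small cycles'' $\sigma_e$, each enclosing an adjacent pair of those bands. Then pick two ``thin cycles'' $\gamma_1, \gamma_2$, one going once around each of the two holes of $G'$, each chosen to pass through exactly one band per cycle-edge of $G'$. A count using the edge-counts from the proof of Proposition \ref{m for coeff 2} shows that the total number of basis elements is $c(D)-A_D+1 = \operatorname{rank} H_1(\Sigma)$.

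Organising the Seifert matrix $V$ by this basis, the small-cycle sub-block is block-triangular with $1$s on the diagonal (crossings between different pairs of Seifert circles do not interact, and within one pair the small cycles form a tridiagonal unipotent block), so the small-cycle block has determinant $1$. This is essentially the argument behind Lemma \ref{lead coeff k-Bur} and the Burdened type $0$ case from \cite{Buchanan_2022}. On the thin-cycle block, each of the $k_i$ positive crossings traversed by $\gamma_i$ contributes $1/2$ to its self-linking, giving $V_{ii}=k_i/2$; the $x$ crossings on shared cycle-edges contribute $V_{12}+V_{21}=x$, and after a symmetric choice of pushoffs one can take $V_{12}=V_{21}=x/2$. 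Off-block entries (between small and thin cycles) are integers and can be eliminated by adding appropriate integer multiples of small cycles to $\gamma_1$ and $\gamma_2$, which does not change $\det V$. Thus $V$ becomes block-diagonal, with determinant $1\cdot\det\begin{pmatrix} k_1/2 & x/2 \\ x/2 & k_2/2 \end{pmatrix} = (k_1k_2-x^2)/4$.

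The main obstacle will be the Burdened (as opposed to just Balanced) setting: when a cycle-edge of $G'$ carries more than one crossing, there is a choice of which parallel band each $\gamma_i$ passes through. I would resolve this by showing that any two choices of thin cycle differ by an integer combination of small cycles, so the $2\times 2$ thin-cycle block is well defined modulo the small-cycle subspace and depends only on $k_1$, $k_2$, and $x$. Extra care is also needed to verify the clean $V_{12}=V_{21}=x/2$ form via basis normalization and to track the signs in the Seifert pairing; here the parity constraint from Proposition \ref{even cycles} (every cycle of $G'$ has even length) is precisely what makes $k_1/2$, $k_2/2$, and $x/2$ all integers, as they must be since $V$ itself is integral.
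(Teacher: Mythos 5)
Your strategy is genuinely different from the paper's. The paper proves two reduction lemmas (Lemma \ref{smooth away, keep lead coeff}: crossings parallel to an existing one only multiply the leading term by $z$; Lemma \ref{contract cut edges, same result}: cut edges can be contracted), which reduce the computation to explicit model links --- a connected sum of twisted annulus boundaries when $x=0$ (Proposition \ref{lead coeff Bur type 2 x=0}) and the pretzel links $P(-p,-q,-r)$ with $p,q,r$ even when $x\geq 2$ (Lemma \ref{pretzel conway all evens}) --- whose Conway polynomials are then computed by the skein relation. You instead compute $\det V$ for a Seifert matrix directly; the identification $\lead\coeff\nabla=\det V$ is legitimate here because Proposition \ref{Cromwell pos Conway degree} guarantees the surface from Seifert's algorithm realizes $\deg\nabla$. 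This would give a uniform, case-free argument if completed, and you could shorten it substantially by first invoking the paper's two reduction lemmas, after which there are no small cycles and $V$ is literally your $2\times 2$ thin-cycle block.

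There is, however, one step that is not ``extra care'' but the crux, and it is asserted rather than proved: that ``after a symmetric choice of pushoffs one can take $V_{12}=V_{21}=x/2$.'' There is no such choice: the entries of $V$ are determined by the surface, their sum $V_{12}+V_{21}$ is (up to sign) the total twisting of the $x$ shared bands, and their difference $V_{12}-V_{21}$ is the algebraic intersection number $\gamma_1\cdot\gamma_2$ on $\Sigma$. What you must prove is $\gamma_1\cdot\gamma_2=0$, and this is precisely where Theorems \ref{lead coeff for Bur type 2} and \ref{lead coeff for type 2 Oddly Bur} diverge: if $\gamma_1\cdot\gamma_2=\pm1$ one gets $\det\begin{pmatrix} k_1/2 & (x+1)/2\\ (x-1)/2 & k_2/2\end{pmatrix}=(k_1k_2-x^2+1)/4$, the Oddly Burdened answer. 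Your integrality check does not close this ($V_{12}=x/2+1$, $V_{21}=x/2-1$ is integral for $x$ even), so you need an orientation/nesting argument at the two branch vertices of the theta-graph, in the spirit of Proposition \ref{even cycles}, to pin down the parity of $\gamma_1\cdot\gamma_2$. A second, lesser gap: when a thin cycle runs through a non-extremal band of a multi-edge it meets two small cycles, so the Schur complement $D-CA^{-1}B$ need not equal your naive thin block entrywise; either choose $\gamma_i$ through an extremal band and verify the correction vanishes, or eliminate the small cycles at the diagram level first via Lemma \ref{smooth away burdens, keep lead coeff} and Lemma \ref{contract cut edges, same result}.
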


\begin{theorem}\label{lead coeff for type 2 Oddly Bur}
    Let $D$ be a $(k_1, k_2)$-Oddly Burdened diagram with Conway polynomial $\nabla.$ Then 
    $$\lead \coeff \nabla = \frac{k_1k_2 -x^2+1}{4},$$ where $x$ is the number of edges that bound both of the holes in the reduced $A$-state graph.
\end{theorem}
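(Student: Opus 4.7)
The plan is to prove Theorem \ref{lead coeff for type 2 Oddly Bur} by mirroring the structure of the proofs of Theorem \ref{lead coeff for Bur type 2} and Lemma \ref{lead coeff k-Bur}: first reduce from Oddly Burdened to Oddly Balanced by a smoothing/skein argument, then reduce within the Oddly Balanced class by clasp moves down to a small base case. Throughout, I would track how $\lead\coeff\nabla_L$, $k_1$, $k_2$, and $x$ evolve, aiming to show that the claimed identity $4\lead\coeff\nabla_L = k_1 k_2 - x^2 + 1$ is preserved at every step and holds in the base case.

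For the Oddly Burdened $\to$ Oddly Balanced reduction, I would pick one of the $m$ extra crossings (a crossing corresponding to a duplicate edge in the $A$-state graph but not in the reduced $A$-state graph) and apply the Conway skein relation $\nabla(L_+) - \nabla(L_-) = z\nabla(L_0)$. The smoothed diagram $L_0$ remains positive with the same reduced $A$-state graph, hence the same $(k_1,k_2,x)$. A degree computation via $\max\deg\nabla_L = c(D)-s(D)+1$ for positive links shows that $\nabla(L_-)$ contributes strictly below the top degree of $\nabla(L_+)$, so $\lead\coeff\nabla(L_+) = \lead\coeff\nabla(L_0)$. Iterating this $m$ times reduces to the case $m=0$, i.e., Oddly Balanced.

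Within the Oddly Balanced class, I would invoke the clasp-reduction strategy from the proof of Theorem \ref{Oddly Bal B almost equal n}. The clasp moves there reduce a general $(k_1,k_2)$-Oddly Balanced diagram to a base case (for example, the $(4,4)$-Oddly Balanced diagram of $7_4$ shown in Figure \ref{fig:Oddly Balanced B not equal to n}, which has $x=1$ and $\nabla_{7_4}(z)=4z^2+1$, verifying the formula $(16-1+1)/4=4$). The key lemma I would need is that each clasp move alters $\lead\coeff\nabla_L$ by exactly the amount by which the combinatorial quantity $k_1k_2 - x^2$ changes. Establishing this compatibility is the heart of the argument and would be proved by applying the skein relation at the two new clasp crossings and bookkeeping the effect on the Seifert pairing of the canonical surface.

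The main obstacle will be precisely this tracking of $\lead\coeff\nabla_L$ under clasp moves. Unlike the number of $B$-circles, which is preserved exactly (Lemma \ref{clasp moves preserve B and n}), the leading Conway coefficient can genuinely change across a clasp move, and the challenge is to show the change matches the combinatorial prediction. The odd-parity shift $+1$ in the numerator, compared to the Burdened formula $(k_1k_2-x^2)/4$, should emerge from the single unpaired cycle edge whose signed contribution to the top-degree determinant of the Seifert pairing is not absorbed by the symmetric pairing of the remaining cycle edges; tracing this one extra term carefully through the clasp-move induction is the crux.
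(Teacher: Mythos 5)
Your first reduction (Oddly Burdened $\to$ Oddly Balanced, with the leading Conway coefficient preserved) is essentially the paper's Lemma \ref{smooth away burdens, keep lead coeff}, though note one imprecision: the diagram $L_-$ obtained by changing the extra crossing is almost-positive, not positive, so you cannot bound $\deg\nabla(L_-)$ by the formula $c(D)-s(D)+1$ directly; you need Stoimenow's results for almost-positive links (Proposition \ref{Stoimenow almost pos Conway degree min genus} and Lemma \ref{stoimenow's lemma}), which is how Proposition \ref{another crossing doesnt affect conway degree} is actually proved. The genuine gap is in your second stage. The entire content of the theorem is packed into your proposed ``key lemma'' that a clasp move changes $\lead\coeff\nabla$ by exactly the change in $(k_1k_2-x^2+1)/4$, and you offer no proof of it beyond the phrase ``bookkeeping the effect on the Seifert pairing.'' Unlike the number of $B$-circles, which clasp moves preserve on the nose (Lemma \ref{clasp moves preserve B and n}), a clasp move changes the link: it adds two crossings, raises $\deg\nabla$ by $2$, and changes the leading coefficient by a nonzero amount (for instance, in the $x\ge 5$ reduction it must drop by $(k_1+k_2-2x)/2$, a statement relating coefficients of two polynomials of different degrees). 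Verifying this in each of the several clasp configurations used in Theorem \ref{Oddly Bal B almost equal n} is not bookkeeping; it is a computation at least as long as the theorem itself, and without it the induction from your $7_4$ base case does not get off the ground.

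The paper avoids clasp moves here entirely. Lemma \ref{contract cut edges, same result} (together with Lemma \ref{smooth away, keep lead coeff}) shows that the leading Conway coefficient depends only on the reduced $A$-state graph with all cut edges contracted; for a $(k_1,k_2)$-Oddly Burdened diagram that contraction is a theta-graph with path lengths $p=k_1-x$, $q=x$, $r=k_2-x$, i.e.\ the reduced $A$-state graph of the pretzel link $P(-p,-q,-r)$ with $p,q,r$ odd. The proof then computes $\nabla(-p,-q,-r)=1+\frac{pq+pr+qr+1}{4}z^2$ in closed form by a skein induction down to the trefoil, and substitutes back. If you want to salvage your route, the cleanest fix is to replace the clasp-move induction by this contraction-to-a-model-link argument; otherwise you must state and prove the clasp-compatibility lemma, which is where all the difficulty lives.
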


The proof of these appears in section \ref{lead conway coeff coeff 2 section}.

\begin{lemma}\label{result for type 2}
    Let $D$ be a $(k_1, k_2)$-Burdened diagram or a $(k_1, k_2)$-Oddly Burdened diagram. Then 
    $$\max \deg V \leq 4 \min \deg V + \frac{n-1}{2} + \lead \coeff \nabla. $$
\end{lemma}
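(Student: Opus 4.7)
The plan is to substitute Corollary \ref{bound with k's Bur and Oddly Bur} together with Theorems \ref{lead coeff for Bur type 2} and \ref{lead coeff for type 2 Oddly Bur} into the desired inequality. Clearing denominators reduces the Burdened case to the polynomial inequality
$$(k_1-4)(k_2-4)\geq x(x-4),$$
and the Oddly Burdened case to
$$(k_1-4)(k_2-4)\geq x(x-4)+3.$$
Everything then comes down to verifying these two inequalities from the structural constraints on $k_1,k_2,x$ that are built into the definitions of Burdened and Oddly Burdened type $2$ diagrams.

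The Burdened case is immediate. Proposition \ref{even cycles} plus simplicity of the reduced $A$-state graph forces $k_1,k_2\geq 4$ even, the definition of Burdened type $2$ forces $x$ even, and $x<k_i$ for both $i$ (otherwise the two simple face cycles in the planar graph would coincide as edge sets, collapsing to a single hole). By parity this yields $k_i-4\geq x-2$. If $x\leq 2$ then $x(x-4)\leq 0\leq (k_1-4)(k_2-4)$; if $x\geq 4$ then $(k_1-4)(k_2-4)\geq (x-2)^2=x(x-4)+4>x(x-4)$. Combining with the above reduction gives the desired bound.

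For the Oddly Burdened case $x$ is odd and $k_i$ even, so $x\leq k_i-1$. The one delicate subcase is $k_1=k_2=x+1$: here the $x$ shared edges would form a path exhausting both cycles' $x+1$ vertices, and the unique non-shared edge in each $C_i$ would join the same two path endpoints, forcing $C_1=C_2$ by simplicity and contradicting the existence of two distinct interior faces in a planar embedding. Ruling this out, up to swapping indices either (i) $k_1=x+1$ with $k_2\geq x+3$, giving $(k_1-4)(k_2-4)\geq (x-3)(x-1)=x(x-4)+3$, or (ii) $k_1,k_2\geq x+3$, giving $(k_1-4)(k_2-4)\geq (x-1)^2\geq x(x-4)+3$ for $x\geq 1$; for the small values $x\in\{1,3\}$ the right-hand side is non-positive and the inequality is automatic. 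The main obstacle is this tight Oddly Burdened inequality: for $x\geq 5$ the algebra has no slack, and the forbidden configuration $k_1=k_2=x+1$ would violate the bound by $2x-6$, so the structural input (planarity and simplicity of the reduced $A$-state graph) is essential.
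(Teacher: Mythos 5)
Your proposal is correct and follows essentially the same route as the paper: substitute Corollary \ref{bound with k's Bur and Oddly Bur} together with Theorems \ref{lead coeff for Bur type 2} and \ref{lead coeff for type 2 Oddly Bur}, then verify an elementary quadratic inequality in $k_1,k_2,x$ by parity and casework. The paper instead sets $y=k_1-x$, $z=k_2-x$ and uses the symmetric factorization $(x-2)(y-2)+(x-2)(z-2)+(y-2)(z-2)$, excluding degenerate configurations via the equivalent observation that at most one of $x,y,z$ can equal $0$ (resp.\ $1$); your explicit ruling out of $(k_1,k_2)=(x+1,x+1)$ matches the paper's duplicate-edge argument.
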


\begin{proof}

\underline{Case 1: Burdened diagram}\\

Let $D$ be a $(k_1, k_2)$-Burdened diagram. 

By Corollary \ref{bound with k's Bur and Oddly Bur} and Theorem \ref{lead coeff for Bur type 2}, it suffices to show that 
\begin{equation}\label{suffices to show inequal Bur}
    k_1 + k_2 - x - 4 \leq \frac{k_1k_2 - x^2}{4}.
\end{equation}

To aid our computations, we make the following substitutions: let $y = k_1-x$ and $z = k_2 -x$. Observe that then $x,y,z$ are all even. We claim that 
\begin{align}\label{bur comp}
    0 &\leq  xy + xz + yz - 4(x+y+z) + 16\\
    & = (x-2)(y-2) + (x-2)(z-2) + (y-2)(z-2) + 4.\notag
\end{align} 

The equality on the second line is clear, and it remains to show the inequality on the first line. At most one of $x,y,z$ can be $0$. If $x=0$, then $y\geq 4$ and $z\geq 4$, so the right side of \ref{bur comp} is $yz - 4(y+z) + 16 = (y-4)(z-4) \geq 0$, as desired. If none of $x,y,z$ are $0$, then all are $\geq 2$, so the right side second line of \ref{bur comp} is at least $4$, and the inequality is satisfied. 

Inequality \ref{bur comp} is equivalent to 
\begin{equation}
    x+y+z-4 \leq \frac{xy+xz+yz}{4},
\end{equation} which, via our substitution, is exactly Inequality \ref{suffices to show inequal Bur}, and thus we have proved the Burdened case.\\

\underline{Case 2: Oddly Burdened diagram}\\

Let $D$ be a $(k_1, k_2)$-Oddly Burdened diagram. By Corollary \ref{bound with k's Bur and Oddly Bur}, it suffices to show that 
\begin{equation}\label{suffices to show inequal Oddly Bur 2}
    (k_1 + k_2 - x - 4) + 1 \leq  \frac{k_1k_2 - x^2 +1 }{4}.
\end{equation}

As before, we make the following substitutions: let $y = k_1-x$, and let $z = k_2 -x$. Observe that in this case, $x,y,z$ are all odd. We claim that 
\begin{align}\label{oddly bur comp}
    0 &\leq xy + xz + yz - 4(x+y+z) + 13\\
    & =(x-2)(y-2) + (x-2)(z-2) + (y-2)(z-2) + 1. \notag
\end{align} 

The equality on the bottom line is clear, and we must prove the inequality on the top. At most one of $x,y,z$ can be $1$, otherwise we would have a \say{hole} bounded in the reduced $A$-state graph that is by only two edges, which is impossible (this would mean the graph had duplicate edges). If $x=1$, then $y\geq 3$ and $z\geq 3$, so the right side of \ref{oddly bur comp} is $y + z + yz - 4(1+y+z) + 13 = (y-3)(z-3) \geq 0$, as desired. If none of $x,y,z$ are $1$, then all are $\geq 3$, so the right side of \ref{oddly bur comp} is at least $4$, and the inequality is satisfied.

Inequality \ref{oddly bur comp} is equivalent to 
\begin{equation}
    (x+y+z-4)+1 \leq \frac{xy+xz+yz + 1}{4},
\end{equation} which, via our substitution, is exactly Inequality \ref{suffices to show inequal Oddly Bur 2}, and thus we have proved the Oddly Burdened case. 

\end{proof}

\section{Conway Polynomial}

In this section, we gather whatever tools we can about positive and positive links, in preparation to find the leading Conway coefficient of a generic Burdened link. We build up resources here that will always allow us to \say{prune away} any leaves (or other foliage), and instead find that we can compute the leading Conway coefficient just from the parts of the diagram that correspond to cycle edges in the reduced $A$-state graph.

\begin{proposition}[Cromwell]\cite{Cromwell}\label{Cromwell pos Conway degree}
    Let $L$ be a positive link with positive diagram $D$. Then 
    $$1- \chi(D) = 1-\chi(L) = \max\deg \nabla_L = 2\min\deg V_L.$$
\end{proposition}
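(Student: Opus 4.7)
The proposition packages three separate equalities, and the plan is to handle each in turn, starting with the one that is essentially bookkeeping and ending with the one that carries the genuine content.

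First I would prove the easy equality $1-\chi(D)=2\min\deg V_L$ by direct computation. Applying Seifert's algorithm to $D$ produces a Seifert surface with $s(D)$ disks and $c(D)$ bands, hence $\chi(D)=s(D)-c(D)$. In a positive diagram, Seifert smoothings coincide with $A$-smoothings (this is the remark following the $A$-state definitions), so $s(D)=A_D$ and therefore
\[
1-\chi(D)=c(D)-A_D+1.
\]
Combining with Equation \ref{min degree pos}, which gives $\min\deg V_L=\tfrac{c(D)-A_D+1}{2}$ for any positive diagram, yields $1-\chi(D)=2\min\deg V_L$ immediately.

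Next, for $\chi(D)=\chi(L)$, the plan is to invoke the theorem (due to Cromwell, and independently extractable from the Bennequin inequality or from homogeneity arguments) that Seifert's algorithm applied to a positive diagram produces a minimal-genus Seifert surface. In other words, the surface built from $D$ realizes $\chi(L)$, so its Euler characteristic equals that of $L$. I would cite this rather than reprove it; this is the deepest input and would be the main obstacle if one tried to give a self-contained argument.

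Finally, for $1-\chi(L)=\max\deg\nabla_L$, I would use the standard upper bound $\max\deg\nabla_L\le 2g(L)+n-1=1-\chi(L)$ coming from the Seifert matrix presentation of $\nabla_L$ (the matrix has size $2g(L)+n-1$ so its determinant has degree at most $1-\chi(L)$). Equality for positive links follows because the leading coefficient of $\nabla_L$ is nonzero for positive links. The cleanest way to verify this nonvanishing is to use the fact (also due to Cromwell) that $\nabla_L$ has nonnegative coefficients with positive leading coefficient for positive links; alternatively one can observe that the Seifert form of a fiber-like positive Seifert surface is nondegenerate in top degree. Chaining the three equalities then gives the full statement.
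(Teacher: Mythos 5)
The paper offers no proof of this proposition; it is quoted verbatim as a theorem of Cromwell, so there is no internal argument to compare yours against. Judged on its own terms, your first equality is correct and complete: $\chi(D)=s(D)-c(D)$, $s(D)=A_D$ for positive diagrams, and Equation \ref{min degree pos} give $1-\chi(D)=c(D)-A_D+1=2\min\deg V_L$ with nothing left to check. Delegating $\chi(D)=\chi(L)$ to the theorem that Seifert's algorithm on a positive (or homogeneous) diagram produces a maximal-$\chi$ surface is also the standard and reasonable move.

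The gap is in your third step. The Seifert-matrix bound $\max\deg\nabla_L\le 1-\chi(L)$ is fine, but your argument that equality holds is circular as written: ``the leading coefficient of $\nabla_L$ is nonzero'' is true of any nonzero polynomial, and what actually needs to be shown is that the coefficient of $z^{1-\chi(L)}$ specifically is nonzero. Cromwell's positivity theorem (all coefficients of $\nabla_L$ are nonnegative) does not force the top-degree coefficient to be nonzero, and the alternative you offer --- nondegeneracy of the Seifert form ``in top degree'' for a ``fiber-like'' surface --- does not apply, since positive links need not be fibered and $\det V$ can a priori vanish. The nonvanishing of the $z^{1-\chi(L)}$ coefficient is precisely the hard content of Cromwell's theorem; the standard route is a skein induction on a positive diagram (at a positive crossing, $\nabla_+=\nabla_-+z\nabla_0$ with $D_0$ positive of smaller complexity and all contributions nonnegative, so the top term of $z\nabla_0$ survives), which is in fact the mechanism the paper itself uses later in Propositions \ref{torus links conway polynomial} and \ref{pretzel conway all evens} to compute leading coefficients. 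Either carry out that induction or cite Cromwell's degree theorem directly rather than his positivity theorem.
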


\begin{proposition}[Cromwell]\label{pos and almost pos links have pos conway}\cite{Cromwell}
    Positive links have positive Conway polynomials, and almost-positive links have positive Conway polynomials. 
\end{proposition}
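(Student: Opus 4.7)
The plan is to prove both parts together by induction using the Conway skein relation $\nabla(L_+)-\nabla(L_-)=z\nabla(L_0)$. Start with the positive case. Let $L$ have a positive diagram $D$, and pick any crossing $c$; since $c$ is positive, $L = L_+$ and the skein gives $\nabla(L)=\nabla(L_-)+z\nabla(L_0)$. The oriented smoothing $L_0$ inherits a positive diagram with one fewer crossing, so by the inductive hypothesis $\nabla(L_0)$ has nonnegative coefficients and a base case at zero crossings (where $L$ is a trivial link) closes that term. The crossing change $L_-$ produces a diagram with $c(D)-1$ positive crossings and one negative crossing — an almost-positive diagram. So the positive case rests on the almost-positive case, and we prove them in tandem.

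For the almost-positive case, let $L$ have an almost-positive diagram with unique negative crossing $c$. Applying the skein at $c$ with $L = L_-$ gives $\nabla(L)=\nabla(L_+)-z\nabla(L_0)$, where both $L_+$ (with $c$ switched) and $L_0$ (with $c$ smoothed) now admit positive diagrams. By the (already-handled) positive case, both $\nabla(L_+)$ and $\nabla(L_0)$ have nonnegative coefficients; the difficulty is that we are subtracting $z\nabla(L_0)$ from $\nabla(L_+)$, so coefficient-wise positivity does not follow automatically.

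To address this subtraction, I would first pin down the top degrees using Proposition \ref{Cromwell pos Conway degree}: both $\max\deg\nabla(L_+)$ and $\max\deg\nabla(L_0)$ are computed as $1-\chi$ of the Seifert surfaces built from Seifert's algorithm on positive diagrams. A Euler-characteristic count via Seifert circles and crossings shows that the contribution $z\nabla(L_0)$ has strictly smaller top degree than $\nabla(L_+)$, so the leading coefficients of $\nabla(L)$ are inherited from $\nabla(L_+)$ and are nonnegative. For the lower-degree coefficients, the plan is to find a state-sum or spanning-tree model for $\nabla$ — analogous to the Kauffman state-sum used later in the paper — in which each coefficient is computed directly as a sum of nonnegative terms indexed by states of the diagram, with the unique negative crossing contributing in a controlled, sign-consistent way.

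The \textbf{main obstacle} is clearly the subtraction in the almost-positive case: a single negative crossing introduces a signed term in the skein expansion that threatens positivity. The cleanest resolution would be to produce an explicit positive combinatorial formula for the Conway coefficients on almost-positive diagrams, bypassing the direct skein cancellation; failing that, one must carry out a delicate coefficient-by-coefficient comparison of $\nabla(L_+)$ with $z\nabla(L_0)$, tracked by how the Seifert surfaces of the two resolutions sit inside each other. Either route hinges on the same underlying fact — that the Seifert graph of an almost-positive diagram is close enough to that of a positive diagram for spanning-tree counts to behave additively — and I would expect the proof to proceed by making this comparison precise.
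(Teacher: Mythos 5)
First, a framing note: the paper offers no proof of this proposition --- it is imported verbatim from Cromwell with a citation --- so what you are really attempting is a reproof of Cromwell's theorem, and as such your proposal has genuine gaps rather than merely a different route.

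The first gap is that your mutual induction is circular. In the positive case you reduce $\nabla(L)$ to $\nabla(L_-)$ and $\nabla(L_0)$, where $L_-$ is almost-positive with the \emph{same} crossing number; in the almost-positive case you reduce $\nabla(L)$ to $\nabla(L_+)$ and $\nabla(L_0)$, where $L_+$ is positive with the \emph{same} crossing number. But if you switch a positive crossing of $D$ to get $D_-$ and then switch it back, you recover $D$ itself: the two skein relations you write down are the same equation read in opposite directions, so no induction measure decreases and the argument never bottoms out. Cromwell's actual argument needs an additional structural input (an induction on the Seifert graph together with a careful choice of which crossing to resolve) precisely to break this loop. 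The second gap is a concrete false step: you claim $z\nabla(L_0)$ has strictly smaller top degree than $\nabla(L_+)$. By Proposition \ref{Cromwell pos Conway degree}, $\max\deg\nabla(L_+)=c(D_+)-s(D_+)+1$ and $\max\deg\nabla(L_0)=\big(c(D_+)-1\big)-s(D_+)+1$, since smoothing removes one crossing and leaves the Seifert circles unchanged; hence $\max\deg\bigl(z\nabla(L_0)\bigr)=\max\deg\nabla(L_+)$. The two leading terms coincide and can cancel --- indeed the paper's own Proposition \ref{another crossing doesnt affect conway degree} and Corollary \ref{another crossing doesnt affect leading term} exploit exactly this cancellation in the type II case --- so the leading coefficients of $\nabla(L_-)$ are \emph{not} automatically inherited from $\nabla(L_+)$. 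Finally, the piece that would actually carry the proof (a manifestly positive state-sum or spanning-tree expansion of the Conway coefficients for positive and almost-positive diagrams) is named as a desideratum but never constructed; that construction is the entire content of the theorem. Given that the paper treats this as a black-box citation, the right move here is to cite Cromwell rather than to sketch an argument whose two load-bearing steps are, respectively, circular and false.
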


\begin{proposition}[Stoimenow, \cite{Stoimenow2014MinimalGA}]\label{Stoimenow almost pos Conway degree min genus}
    Let $L$ be an almost-positive link. Then 
    $$\max \deg \nabla_L = 2\min \deg V_L = 1-\chi(L)$$ 
\end{proposition}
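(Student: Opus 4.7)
My plan is to apply the Conway skein relation at the unique negative crossing of an almost-positive diagram $D$ of $L$, reducing the problem to the positive case handled by Proposition~\ref{Cromwell pos Conway degree}. Let $c$ be the negative crossing, let $D_+$ be obtained from $D$ by switching $c$ to positive, and let $D_0$ be the oriented smoothing at $c$. Since the oriented smoothing at a negative crossing is exactly what Seifert's algorithm does there, both $D_+$ and $D_0$ are positive diagrams, representing positive links $L_+$ and $L_0$ (any split summand produced by the smoothing would be handled separately by induction on crossing number). The natural Seifert surfaces satisfy $\chi(F_{D_+}) = s(D) - c(D) = \chi(F_D)$, because switching a crossing preserves Seifert circles and crossings, and $\chi(F_{D_0}) = s(D) - (c(D)-1) = \chi(F_D) + 1$, because smoothing $c$ removes a crossing but preserves the Seifert circles (they were already obtained from $D$ by this very smoothing).

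Applying Proposition~\ref{Cromwell pos Conway degree} to $L_+$ and $L_0$ gives $\max \deg \nabla_{L_+} = 1 - \chi(F_D)$ and $\max \deg \nabla_{L_0} = -\chi(F_D)$. Feeding these into the Conway skein relation
\begin{equation*}
\nabla_L \;=\; \nabla_{L_+} \;-\; z\,\nabla_{L_0}
\end{equation*}
shows that both terms on the right have degree $1 - \chi(F_D)$, so $\max \deg \nabla_L \leq 1 - \chi(F_D)$. Combined with the general inequality $\chi(L) \geq \chi(F_D)$ and the standard bound $\max \deg \nabla_L \leq 1 - \chi(L)$, the target equality will follow once I establish that $\max \deg \nabla_L$ is exactly $1 - \chi(F_D)$: this pinches $1 - \chi(F_D) \leq 1 - \chi(L) \leq 1 - \chi(F_D)$, forcing both $\chi(L) = \chi(F_D)$ and the desired formula simultaneously.

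The main obstacle is precisely the nonvanishing of the leading coefficient after this subtraction. Both $\nabla_{L_+}$ and $\nabla_{L_0}$ have positive leading coefficients by Proposition~\ref{pos and almost pos links have pos conway}, so in principle they could cancel. I would resolve this using the combinatorial description of the leading Conway coefficient of a positive link in terms of spanning structures of its Seifert graph, noting that the Seifert graphs of $D_+$ and $D_0$ differ only by a single local modification at $c$, and then directly computing that the two leading coefficients cannot coincide. Should they nonetheless coincide in some borderline case, the positivity of $\nabla_L$ itself (Proposition~\ref{pos and almost pos links have pos conway}) prevents unbounded cancellation, so a careful induction on crossing number or on the burdening-type complexity of $D$ pins down $\max \deg \nabla_L = 1 - \chi(L)$.

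The equality $2\min\deg V_L = 1 - \chi(L)$ is handled by the parallel Jones skein argument: the relation $t^{-1}V_{L_+} - t\,V_L = (t^{1/2} - t^{-1/2})V_{L_0}$ applied at the negative crossing, combined with $2\min\deg V_{L_\pm} = 1 - \chi(L_\pm)$ from Proposition~\ref{Cromwell pos Conway degree} and an analogous trailing-term analysis (where positivity of the Kauffman-bracket contribution from the $A$-state of a positive diagram plays the role that Proposition~\ref{pos and almost pos links have pos conway} played above), yields $2\min\deg V_L = 1 - \chi(L)$.
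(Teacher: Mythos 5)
First, note that the paper does not prove this proposition at all: it is quoted from Stoimenow \cite{Stoimenow2014MinimalGA} as a known result, so there is no internal proof to compare against. Judged on its own terms, your sketch has a genuine gap at its central step.

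Your strategy is to show that the leading terms of $\nabla_{L_+}$ and $z\nabla_{L_0}$ in the skein relation $\nabla_L=\nabla_{L_+}-z\nabla_{L_0}$ cannot cancel, so that $\max\deg\nabla_L=1-\chi(F_D)$, and then to pinch $\chi(L)=\chi(F_D)$. But this cancellation is not a borderline pathology to be ruled out --- it genuinely occurs, precisely when the almost-positive diagram is of type $II$ in the sense of Lemma \ref{stoimenow's lemma}, i.e.\ when some positive crossing connects the same pair of Seifert circles as the negative crossing $c$. In that case Corollary \ref{another crossing doesnt affect leading term} of this very paper states $\lead\term\nabla_{L_+}=z\,\lead\term\nabla_{L_0}$, the top terms cancel exactly, and by Lemma \ref{stoimenow's lemma} one has $\chi(L)=\chi(F_D)+2$, so $\max\deg\nabla_L=1-\chi(L)=-1-\chi(F_D)<1-\chi(F_D)$. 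Your pinching inequality $1-\chi(F_D)\leq 1-\chi(L)$ is then simply false, and the conclusion you are trying to force does not hold in the form you state it. (A concrete instance: the almost-positive diagrams in Figures \ref{fig:Knot coeff 1} and \ref{fig:Family coeff 0} illustrate both types; for type $II$ the Seifert surface of the diagram is not of minimal genus.) The same defect infects the Jones half of your argument: in type $II$ one has $\min\deg V_{L}=\min\deg V_{L_+}-1$, so the lowest-order terms in the Jones skein relation also cancel.

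The fallback you offer --- that positivity of $\nabla_L$ (Proposition \ref{pos and almost pos links have pos conway}) ``prevents unbounded cancellation'' --- does not close the gap. Positivity of the coefficients of $\nabla_L$ says nothing about \emph{how many} leading terms of $\nabla_{L_+}$ and $z\nabla_{L_0}$ cancel; to get $\max\deg\nabla_L=1-\chi(L)$ one needs both the case split of Lemma \ref{stoimenow's lemma} (to know what $\chi(L)$ actually is) and a lower bound showing the coefficient of $z^{1-\chi(L)}$ in $\nabla_L$ is nonzero, which is the real content of Stoimenow's theorem. Be aware also of a circularity hazard: the paper's Proposition \ref{another crossing doesnt affect conway degree}, which you would naturally reach for to analyze the cancellation, itself invokes Proposition \ref{Stoimenow almost pos Conway degree min genus} in its proof, so it cannot be used as an ingredient here. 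A correct argument must treat the type $I$ and type $II$ diagrams separately from the outset rather than attempting a uniform non-cancellation claim.
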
 

The following propositions and theorems involve considering two types of almost-positive diagrams: one type in which the negative crossing and a positive crossing both connect the same pair of Seifert circles, and another type in which no positive crossing connects the same pair of Seifert circles as the negative crossing. Discussion of these two separate situations appears in the work of Feller, Lewark, and Lobb (\cite{Feller_2022}, notions of \textit{parallel crossings} and \textit{type I and type II diagrams}); Ito and Stoimenow (\cite{Stoimenow, StI} notions of \textit{Seifert equivalent crossings} and \textit{type I and type II diagrams}, and \textit{good and bad crossings} and \textit{good successively $k$-almost positive diagrams}); and Tagami (\cite{Tagami_2014}). We recall that for positive crossings, performing an $A$-smoothing is the same as smoothing according to Seifert's algorithm, so in a positive the $A$-state circles are exactly the same as the Seifert circles.

\begin{lemma}[Stoimenow, \cite{StI}]\label{stoimenow's lemma}
    Let $L$ be a link represented by an almost-positive diagram $D$ with negative crossing $q$. If $D$ is of type $I$ (there is no other crossing $p$ which connects the same pair of Seifert circles as $q$), then $\chi(L) = \chi(D)$. If $D$ is of type $II$ (there is another crossing $p$ which connects the same pair of Seifert circles as $q$), then $\chi(L) - 2  = \chi(D)< \chi (L)$. 
\end{lemma}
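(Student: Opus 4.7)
The plan is to get $\chi(L) \geq \chi(D)$ for free by applying Seifert's algorithm to $D$, and then handle the remaining bounds case by case, using Propositions \ref{Cromwell pos Conway degree} and \ref{Stoimenow almost pos Conway degree min genus} as the main external inputs.

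For Type II, I would upgrade the Seifert surface directly. Let $p$ be a positive crossing sharing the same pair of Seifert circles $\{A,B\}$ as the negative crossing $q$. The bands of $p$ and $q$ in $F_D$ attach to the same pair of Seifert disks with opposite twists, so $F_D$ admits a compression along the obvious disk bounded by a curve that crosses $p$ and $q$ once each. Compression raises $\chi$ by $2$ and still bounds $L$, so $\chi(L) \geq \chi(D) + 2$. Equivalently, the two strands through $p$ and $q$ form a trivial $2$-braid, and a Reidemeister II move on them produces a positive diagram $D''$ for $L$ with $s(D'') = s(D)$ and $c(D'') = c(D)-2$; then Proposition \ref{Cromwell pos Conway degree} gives the matching upper bound $\chi(L) = \chi(D'') = \chi(D) + 2$.

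For Type I, I would apply the Conway skein relation at $q$ to write
$$\nabla_L \;=\; \nabla_{L_+} \;-\; z\,\nabla_{L_0},$$
where $L_+$ (replacing $q$ by a positive crossing) and $L_0$ (Seifert-smoothing $q$) are both positive links coming from the obvious sub-diagrams. Proposition \ref{Cromwell pos Conway degree} gives $\max\deg \nabla_{L_+} = 1 - \chi(D)$ and $\max\deg \nabla_{L_0} = -\chi(D)$, so each of the two summands can a priori contribute at top degree $1-\chi(D)$. Combined with Proposition \ref{Stoimenow almost pos Conway degree min genus}, it then suffices to verify that the two leading coefficients do not cancel, forcing $\max\deg\nabla_L = 1-\chi(D)$ and hence $\chi(L) = \chi(D)$.

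The main obstacle is precisely this non-cancellation step. The argument needs a combinatorial formula for the leading coefficient of the Conway polynomial of a positive link in terms of its (reduced) Seifert graph, and then a matching of the contributions of $L_+$ and $L_0$ edge-by-edge near the crossing $q$. The key point — and what genuinely separates the two cases — is that the term contributed by the edge dual to $q$ matches the corresponding contribution of $L_0$ exactly when a parallel positive crossing exists (Type II), whereas in Type I the absence of such a parallel edge makes the Seifert-graph structure at $q$ strictly different, so the leading coefficients necessarily differ. Once this dichotomy is in hand, Type I gives equality and Type II is consistent with (indeed forces) the strict drop already established geometrically.
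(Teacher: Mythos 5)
First, a framing point: the paper does not prove this lemma at all --- it is imported verbatim from Stoimenow (the citation \cite{StI}), so there is no in-paper argument to compare yours against. Judged on its own merits, your proposal correctly identifies the right architecture (Seifert's algorithm gives $\chi(L)\geq\chi(D)$ for free; the other direction comes from Conway-polynomial degree bounds via Propositions \ref{Cromwell pos Conway degree} and \ref{Stoimenow almost pos Conway degree min genus}, with the skein relation at $q$ and the type I/II dichotomy governing cancellation of leading terms --- this is exactly the mechanism the paper itself exploits in the Remark after the lemma and in Proposition \ref{another crossing doesnt affect conway degree}). But both cases have genuine gaps as written.

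In the Type II case, the step ``the two strands through $p$ and $q$ form a trivial $2$-braid, and a Reidemeister II move produces a positive diagram $D''$'' is false in general: $p$ and $q$ connect the same pair of Seifert circles, but they need not be adjacent along them --- other bands may attach to the arcs of those circles between $p$ and $q$, so no single R2 move is available and the ``obvious'' compressing disk bounded by the curve running through both bands may intersect the rest of $F_D$. Repairing this requires sliding the band of $q$ past the intervening bands (handle slides realized by ambient isotopy) before compressing, and without the diagram $D''$ you also lose your only justification for the matching inequality $\chi(L)\leq\chi(D)+2$; the polynomial route to that bound needs the coefficient of $z^{-1-\chi(D)}$ in $\nabla_L$ to survive, which you have not addressed. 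In the Type I case the situation is worse: you state yourself that the non-cancellation of the two equal-degree leading terms of $\nabla_{L_+}$ and $z\nabla_{L_0}$ is ``the main obstacle,'' and then you gesture at an unproved combinatorial formula for leading Conway coefficients of positive diagrams in terms of the Seifert graph rather than supplying one. Since that non-cancellation \emph{is} the entire content of the Type I assertion, the case is not proved. Finally, be aware of a circularity risk: Proposition \ref{Stoimenow almost pos Conway degree min genus} is, in the source literature, established alongside this very type I/II analysis, so leaning on it here is only formally legitimate because the paper treats both as independent black-box citations.
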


\begin{remark}
    We note that then the first case means (for a positive diagram $D_+$ related to an almost-positive diagram $D_-$ by one crossing change) that $2\min \deg V(L_+) = 1-\chi(L_+) = 1-\chi(D_+) = 1 - \chi(D_-) = 1-\chi(L_-)$, which (by Proposition \ref{Stoimenow almost pos Conway degree min genus} if the link is almost-positive, or by Proposition \ref{Cromwell pos Conway degree} if the link is positive) is equal to $2\min \deg V(L_-)$, so $\min \deg V(L_+) = \min \deg V(L_-)$. And in the other case, we have that $\min \deg V(L_+) = \min \deg V (L_-) + 1. $
\end{remark}

\begin{proposition}\label{another crossing doesnt affect conway degree}
    Let $D_+$ be a positive diagram with one distinguished crossing $q$, and let $D_-$ be the result of making $q$ negative. If there is another crossing in $D_+$ connecting the same two $A$-circles as $q$, then $$\deg\nabla_- < \deg \nabla_+.$$
\end{proposition}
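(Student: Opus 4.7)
Plan. The strategy is to convert the statement about Conway degrees into a statement about Euler characteristics, using Cromwell's identity $\deg \nabla_L = 1 - \chi(L)$ (Proposition~\ref{Cromwell pos Conway degree}) for $L_+$ and Stoimenow's version of the same identity (Proposition~\ref{Stoimenow almost pos Conway degree min genus}) for $L_-$, and then to feed this through Lemma~\ref{stoimenow's lemma}. So first I would write
$$\deg \nabla_+ \;=\; 1 - \chi(L_+) \;=\; 1 - \chi(D_+),$$
where the second equality uses that the canonical Seifert surface is Euler-characteristic minimizing in the positive case, and
$$\deg \nabla_- \;=\; 1 - \chi(L_-),$$
since $L_-$ is at worst almost-positive. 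All that remains is to compare $\chi(L_+)$ with $\chi(L_-)$.

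Next I would argue that $D_-$ is of type II, so that Lemma~\ref{stoimenow's lemma} gives $\chi(L_-) = \chi(D_-) + 2$. Write $C_1, C_2$ for the two $A$-circles of $D_+$ incident to $q$ (and to the hypothesized companion crossing $p$). In $D_+$ the Seifert smoothing coincides with the $A$-smoothing at every crossing, so the Seifert circles of $D_+$ are exactly its $A$-circles. In $D_-$ the Seifert smoothing agrees with the $A$-smoothing at every crossing \emph{except} $q$, where it is the opposite ($B$-)smoothing. A standard local argument about Kauffman smoothings (which appears in the appendix) shows that flipping the smoothing at a crossing whose two incident arcs lie in two distinct circles merges those two circles, so in the Seifert state of $D_-$ the circles $C_1$ and $C_2$ fuse into a single circle $C$. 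Both $q$ and $p$ then become loops at $C$, so $p$ connects the same (now degenerate) pair of Seifert circles as $q$ in $D_-$, verifying the type-II hypothesis.

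Finally I would compute the shift. Since $c(D_+) = c(D_-)$ and $s(D_-) = s(D_+) - 1$ by the merging just described,
$$\chi(D_-) \;=\; s(D_-) - c(D_-) \;=\; \chi(D_+) - 1.$$
Combining with Lemma~\ref{stoimenow's lemma} for $D_-$ and Cromwell for $D_+$:
$$\deg \nabla_+ - \deg \nabla_- \;=\; \chi(L_-) - \chi(L_+) \;=\; \bigl(\chi(D_-) + 2\bigr) - \chi(D_+) \;=\; 1,$$
so $\deg \nabla_- = \deg \nabla_+ - 1 < \deg \nabla_+$, as required.

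The main obstacle is the geometric step verifying that flipping the smoothing at $q$ actually \emph{merges} $C_1$ and $C_2$ (rather than producing the opposite change in circle count), and concurrently that both $p$ and $q$ end up as loops at the same Seifert circle of $D_-$. This is a purely local/combinatorial assertion about how the four strands at $q$ are glued by the rest of the diagram, and it is what secures both the type-II classification and the decrement $\chi(D_-) = \chi(D_+) - 1$; once it is in place, everything else is a short bookkeeping calculation using the two previously cited propositions and Stoimenow's lemma.
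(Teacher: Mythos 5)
There is a genuine error in the geometric step you yourself flag as the main obstacle. Seifert's algorithm resolves every crossing in the orientation-preserving way, and this resolution is determined by the orientations of the two strands alone, not by which strand passes over. Consequently a crossing change does not alter the Seifert state at all: the Seifert circles of $D_-$ are literally the same curves as those of $D_+$, so $s(D_-)=s(D_+)$ and hence $\chi(D_-)=\chi(D_+)$, not $\chi(D_+)-1$. (Your confusion comes from the relabelling: when $q$ becomes negative, the orientation-preserving smoothing at $q$ is now called the $B$-smoothing of the new crossing, but it is the same geometric smoothing as the $A$-smoothing of the old positive crossing; nothing ``flips.'') The circles $C_1$ and $C_2$ therefore do not merge, and your conclusion $\deg\nabla_-=\deg\nabla_+-1$ cannot be right for a second reason: $L_+$ and $L_-$ have the same number of components, so their Conway polynomials are supported in degrees of the same parity, and a drop of exactly $1$ is impossible. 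The type-II verification, on the other hand, needs no merging argument at all --- since the Seifert circles of $D_-$ coincide with the $A$-circles of $D_+$, the hypothesized crossing $p$ connects the same pair of Seifert circles as $q$ in $D_-$ by definition, so $D_-$ is of type II immediately.

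With that repair your route does close up, and it is genuinely different from (and shorter than) the paper's: you would get $\deg\nabla_+=1-\chi(D_+)$ from Proposition~\ref{Cromwell pos Conway degree}, $\deg\nabla_-=1-\chi(L_-)$ from Proposition~\ref{Stoimenow almost pos Conway degree min genus} (or from Proposition~\ref{Cromwell pos Conway degree} if $L_-$ happens to be positive), and then Lemma~\ref{stoimenow's lemma} in the type-II case gives $\chi(L_-)=\chi(D_-)+2=\chi(D_+)+2$, whence $\deg\nabla_-=\deg\nabla_+-2<\deg\nabla_+$. This matches the drop of $2$ that the paper obtains, but the paper gets there by a different mechanism: it runs the Kauffman-bracket/Jones skein relation $t^{-1}V_+-tV_-=(t^{1/2}-t^{-1/2})V_0$ on the leading terms, using that smoothing $q$ does not change the number of holes in the reduced $A$-state graph, to show $\min\deg V_-=\min\deg V_+-1$, and only then converts to Conway degrees via $\deg\nabla=2\min\deg V$. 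Your (corrected) Euler-characteristic argument avoids the Jones polynomial entirely; the paper's computation buys the extra information recorded in line \ref{maybe same degree terms} about the low-order coefficients of $V_-$, which it remarks on afterward.
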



\begin{proof}

Recall the Conway skein relation: $\nabla_+ - \nabla_- = z \nabla_0$. 

If $D_+$ is a positive diagram, and $D_-$ is the result of changing one crossing to be negative, and $D_0$ is the result of smoothing that crossing, then the skein relation and \ref{pos and almost pos links have pos conway} tells us that $$\deg \nabla_- \leq \deg \nabla_+.$$

Let $D_+$ be a positive diagram, $D_-$ the result of changing one crossing ($q$) to be negative, and $D_0$ the result of smoothing $q$. 

Since $D_+$ and $D_0$ are positive, we have that (where $n$ is the number of link componets in $D_+$, and $*=\min\deg V_+=\frac{c(D_+)-s(D_+)+1}{2}= 1- \chi(D_+)$, and $r$ is the absolute value of its second Jones coefficient):

\begin{itemize}
    \item $V_+ = (-1)^{n+1}\Big (t^* -(r) t^{*+1} + \dots \Big)$ where the rest of the terms are of higher degree 
    \item If another crossing in $D_+$ connects the same pair of Seifert circles as $q$, then smoothing $q$ does not change the number of holes in the reduced $A$-state graph and so 
    $V_0 = (-1)^n \Big ( t^{*-1/2} - (r) t^{*+1/2} + \dots \Big )$ where the rest of the terms are of higher degree
 
\end{itemize}

 Recall the skein relation: $t^{-1}V_+ - tV_- = (t^{1/2} - t^{-1/2})V_0$,

 And consider the following:

\begin{align}
    (t^{1/2} - t^{-1/2})V_0 & = (-1)(t^{-1/2} - t^{1/2})V_0 \notag \\
     & = (-1)^{n+1} \Big( t^{*-1}  - (r)t^{*} - t^{*} + (r)t^{*+1} + \dots \Big) \label{maybe same degree terms} \\
    & = \underbrace{(-1)^{n+1}  \Big ( t^{*-1} -(r+1)t^* + ( r)t^{*+1} + \dots \Big)}_{:=\tcc{**}} \notag
\end{align}

and then by the skein relation,
\begin{align*}
   V_- &= t^{-1}\Big( t^{-1} V_+ - (t^{1/2} - t^{-1/2})V_0 \Big) \\
    & =  t^{-1}\Bigg( (-1)^{n+1}\Big(t^{*-1} - (r)t^{*} + \dots \Big) - (\tcc{**}) \Bigg)\\
    & = t^{-1} (-1)^{n+1} \Big (t^{*} - (r)t^{*+1} + \dots\Big)
\end{align*}
And so $\min \deg V_- = * - 1 = (\min \deg V_+) - 1$.

Thus we have that $$\underbrace{\deg \nabla_- = 2 \min \deg V_-}_{\text{by Thm. \ref{Stoimenow almost pos Conway degree min genus}}} = 2\min\deg V_+ -2 < \underbrace{2\min \deg V_+ = \deg \nabla_+ }_{\text{by Thm. \ref{Cromwell pos Conway degree}}}.$$

\end{proof}

\begin{remark}
    The remaining terms not explictly written out in line \ref{maybe same degree terms} are of degree $*+1$ or higher, and this trickles down to the final line. So we do not claim that the second coefficient of $V_-$ must be equal to $\pm r.$
\end{remark}

The preceeding argument is mentioned in Stoimenow's work in the proofs for Proposition \ref{Stoimenow almost pos Conway degree min genus} and Lemma \ref{stoimenow's lemma} \cite{Stoimenow}.

    As we will see, what this means for us is that if two crossings connect the same pair of $A$-circles, we can smooth one of them away and not change the degree or leading coefficient of the Conway polynomial. This also appears in the work of Stoimenow and Ito \cite{StI}.

    \begin{corollary}\label{another crossing doesnt affect leading term}
    Let $D_+$ be an almost-positive diagram with one distinguished crossing $q$, let $D_-$ be the result of making $q$ negative, and let $D_0$ be the result of smoothing it. If there is another crossing in $D_+$ connecting the same two $A$-circles as $q$, then $$\lead \term \nabla_+ = z \lead \term \nabla_0.$$
    
\end{corollary}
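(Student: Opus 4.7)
The plan is to derive this directly from the Conway skein relation together with the degree bound already established in Proposition~\ref{another crossing doesnt affect conway degree}. Specifically, the skein relation gives
$$\nabla_+ - \nabla_- = z \nabla_0, \qquad \text{so} \qquad \nabla_+ = \nabla_- + z\nabla_0.$$
Under the hypothesis that another crossing connects the same pair of $A$-circles as $q$, Proposition~\ref{another crossing doesnt affect conway degree} yields the strict inequality $\deg \nabla_- < \deg \nabla_+$.

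Because $\nabla_+$ is a sum of $\nabla_-$ (which has strictly smaller degree) and $z\nabla_0$, the top-degree part of $\nabla_+$ cannot be contributed by $\nabla_-$. It must therefore come from $z\nabla_0$, and there is no possibility of cancellation in the leading term (otherwise $\nabla_+$ would have smaller degree than it does). In symbols, this forces $\deg(z\nabla_0) = \deg \nabla_+$ and
$$\lead\term \nabla_+ = \lead\term(z\nabla_0) = z \cdot \lead\term \nabla_0,$$
which is exactly the claim.

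The only step that needs even minor care is the observation that the degree comparison rules out any leading-coefficient cancellation, but this is forced by the very assertion that $\deg \nabla_- < \deg \nabla_+$: if the leading term of $z\nabla_0$ cancelled against anything, it would have to cancel against a term of $\nabla_-$ of the same degree, which is impossible. Thus the corollary is essentially immediate, and all the substantive work lives upstream in Proposition~\ref{another crossing doesnt affect conway degree} (and in the Stoimenow degree formulas invoked there). I do not anticipate any genuine obstacle beyond citing these earlier results.
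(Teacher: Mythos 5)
Your proof is correct and follows the same route as the paper: the Conway skein relation $\nabla_+ = \nabla_- + z\nabla_0$ combined with the strict degree drop $\deg\nabla_- < \deg\nabla_+$ from Proposition \ref{another crossing doesnt affect conway degree}. The paper additionally verifies $\deg(z\nabla_0)=\deg\nabla_+$ via Cromwell's genus formula and invokes positivity of the Conway coefficients to rule out cancellation, whereas your purely formal degree argument makes both of those auxiliary facts unnecessary; this is a harmless (indeed slightly cleaner) streamlining of the same idea.
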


\begin{proof}
 
The Conway skein relation tells us that $\nabla_+ = \nabla_- + z\nabla_0$.  

$D_+$ and $D_0$ are both positive diagrams, have the same number of $A$-circles and their crossing number differs by 1, so  by Proposition \ref{Cromwell pos Conway degree}, 
$$\deg \nabla_+ = c(D_+) - A_{D_+} + 1 = c(D_0) - A_{D_0} + 1 + 1 = \deg \nabla_0 + 1.$$

That is, $\nabla_0$ definitely contributes to the leading term of $\nabla_+$. What about $\nabla_-$?

Since $D_+, D_-,$ and $D_0$ are all positive or almost-positive diagrams, they all represent positive or almost-positive links, and therefore by Proposition \ref{pos and almost pos links have pos conway}, all of $\nabla_+, \nabla_-,$ and $\nabla_0$ are positive. 

It follows that if $\deg \nabla_- < \deg \nabla_+$, then $\lead \term \nabla_+ = z \lead \term \nabla_0.$

Similarly, if $\deg \nabla_- = \deg \nabla_+,$ then $\lead \term \nabla + = \lead \term \nabla_- + z \lead \term \nabla_0.$ 

By Proposition \ref{another crossing doesnt affect conway degree}, we are done.

\end{proof}

    \begin{lemma}\label{smooth away, keep lead coeff}
        Let $D$ be a positive diagram. Let $D_m$ be the result of adding $m$ crossings $D$ such that at every intermediate diagram $D_i$ (for $0\leq i\leq m$), the reduced $A$-state graph of $D_i$ is exactly the reduced $A$-state graph of $D$. Then $$\lead \term \nabla_{D_m} = z^m \lead \term \nabla_{D}.$$
    \end{lemma}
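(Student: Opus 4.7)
The plan is to proceed by induction on $m$. The base case $m=0$ is immediate: $D_0=D$, and $z^0 \lead\term \nabla_D = \lead\term \nabla_D$.

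For the inductive step, suppose the result holds for $m-1$, and write $D_m$ as the result of adding a single positive crossing $q$ to $D_{m-1}$. By hypothesis, the reduced $A$-state graphs of $D_{m-1}$ and $D_m$ agree. The key observation is that $A$-smoothing the crossing $q$ in $D_m$ reconnects its two incident arcs exactly as they were in $D_{m-1}$, so the $A$-circles of $D_m$ coincide with those of $D_{m-1}$, and $q$ contributes a single edge $e$ to the $A$-state graph of $D_m$ between the two $A$-circles containing those arcs. If $e$ were not already present in the $A$-state graph of $D_{m-1}$, then the reduced $A$-state graph would acquire a new edge in passing from $D_{m-1}$ to $D_m$, contradicting the hypothesis. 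Hence $e$ is already present in $D_{m-1}$, i.e.\ there is another crossing in $D_m$ joining the same pair of $A$-circles as $q$.

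Now take $D_+ = D_m$ with distinguished crossing $q$, and let $D_0$ be the result of $A$-smoothing $q$, which is $D_{m-1}$ (up to planar isotopy). By the observation above, the hypothesis of Corollary~\ref{another crossing doesnt affect leading term} is satisfied, so we obtain
$$\lead\term \nabla_{D_m} \;=\; z \cdot \lead\term \nabla_{D_{m-1}}.$$
Combining with the inductive hypothesis $\lead\term \nabla_{D_{m-1}} = z^{m-1}\lead\term \nabla_D$ yields $\lead\term \nabla_{D_m} = z^m \lead\term \nabla_D$, completing the induction.

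\textbf{Main obstacle.} The only subtlety is justifying that adding a crossing which preserves the reduced $A$-state graph really forces a duplicate crossing between the same pair of $A$-circles, since without this we cannot invoke Corollary~\ref{another crossing doesnt affect leading term}. A minor edge case is when both strands of $q$ lie on the same $A$-circle, so that $e$ is a loop; in the reduced-positive setting this is excluded because such a $q$ would be nugatory, and so this case does not arise in our applications. Once the hypothesis of the corollary is secured, the induction is routine.
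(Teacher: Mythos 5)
Your proposal is correct and follows essentially the same route as the paper: induction on $m$, reducing each step to Corollary~\ref{another crossing doesnt affect leading term} by observing that preservation of the reduced $A$-state graph forces the new crossing to duplicate an existing edge. Your added justification of that duplication (and the remark about the loop/nugatory edge case) is slightly more careful than the paper's, which simply asserts the existence of the parallel crossing.
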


    \begin{proof}

    We proceed by induction on $m$. If $m=0$, then $D_0=D$ and we are done. 

    If $m = 1$, then we have added a single crossing $q$ to $D$ to create $D_1$, and we have not changed the underlying reduced $A$-state graph structure. Therefore, there exists another crossing $p$ in $D_1$ that connects the same pair of $A$-circles as $q$. By Corollary \ref{another crossing doesnt affect leading term}, and letting $D_+ = D_1$ and $D_0 = D$, we have that 
$$\lead \term \nabla_{D_1} = z \lead \term \nabla_{D} $$. 

    Now assume there is some value of $m$ for which the statement holds, and consider a diagram $D_{m+1}$. This is the result of adding $m+1$ crossings to $D$, so is also the result of adding $1$ crossing to $D_m$ while preserving the underlying graph structure. Therefore, by the same argument as in the base case, 
    $$\lead \term \nabla_{D_{m+1}} = z \lead \term \nabla_{D_m}.$$ By inductive hypothesis, $\lead \term \nabla_{D_m} = z^m \lead \term \nabla_{D},$ so in total 

    $$\lead \term \nabla_{D_{m+1}} = z \Big( z^m \lead \term \nabla_{D} \Big) = z^{m+1} \lead \term \nabla_{D}.$$
    
    \end{proof}

    \begin{lemma}\label{smooth away burdens, keep lead coeff}
       Let $D_m$ be:
       \begin{itemize}
      \item  a Burdened diagram of type $1$ in which $m$ crossings can be smoothed to obtain Balanced type $1$ diagram $D$, 
         \item a Burdened diagram of type $2$ in which $m$ crossings can be smoothed to obtain a Balanced type $2$ diagram $D$, or
         
         \item an Oddly Burdened diagram of type $2$ in which $m$ crossings can be smoothed to obtain an Oddly Balanced type $2$ diagram $D$.
         
         Then $$\lead \term \nabla_{D_m} = z^m \lead \term \nabla_D. $$
        \end{itemize}
    \end{lemma}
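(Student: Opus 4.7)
The plan is to reduce the claim directly to Lemma \ref{smooth away, keep lead coeff} by viewing $D_m$ as the result of adding the $m$ burdening crossings one at a time to the Balanced (or Oddly Balanced) diagram $D$, and checking that each added crossing is parallel (in the $A$-state graph sense) to a crossing already present, so that the reduced $A$-state graph is preserved at every intermediate step. First I would establish that every burdening crossing in $D_m$ joins a pair of $A$-circles that is already connected by another crossing. This is a direct unraveling of definitions: comparing Definition \ref{Balanced type 1 def} with Definition \ref{Burdened type 1 def}, a pair of $A$-circles in a Balanced type-$1$ diagram shares $0$, exactly $1$ cycle-edge, or exactly $2$ cut-edge crossings, whereas the same pair in a Burdened type-$1$ diagram shares $0$, at least $1$, or at least $2$ respectively. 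Thus the $m$ excess crossings in $D_m$ each sit in a pair that already contains another crossing. The identical comparison between Definitions \ref{Balanced type 2 def} and \ref{Burdened type 2 def}, and between \ref{Oddly Balanced type 2 def} and \ref{Oddly Burdened type 2 def}, handles the type-$2$ and oddly-burdened cases.

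Next, I would enumerate the burdening crossings as $q_1, \dots, q_m$ and build a chain $D = D^{(0)}, D^{(1)}, \dots, D^{(m)} = D_m$ in which $D^{(i)}$ is obtained from $D^{(i-1)}$ by reinstating $q_i$. At every step, $q_i$ is still parallel to a crossing present in $D^{(i-1)}$ (no crossings have been removed along the way), so inserting $q_i$ creates a local bigon between two $A$-circles; its effect on the $A$-state graph is to add a duplicate edge, which is erased under edge reduction, while the set of $A$-circles is unaffected. Hence the reduced $A$-state graph of $D^{(i)}$ equals that of $D^{(i-1)}$ for every $i$, and so all intermediates share a reduced $A$-state graph with $D$. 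Because every intermediate diagram is positive (only positive crossings are added), Lemma \ref{smooth away, keep lead coeff} applies and gives $\lead \term \nabla_{D_m} = z^m \lead \term \nabla_D$.

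The main obstacle is really just the bookkeeping required to equate \emph{smoothing a burdening crossing in the diagram} with \emph{removing a duplicate edge in the $A$-state graph}, and in particular to verify that a crossing parallel to an existing one produces a local bigon rather than some more exotic rearrangement that could split or merge $A$-circles. Since a positive crossing's $A$-smoothing is the Seifert smoothing, and parallel edges in the $A$-state graph correspond exactly to bigons bordered by two distinct $A$-circles, this local picture is straightforward, and the inductive step above then goes through routinely.
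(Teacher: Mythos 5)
Your proposal is correct and follows essentially the same route as the paper, which simply cites Lemma \ref{smooth away, keep lead coeff}; you additionally spell out the verification (comparing the Balanced and Burdened definitions to see that each burdening crossing is parallel to a surviving one, so the reduced $A$-state graph is preserved at every intermediate step) that the paper leaves implicit. No gaps.
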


    \begin{proof}
        This follows immediately from Lemma \ref{smooth away, keep lead coeff}.  
    \end{proof}

    So, to find the leading coefficient of the Conway polynomial of Burdened diagram, it suffices to find the leading coefficient of a Balanced one.

    \begin{lemma}\label{contract cut edges, same result}
    Let $D$ be a positive link diagram with reduced $A$-state graph $G$. Let $G'$ be the result of contracting all cut edges in $G$. Then for any positive link diagram $D'$ whose reduced $A$-state graph is $G'$, 
    $$\lead \coeff \nabla_{D'} = \lead \coeff \nabla_{D}.$$
    \end{lemma}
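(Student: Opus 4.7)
The plan is to prove the lemma by induction on the number of cut edges in $G$, contracting one cut edge at a time while tracking the leading Conway coefficient.

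For the base case, when $G$ has no cut edges we have $G' = G$, and I must show that $\lead \coeff \nabla_{D'} = \lead \coeff \nabla_D$ for any two positive diagrams $D, D'$ sharing the same reduced $A$-state graph. This follows from Lemma \ref{smooth away, keep lead coeff}: any such $D'$ can be obtained from $D$ (or from a common minimal representative) by adding or removing crossings that preserve the reduced $A$-state graph, and each such move multiplies $\lead \term \nabla$ by a factor of $z^{\pm 1}$, leaving the leading coefficient unchanged.

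For the inductive step, I fix a cut edge $e = (v_1, v_2)$ of $G$. First I apply Lemma \ref{smooth away, keep lead coeff} to reduce to a diagram in which $e$ corresponds to exactly two crossings $p, q$, since smoothing away any extra parallel crossings along $e$ preserves $\lead \coeff \nabla$. Next I apply the Conway skein relation at $q$; because the other crossing $p$ also connects the Seifert circles $A_1, A_2$ corresponding to $v_1, v_2$, Corollary \ref{another crossing doesnt affect leading term} yields $\lead \term \nabla_D = z \cdot \lead \term \nabla_{D_0}$, where $D_0$ is $D$ with $q$ smoothed, so in particular $\lead \coeff \nabla_D = \lead \coeff \nabla_{D_0}$.

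In $D_0$ the Seifert circles $A_1$ and $A_2$ have merged into a single circle $A_{12}$, and the remaining crossing $p$ is now a self-crossing of $A_{12}$. The key geometric claim is that $p$ is nugatory in $D_0$: because $e$ is a cut edge of $G$, there is a simple closed curve $\gamma$ in the plane meeting $D$ transversely exactly at $p$ and $q$ and separating the diagram into the two pieces corresponding to the components of $G \setminus e$; after the oriented smoothing at $q$, one can push $\gamma$ across the smoothed arcs so that it meets $D_0$ only at $p$, exhibiting $p$ as nugatory. A Reidemeister I move removes $p$ to produce a positive diagram $\wt D$ with $\nabla_{\wt D} = \nabla_{D_0}$ whose reduced $A$-state graph is $G$ with $e$ contracted; since contracting a cut edge does not create new cut edges among the remaining edges, $\wt D$ has one fewer cut edge than $D$. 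The inductive hypothesis applied to $\wt D$ and $D'$ then gives $\lead \coeff \nabla_{\wt D} = \lead \coeff \nabla_{D'}$, completing the induction.

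The main obstacle is the geometric verification that $p$ is nugatory in $D_0$: this requires translating the combinatorial cut-edge property of the reduced $A$-state graph into a precise planar separating curve for the diagram, and then tracking the behavior of that curve under the oriented smoothing at $q$. A secondary subtlety is the base case, where one must argue that the leading Conway coefficient depends only on the reduced $A$-state graph (not on the multiplicities of crossings at each edge), which requires verifying that any two positive diagrams sharing the same reduced graph are connected by a chain of parallel-crossing additions and deletions — an assertion comfortable within the Balanced/Burdened framework in which the lemma will be applied.
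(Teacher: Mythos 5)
Your proposal is correct and follows essentially the same route as the paper: use Lemma \ref{smooth away, keep lead coeff} to normalize the number of crossings carried by each cut edge, observe that the cut-edge property makes the relevant crossing nugatory, and remove it to contract the edge. The only cosmetic differences are that the paper reduces each cut edge to a \emph{single} crossing (already nugatory in $D$) and removes all of them at once by an isotopy with no skein computation, whereas you keep two crossings, eliminate one via Corollary \ref{another crossing doesnt affect leading term}, and induct on the number of cut edges; both arguments lean equally on the final (and in both write-ups only sketched) appeal to Lemma \ref{smooth away, keep lead coeff} to pass from the contracted diagram to an arbitrary positive $D'$ realizing $G'$.
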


    \begin{proof}

    Let $D$ be a positive link diagram whose reduced $A$-state graph $G$ has $t$ cut edges (edges that are not part of cycles).
    By Lemma \ref{smooth away, keep lead coeff}, it suffices to consider the case where every edge in $G$ corresponds to exactly one crossing in $D$. 

    Then there are $t$ nugatory crossings in $D$. For each such crossing, we can lift it up and over half of the diagram
    without changing any other part of the diagram, and the only effect this has is to contract the corresponding edge in $G$. This new diagram is actually equivalent the old one and thus has the same Conway polynomial, not just the same leading Conway coefficient. As we said before, the result then follows by Lemma \ref{smooth away, keep lead coeff}. 
    \end{proof}

    \begin{remark}
        This means that to find the leading Conway coefficient of a positive link $D$, we can instead:
        \begin{itemize}
            \item Draw its reduced $A$-state graph $G$, 
            \item Contract all cut edges to obtain a new graph $G'$,
            \item Draw a positive link diagram $D'$ whose reduced $A$-state graph is $G'$ and whose crossings are in one-to-one correspondence with the edges of $G'$, and then 
            \item Find the leading Conway coefficient of $D'$. 
        \end{itemize}
    \end{remark}

    \subsection{Leading Conway Coefficient for Balanced diagrams of type $1$} \label{lead conway coeff coeff 1 section}

    We begin with the easiest case. The standard alternating diagram of the $T(2,2p)$ torus link (given positive orientation) is a Balanced diagram of type $1$. It is known that 
    \begin{proposition}\label{torus links conway polynomial}
        The Conway polynomial of the positive torus link $T(2,2p)$ is $\nabla = pz$. 
    \end{proposition}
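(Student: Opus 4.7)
The plan is induction on $p$ via the Conway skein relation $\nabla_+ - \nabla_- = z\nabla_0$, applied at any single crossing of the standard positive braid presentation $T(2,2p) = \widehat{\sigma_1^{2p}}$. A crossing switch cancels $\sigma_1 \sigma_1^{-1}$ and gives $T(2, 2p-2)$, while oriented smoothing at the chosen crossing gives the closure $\widehat{\sigma_1^{2p-1}}$, producing the recurrence
$$\nabla(T(2, 2p)) \;=\; \nabla(T(2, 2p-2)) + z\,\nabla\bigl(\widehat{\sigma_1^{2p-1}}\bigr).$$
The base case $p = 1$ is the positive Hopf link $T(2,2)$, whose Conway polynomial is well known to equal $z$, matching $pz$ at $p=1$.

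To close the induction and obtain the exact identity $\nabla(T(2,2p)) = pz$, I would simultaneously establish the companion identity $\nabla(\widehat{\sigma_1^{2p-1}}) = 1$ by a parallel induction on odd exponents, with base case $\nabla(\widehat{\sigma_1}) = \nabla(\mathrm{unknot}) = 1$ and skein step
$$\nabla\bigl(\widehat{\sigma_1^{2p-1}}\bigr) \;=\; \nabla\bigl(\widehat{\sigma_1^{2p-3}}\bigr) + z\,\nabla(T(2, 2p-2)).$$
Assuming both inductive hypotheses $\nabla(T(2, 2p-2)) = (p-1)z$ and $\nabla(\widehat{\sigma_1^{2p-1}}) = 1$ hold in the coupled system, one reads off $\nabla(T(2, 2p)) = (p-1)z + z\cdot 1 = pz$ directly from the main recurrence.

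The principal obstacle is making the two inductions mesh exactly. The companion recurrence naively feeds a term $z \cdot (p-1)z = (p-1)z^2$ into $\nabla(\widehat{\sigma_1^{2p-1}})$, so ensuring it collapses to $1$ on the nose (rather than to $1$ plus higher-order $z$-terms) is the technical heart of the argument. This is the step that distinguishes a proof of the exact identity $\nabla = pz$ from a merely ``leading-term'' identification, and I would handle it either by direct computation with a Seifert matrix for the standard genus-$(p-1)$ Seifert surface of the positive braid closure, or by invoking a known explicit closed form for $\nabla_{T(2,n)}$ under the normalization used in this paper and specializing to $n = 2p$.
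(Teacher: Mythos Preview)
Your coupled induction cannot close, and the obstacle you flag is not a technicality but a genuine error: the companion identity $\nabla\bigl(\widehat{\sigma_1^{2p-1}}\bigr)=1$ is false. Already for $p=2$ this is the trefoil, with $\nabla(3_1)=1+z^2$. Running your two recurrences from $\nabla(T(2,2))=z$ and $\nabla(\text{unknot})=1$ gives $\nabla(T(2,4))=2z+z^3$, $\nabla(T(2,6))=3z+4z^3+z^5$, and so on---polynomials of degree $2p-1$, not $pz$.

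The root cause is orientation. The Conway polynomial of a multi-component link depends on the orientation of each component, and your braid-closure presentation $\widehat{\sigma_1^{2p}}$ carries the \emph{parallel} orientation, for which the Seifert surface has genus $p-1$ and $\max\deg\nabla=2p-1$. The proposition, in the context set up just before it, concerns the \emph{anti-parallel} orientation: this is the orientation for which the standard alternating diagram is a $2p$-Balanced diagram of type~$1$ (its $A$-state graph is a $2p$-cycle, the Seifert surface is an annulus, and $\max\deg\nabla=1$). With that orientation, the oriented smoothing at any crossing joins the two components into a single unknotted circle---it gives the \emph{unknot}, not $T(2,2p-1)$. The skein relation then reads
\[
\nabla(T(2,2p)) \;=\; \nabla(T(2,2p-2)) + z\cdot\nabla(\text{unknot}) \;=\; \nabla(T(2,2p-2)) + z,
\]
and a single clean induction from $\nabla(T(2,2))=z$ finishes it. No companion identity is needed, and none of your proposed workarounds (Seifert matrix for the genus-$(p-1)$ surface, closed form for the braid-orientation polynomial) would salvage the argument, since with that orientation the statement you are trying to prove is simply not true.
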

    \begin{proof}
        This can be seen using the Conway skein relation, and the observation that smoothing one of the crossings gives the unknot, while changing it gives an almost-positive diagram of the link $T(2,2p-2)$. So 
        $$\nabla_{T(2,2p)} = \nabla_{T(2,2p-2)} + z(1) = \dots = \nabla_{T(2,2)} + (p-1) z = pz.$$
    \end{proof}

    \begin{lemma}\label{lead coeff k-Bur}
        Let $D$ be a $k$-Burdened diagram. Then the leading coefficient of its Conway polynomial is $\lead \coeff \nabla_D = \frac{k}{2}$
    \end{lemma}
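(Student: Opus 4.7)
The plan is to reduce the general $k$-Burdened case to the standard $T(2,k)$ torus link via the two ``pruning'' lemmas already established, and then invoke the known Conway polynomial of $T(2,k)$.

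First I would apply Lemma \ref{smooth away burdens, keep lead coeff} to reduce from a $k$-Burdened diagram $D_m$ to a $k$-Balanced diagram $D$, since $\lead \term \nabla_{D_m} = z^m \lead \term \nabla_D$ and in particular $\lead \coeff \nabla_{D_m} = \lead \coeff \nabla_D$. So it suffices to prove the statement for $k$-Balanced diagrams. Next, I would invoke Lemma \ref{contract cut edges, same result}: contracting every cut edge in the reduced $A$-state graph $G$ of $D$ preserves the leading Conway coefficient. After contracting the tree part, we are left with a graph $G'$ that is a single cycle of length $k$ (which, by Proposition \ref{even cycles}, forces $k = 2p$ for some positive integer $p$). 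So it suffices to compute the leading Conway coefficient of some positive link diagram $D'$ whose reduced $A$-state graph is a $k$-cycle with each edge corresponding to exactly one crossing.

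The next step is to identify $D'$. A $k$-Balanced diagram whose reduced $A$-state graph is a $k$-cycle has $k$ crossings, each joining consecutive $A$-circles around a cycle, and these $A$-circles are precisely the Seifert circles of $D'$. Arranging the $k$ circles around the cycle in the plane and connecting consecutive ones by a single positive crossing, one sees that $D'$ is (isotopic to) the standard positive diagram of the torus link $T(2,k) = T(2,2p)$. Applying Proposition \ref{torus links conway polynomial}, we conclude
\[
\lead \coeff \nabla_{D'} = p = \frac{k}{2},
\]
and chaining back through the two reduction lemmas yields $\lead \coeff \nabla_{D_m} = k/2$.

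The main obstacle is the identification step: verifying that \emph{every} positive diagram whose reduced $A$-state graph is a single $k$-cycle (with each edge corresponding to one crossing) actually represents $T(2,k)$, rather than some other link with different Conway polynomial. The key point to check is orientation compatibility — around the cycle, consecutive $A$-circles must alternate clockwise/counterclockwise as in the proof of Proposition \ref{even cycles}, which forces the crossings to stack into the standard $(2,k)$-torus braid. Once that rigidity is pinned down, the rest is just assembling the three ingredients (Lemmas \ref{smooth away burdens, keep lead coeff} and \ref{contract cut edges, same result}, and Proposition \ref{torus links conway polynomial}).
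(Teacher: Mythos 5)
Your proposal is correct and follows essentially the same route as the paper, whose entire proof is the one-line citation of Lemma \ref{contract cut edges, same result} and Proposition \ref{torus links conway polynomial}; your preliminary appeal to Lemma \ref{smooth away burdens, keep lead coeff} is harmless but redundant, since Lemma \ref{contract cut edges, same result} already applies to the Burdened diagram directly. The ``identification obstacle'' you flag at the end is absorbed by the statement of Lemma \ref{contract cut edges, same result}, which equates the leading coefficient of $D$ with that of \emph{any} positive diagram $D'$ realizing the contracted graph, so it suffices to exhibit a single such $D'$ with known Conway polynomial (the standard positive $T(2,k)$ diagram) rather than to show that every positive diagram whose reduced $A$-state graph is a $k$-cycle represents $T(2,k)$.
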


    \begin{proof}
        This follows directly from Lemma \ref{contract cut edges, same result} and Proposition \ref{torus links conway polynomial}. 
    \end{proof}

    \subsection{Leading Conway Coefficient for Balanced diagrams of type $2$}\label{lead conway coeff coeff 2 section}

    As in the last section, it suffices to consider link diagrams whose reduced $A$-state graphs contain no cut edges. 
For a graph to be the reduced $A$-state graph of a Balanced type $2$ link diagram, $x$ (the number of edges that bound both holes of the graph) must be even.

    We will consider the case when $x=0$ and the case when $x \geq 2$. \\

    \underline{Case: $x = 0$}

    For $k_1$ and $k_2$ even and $\geq 4$, the connect sum of the standard alternating $T(2,k_1)$ and $T(2,k_2)$ torus link diagrams form a positive link diagram whose reduced $A$-state graph is such that $x=0$. The leading Conway coefficient of this composite link is $\frac{k_1}{2}\frac{k_2}{2}=\frac{k_1k_2}{4}.$

    So by Lemma \ref{contract cut edges, same result}, we just proved: 

    \begin{proposition}\label{lead coeff Bur type 2 x=0}
        Let $D$ be a $(k_1, k_2)$-Burdened diagram with reduced $A$-state graph $G$. If the two holes of $G$ are not bounded by any of the same edges, then the leading coefficient of its Conway polynomial is
        $$\lead \coeff \nabla_D = \frac{k_1k_2}{4}.$$
    \end{proposition}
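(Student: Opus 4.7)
The plan is to reduce the general claim about $(k_1, k_2)$-Burdened diagrams to a single computation on the standard model, namely the connect sum $T(2,k_1)\,\#\,T(2,k_2)$ of two torus links.

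First, I would apply Lemma \ref{smooth away burdens, keep lead coeff} to strip the burdening crossings: smoothing the $m$ crossings that distinguish the Burdened diagram $D$ from its underlying $(k_1,k_2)$-Balanced diagram $D'$ multiplies the leading Conway term by $z^m$ but preserves the leading coefficient, so $\lead\coeff\nabla_D=\lead\coeff\nabla_{D'}$. Then I would apply Lemma \ref{contract cut edges, same result} to contract all cut edges of the reduced $A$-state graph of $D'$, which preserves the leading Conway coefficient. Because $x=0$, no cycle edge bounds both holes, so after contracting the remaining cut edges (including any path that once connected the two cycles) the graph consists of exactly two cycles of lengths $k_1$ and $k_2$ meeting at a single shared vertex.

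Next, I would observe that the standard alternating diagram of $T(2,k_1)\,\#\,T(2,k_2)$ (given positive orientation) is a positive link diagram whose reduced $A$-state graph is exactly this bouquet of two cycles, and in which each cycle edge corresponds to a single crossing. Lemma \ref{contract cut edges, same result} therefore forces its leading Conway coefficient to agree with that of our reduced $D'$. Finally, using multiplicativity of the Conway polynomial under connect sum together with Proposition \ref{torus links conway polynomial},
\[
\nabla_{T(2,k_1)\,\#\,T(2,k_2)}=\nabla_{T(2,k_1)}\cdot\nabla_{T(2,k_2)}=\tfrac{k_1}{2}z\cdot\tfrac{k_2}{2}z=\tfrac{k_1k_2}{4}z^{2},
\]
so $\lead\coeff\nabla_D=\tfrac{k_1k_2}{4}$, as claimed.

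The main obstacle is the graph-to-diagram step: verifying that, once cut edges are contracted, the reduced $A$-state graph of a $(k_1,k_2)$-Balanced diagram with $x=0$ really is the wedge of two cycles at one vertex, and that such a graph arises from no positive diagram other than the torus-link connect sum (up to the equivalences that preserve the leading Conway coefficient). This amounts to ruling out, using Definition \ref{Balanced type 2 def} and the constraint $x=0$, any intermediate structure between the two holes — which is immediate once cut edges are removed — and then invoking Lemma \ref{contract cut edges, same result} in its full strength to handle the remaining planar ambiguity.
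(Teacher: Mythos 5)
Your proposal is correct and follows essentially the same route as the paper: reduce to the Balanced case, contract cut edges via Lemma \ref{contract cut edges, same result}, recognize the resulting graph as that of the standard diagram of $T(2,k_1)\,\#\,T(2,k_2)$, and compute the leading coefficient by multiplicativity and Proposition \ref{torus links conway polynomial}. The "planar ambiguity" you flag at the end is exactly what the full statement of Lemma \ref{contract cut edges, same result} (which applies to \emph{any} positive diagram realizing the contracted graph) is designed to absorb, so no further argument is needed.
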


    Now, we consider the case if there are edges in $G$ that bound both holes.\\

   \underline{Case: $x \geq 2$}

    We observe that the standard alternating diagram of certain pretzel links will have this kind of reduced $A$-state graph. 

 We observe that the standard alternating diagram of a positive pretzel link $P(-p,-q,-r)$ with even $p, q,r \geq 2$ is a Balanced diagram of type $2$. The pretzel with all counter-clockwise twists is the one which actually corresponds to the link in which all crossings are positive. By convention, a counterclockwise twist is indicated with a negative sign, and we choose to follow the standard convention of naming our pretzel links in the following computations. 

     \begin{lemma}\label{pretzel conway all evens}
        Let $p, q,r$ even integers $\geq 2$. Let $\nabla(-p,-q,-r)$ be the Conway polynomial for the $P(-p,-q,-r)$ positive pretzel link. Then 
        $$\nabla(-p,-q,-r) =  \Big( \frac{pq + pr + qr}{4}\Big)z^2.$$
    \end{lemma}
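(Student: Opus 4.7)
The plan is to compute $\nabla$ directly from an explicit Seifert matrix for the standard Seifert surface $F$ of the positive pretzel diagram $P(-p,-q,-r)$. This approach bypasses a skein-induction route, which would require auxiliary handling of pretzels with one odd argument, by exploiting the fact that when $p,q,r$ are all even the surface $F$ turns out to be \emph{planar}.

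First I would describe $F$. Applying Seifert's algorithm to the positive diagram and orienting the three components of $L=P(-p,-q,-r)$ so that every crossing is positive forces the two strands in each twist region to be anti-parallel, so each Seifert smoothing contributes a horizontal band. A direct count then gives $A_D = p+q+r-1$ Seifert circles: the pretzel-closure arcs together with the cups and caps at the tops and bottoms of the tangles assemble into two ``outer'' Seifert circles, and each tangle $T_i$ contributes $p_i-1$ closed loops in its interior (one between each pair of adjacent horizontal bars). Therefore $\chi(F) = A_D - c(D) = -1$, and since $L$ has three components $F$ must be a thrice-punctured sphere (Seifert genus zero), in particular planar, with $H_1(F)\cong\Z^2$. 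This also matches Proposition \ref{Cromwell pos Conway degree}, which forces $\max\deg\nabla = 1-\chi = 2$.

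Next I would pick a basis $\{\alpha_1,\alpha_2\}$ of $H_1(F)$, with $\alpha_1$ a loop on $F$ that threads once through the bands of $T_1$ and $T_2$ and $\alpha_2$ the analogous loop through $T_2$ and $T_3$. Computing linking numbers with the positive push-off --- using the standard rule that a twist region with $m$ positive crossings contributes $+m/2$ to the self-linking of any loop threading it once, and couples two loops sharing that region by the same $+m/2$ --- yields the symmetric matrix
\[
V \;=\; \tfrac{1}{2}\begin{pmatrix} p+q & q \\ q & q+r \end{pmatrix}.
\]
Symmetry is automatic: because $F$ is planar, the intersection form on $H_1(F)$ vanishes and $V-V^{\top}=0$. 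Consequently
\[
\nabla(z) \;=\; \det\!\bigl(t^{1/2}V - t^{-1/2}V^{\top}\bigr) \;=\; (t^{1/2}-t^{-1/2})^{2}\det V \;=\; z^{2}\det V,
\]
and $\det V = \tfrac{(p+q)(q+r)-q^{2}}{4} = \tfrac{pq+pr+qr}{4}$, giving the claimed formula.

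The main obstacle is the geometric setup: one must verify that the three components of $P(-p,-q,-r)$ admit an orientation making every crossing positive and rendering the tangle strands anti-parallel (a careful trace of the three components through the pretzel closure suffices), confirm the Seifert-circle count $A_D=p+q+r-1$, and exhibit loops $\alpha_1,\alpha_2$ realizing the stated Seifert pairing. Once these checks are in place, the algebra is immediate: symmetry of $V$ collapses $\det(t^{1/2}V - t^{-1/2}V^\top)$ into the single monomial $(\det V)z^{2}$, and the formula falls out of a two-line determinant calculation.
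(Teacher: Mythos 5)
Your proof is correct, but it takes a genuinely different route from the paper. The paper proves this lemma by skein induction: changing a crossing in the $p$-twist column reduces to $P(-(p-2),-q,-r)$, smoothing it yields the torus link $T(2,q+r)$ whose Conway polynomial is already known from Proposition \ref{torus links conway polynomial}, and the base case $P(0,-q,-r)$ is the connected sum $T(2,q)\,\#\,T(2,r)$; unwinding the recursion gives $\frac{qr}{4}z^2 + \frac{p}{2}\cdot\frac{q+r}{2}z^2$. Your argument instead identifies the spanning surface as a planar (genus-zero) Seifert surface, reads off the symmetric Seifert matrix $V=\tfrac12\begin{pmatrix} p+q & q\\ q & q+r\end{pmatrix}$, and applies the potential-function formula $\nabla = \det(t^{1/2}V - t^{-1/2}V^{\top}) = z^2\det V$. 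Both are sound. The paper's skein route has the advantage of slotting directly into the surrounding machinery --- it reuses the $T(2,2p)$ computation and adapts with almost no change to the all-odd case in the next subsection (Theorem \ref{lead coeff for type 2 Oddly Bur}), where your planarity trick breaks down: for odd $p,q,r$ the surface has genus one, $V$ is no longer symmetric, and the determinant no longer collapses to a single monomial. Your route, on the other hand, is a clean one-shot computation that produces the entire polynomial without induction and makes the symmetry of the answer in $p,q,r$ transparent; the geometric bookkeeping (orientability, the Seifert-circle count $A_D=p+q+r-1$, and the linking-number contributions $\pm m/2$ per twist region) is standard and checks out.
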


    \begin{proof}
        This follows directly from the Conway skein relation, and the following facts: 
        \begin{itemize}
            \item Changing one of the crossings in the $p$-twist column gives us an almost-positive diagram of the positive link $P(-(p-2), -q, -r)$.
            \item Smoothing one of the crossings in the $p$-twist wipes out that whole column, and gives us torus link $T(2,q+r)$, which by Lemma \ref{torus links conway polynomial} has Conway polynomial $\nabla = \frac{q+r}{2}z$. 
            \item The $P(0, -q, -r)$ pretzel link is actually the connect sum of torus links $T(2,q)$ and $T(2,r)$. By Lemma \ref{torus links conway polynomial} and multiplicativity of the Conway polynomial, this means $\nabla(0,-q, -r) = \nabla(T(2,q))\cdot \nabla(T(2,r)) = \frac{q}{2}z\cdot \frac{r}{2}z = \frac{qr}{4}z^2$.
        \end{itemize}

        Therefore, 
        \begin{align*}
            \nabla(-p,-q,-r) &= \nabla(0, -q, -r) + \Big(\frac{p}{2}\Big) z \nabla(T(2,q+r))\\
            & = \frac{qr}{4}z^2 + \Big(\frac{p}{2}\Big)\frac{q+r}{2}z^2\\
            & = \frac{pq+pr+qr}{4}z^2.
        \end{align*}

        \end{proof}

\begin{theorem} \label{lead coeff for bur type 2}
    Let $D$ be a $(k_1, k_2)$-Burdened diagram. Then the leading coefficient of its Conway polynomial is 
    $$\lead \coeff \nabla_D = \frac{k_1k_2 - x^2}{4}.$$
\end{theorem}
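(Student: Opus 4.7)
The plan is to reduce the general Burdened type $2$ diagram to a model pretzel link and then apply the Conway polynomial computation of Lemma \ref{pretzel conway all evens}. First, by Lemma \ref{smooth away burdens, keep lead coeff}, smoothing the $m$ burdening crossings of $D$ produces a $(k_1, k_2)$-Balanced type $2$ diagram $D_0$ whose Conway polynomial has leading term equal to $z^{-m}$ times that of $\nabla_D$; in particular, the leading coefficient is preserved, and the parameters $k_1, k_2, x$ are unchanged since smoothing does not affect the reduced $A$-state graph. Second, by Lemma \ref{contract cut edges, same result}, after contracting all cut edges of the reduced $A$-state graph $G$ of $D_0$, the leading Conway coefficient of $D_0$ equals that of any positive link diagram whose reduced $A$-state graph is the contracted graph $G'$.

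The key structural observation is that $G'$ is a bridgeless planar graph with exactly two internal faces bounded by $k_1$ and $k_2$ edges sharing exactly $x$ common boundary edges. When $x \geq 2$, a face-counting argument (using Euler's formula together with the fact that each edge borders two of the three faces) forces $G'$ to be the ``theta-like'' graph consisting of three internally disjoint paths of lengths $k_1 - x$, $x$, and $k_2 - x$ joining two pole vertices. By Proposition \ref{even cycles}, each of these path lengths is even and at least $2$. The positive pretzel $D_{\mathrm{pr}} = P(-(k_1-x),\,-x,\,-(k_2-x))$ has this reduced $A$-state graph: with anti-parallel orientation in each column, Seifert smoothing turns a column of $p$ positive crossings into $p-1$ small ``eye'' Seifert circles arranged in a vertical chain between the two big top and bottom ``rail'' Seifert circles, contributing a path of $p$ edges to $G'$. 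Applying Lemma \ref{contract cut edges, same result} yields $\lead \coeff \nabla_D = \lead \coeff \nabla_{D_{\mathrm{pr}}}$, and Lemma \ref{pretzel conway all evens} then gives
\begin{equation*}
\lead \coeff \nabla_{D_{\mathrm{pr}}} = \frac{(k_1-x)\cdot x + (k_1-x)(k_2-x) + x\cdot(k_2-x)}{4} = \frac{k_1 k_2 - x^2}{4},
\end{equation*}
after routine algebraic simplification (the three cross terms involving $x$ telescope, leaving only $k_1 k_2 - x^2$ in the numerator). The remaining case $x = 0$ falls outside the theta-like framework but is handled directly by Proposition \ref{lead coeff Bur type 2 x=0}, which yields $\frac{k_1 k_2}{4}$, consistent with the formula.

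The main obstacle is the structural claim that $G'$ must be theta-like when $x \geq 2$. The planar-graph argument I have in mind proceeds as follows: since $G'$ is bridgeless and (in the non-degenerate $x \geq 1$ case) has no cut vertices either, each internal face boundary is a simple cycle, so the $x$ shared edges cannot themselves form a cycle; otherwise one face boundary would properly contain the other, which a simple cycle cannot do. The shared edges therefore form a single path between two vertices, and the non-shared portions of each hole boundary must each form a path between those same two vertices. I would formalize this as a short lemma before invoking the pretzel comparison.
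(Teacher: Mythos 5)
Your proposal is correct and follows essentially the same route as the paper: reduce via Lemma \ref{smooth away burdens, keep lead coeff} and Lemma \ref{contract cut edges, same result} to the pretzel link $P(-(k_1-x),-x,-(k_2-x))$, apply Lemma \ref{pretzel conway all evens}, and handle $x=0$ separately through Proposition \ref{lead coeff Bur type 2 x=0}. The only difference is that you explicitly isolate and sketch the structural claim that the contracted reduced $A$-state graph must be a theta graph when $x\geq 2$ -- a step the paper's proof leaves implicit when it asserts that it ``suffices to find the leading Conway coefficient for any $(k_1,k_2)$-Burdened diagram with that value of $x$'' -- and your bridgeless-plus-no-cut-vertex argument for that claim is sound.
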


\begin{proof}
    We have already proven the case if $x=0$ in our Proposition \ref{lead coeff Bur type 2 x=0}. 

    If $x \geq 2$, then by Lemma \ref{contract cut edges, same result} it suffices to find the leading Conway coefficient for any $(k_1, k_2)$-Burdened diagram with that value of $x$. With our notation for Balanced diagrams of type $2$, the pretzel link $P(-p,-q,-r)$ for $p,q,r$ even is a $\Big((p+q), (r+q)\Big)$-Balanced link, where  $k_1 = p+q, k_2 = q+r,$ and $x=q$. 

    So, let us put that information into the expression above: 
    \begin{align*}
        \frac{k_1k_2 - x^2}{4} &= \frac{(p+q)(r+q) - (q)^2}{4} \\
        & = \frac{pq + pr + qr}{4} \\
        & = \lead \coeff \nabla(-p, - q, -r)\\
        & = \lead \coeff \nabla_D,
    \end{align*}

     as desired. 
\end{proof}

\subsection{Oddly Burdened type 2}

As in the previous case, we can look to pretzel links for a place to start.

\begin{theorem}
    Let $D$ be a $(k_1, k_2)$-Oddly Burdened diagram. Then the leading coefficient of its Conway polynomial is 
    $$\lead \coeff \nabla_D = \frac{k_1k_1 - x^2 + 1}{4}.$$
\end{theorem}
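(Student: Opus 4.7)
My plan follows the template of the proof of Theorem \ref{lead coeff for bur type 2}, adjusted for the odd-parity constraints. First, by Lemma \ref{smooth away burdens, keep lead coeff} and Lemma \ref{contract cut edges, same result}, the leading coefficient $\lead \coeff \nabla_D$ of a $(k_1,k_2)$-Oddly Burdened diagram $D$ is unchanged if we smooth away its burden crossings (passing to an Oddly Balanced diagram) and then contract all cut edges of the reduced $A$-state graph. So the computation reduces to evaluating $\lead \coeff \nabla$ on a single Oddly Balanced representative whose reduced $A$-state graph has only cycle edges.

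Next I would analyze the structure of this reduced graph. It is connected, planar, has exactly two interior faces with boundaries of lengths $k_1$ and $k_2$ sharing $x$ edges, and every edge lies on a cycle. Proposition \ref{even cycles} forces every cycle, and hence every face boundary, to have even length. The Oddly Balanced hypothesis requires the total number of cycle edges $k_1 + k_2 - x$ to be odd. The figure-eight configuration corresponding to $x=0$ is therefore impossible, since both $k_1$ and $k_2$ would be even. Thus $x \geq 1$ and the graph is a theta graph with three internally-disjoint paths of lengths $p = k_1-x$, $q = x$, $r = k_2-x$; the even-cycle condition on $p+q$, $q+r$, $p+r$ combined with the odd sum $p+q+r$ forces $p, q, r$ all odd. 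This graph is realized as the reduced $A$-state graph of the standard positive pretzel diagram $P(-p,-q,-r)$.

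I would then establish the odd analogue of Lemma \ref{pretzel conway all evens}:
\[
\lead \coeff \nabla\bigl(P(-p,-q,-r)\bigr) = \frac{pq + pr + qr + 1}{4}\quad\text{for odd } p, q, r \geq 1.
\]
The base case $P(-1,-1,-1) = 3_1$ gives $\nabla = 1 + z^2$, matching $\frac{1+1+1+1}{4} = 1$. For the inductive step, I apply the Conway skein relation at a crossing in the $p$-column with $p \geq 3$: the crossing-change term is the all-odd pretzel $P(-(p-2),-q,-r)$, whose leading contribution sits two degrees lower by the inductive hypothesis, so the leading term of $\nabla(P(-p,-q,-r))$ is read off from $z\nabla_0$. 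Once the inductive formula is in hand, substituting $p = k_1-x$, $q = x$, $r = k_2 - x$ and simplifying yields $\lead \coeff \nabla_D = \frac{k_1 k_2 - x^2 + 1}{4}$, as desired.

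The hard part will be evaluating the smoothing $\nabla_0$. In the even-pretzel argument of Lemma \ref{pretzel conway all evens}, the two strands inside an even column are anti-parallel, so Conway-smoothing a crossing ``wipes out'' the column and returns the torus link $T(2, q+r)$. With an odd column, the strands through the column are parallel and the smoothing instead leaves behind a mixed-parity pretzel $P(-(p-1),-q,-r)$ with one even column and two odd ones, which is not covered by Lemma \ref{pretzel conway all evens}. My plan to get around this is to run a short secondary induction computing $\nabla$ for these mixed-parity pretzels, either by smoothing in one of the remaining odd columns (reducing to a connect sum of torus links of the form $P(-(p-1),0,-r) = T(2,p-1)\# T(2,r)$ via the ``wipes out'' principle) or by repeatedly applying the skein relation until we reach the all-even base handled by Lemma \ref{pretzel conway all evens}. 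The extra $+1$ in the numerator of the new formula, absent from the all-even case, should ultimately trace back to the constant term of the trefoil's Conway polynomial $1 + z^2$, which is the minimal odd base case that feeds into every induction here.
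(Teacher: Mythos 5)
Your overall route is the paper's route: reduce to an Oddly Balanced representative, contract cut edges via Lemma \ref{contract cut edges, same result} to land on the all-odd pretzel $P(-p,-q,-r)$ with $p=k_1-x$, $q=x$, $r=k_2-x$, and then run a skein induction down to the trefoil. Your structural analysis of the theta graph and the parity forcing is fine. The gap is in the step you flag as ``the hard part'': your premise that the two strands in an odd twist column are parallel is false for the positively oriented all-odd pretzel, and the workaround you build on it is both unnecessary and incorrect. In $P(-p,-q,-r)$ with all parameters odd, tracing the orientation shows that in every band one strand runs up and the other runs down, i.e.\ the strands are \emph{anti-parallel} in every column --- exactly as in the all-even case. (You can also see this from the reduced $A$-state graph: the length-$p$ path of the theta graph has $p-1$ interior vertices, which are Seifert circles trapped inside the column, and such circles only arise from anti-parallel strands; equivalently, all-odd pretzel knots have genus $1$.) Consequently the oriented (Conway) smoothing of a crossing in the $p$-column cuts the band and wipes out the whole column, yielding $T(2,q+r)$, and the telescoping computation of Lemma \ref{pretzel conway all evens} goes through verbatim with base case $P(-1,-1,-1)=3_1$. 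That is precisely what the paper does.

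The recursion you propose instead, $\nabla(P(-p,-q,-r)) = \nabla(P(-(p-2),-q,-r)) + z\,\nabla(P(-(p-1),-q,-r))$, cannot be right on purely formal grounds: smoothing a crossing changes the number of link components by one, so the smoothing of the all-odd pretzel \emph{knot} must be a two-component link, whereas $P(-(p-1),-q,-r)$ (exactly one even parameter) is a knot. Numerically it also fails: for $P(-3,-1,-1)=5_2$ your recursion would give $\nabla = (1+z^2) + z(1+z^2) = 1+z+z^2+z^3$, while $\nabla(5_2)=1+2z^2$, matching $\frac{pq+pr+qr+1}{4}=2$. Your secondary induction on mixed-parity pretzels is therefore solving a problem that does not arise, and it is internally inconsistent besides (you invoke the ``wipes out'' principle for an odd column of $P(-(p-1),-q,-r)$ immediately after arguing that odd columns cannot be wiped out). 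One last small correction: the $+1$ in the numerator traces to the $z^2$-coefficient of $\nabla(3_1)=1+z^2$, not to its constant term, since it is the leading (degree-two) coefficient that is being tracked.
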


\begin{proof}

Let $D$ be a $(k_1, k_2)$-Oddly Burdened diagram with reduced $A$-state graph $G$ in which the two holes are bounded by $x$ shared edges, with odd $x\geq 1$. 

By our Lemma \ref{contract cut edges, same result}, we know that the leading Conway coefficient of $D$ will be the same as the leading Conway coefficient of the pretzel link $P(-p, -q, -r)$ where $p=k_1-x$, $q=x$, and $r=k_2-x$. (We also note that $P$ is invariant under any permutation of the three arguments, so it really does not matter which value we choose to be $x$.)

So, it suffices to find that leading coefficient, and show it is equal to the one given above. 

    \begin{claim}
        Let $p, q,r$ be odd integers $\geq 1$. Let $\nabla(-p,-q,-r)$ be the Conway polynomial for the $P(-p,-q,-r)$ pretzel link, given a positive orientation. Then 
        $$\nabla(-p,-q,-r) = 1 + \Big( \frac{pq + pr + qr + 1}{4}\Big) z^2.$$
    \end{claim}

    \begin{proof}
        This follows directly from the Conway skein relation, and the following facts: 
        \begin{itemize}
            \item Changing one of the crossings in the $p$-twist column gives us an almost-positive diagram of the $P(-(p-2), -q, -r)$ pretzel link. 
            \item Smoothing one of the crossings in the $p$-twist wipes out that whole column, and gives us the $T(2, q+r)$ torus link, which by Lemma \ref{torus links conway polynomial} has Conway polynomial $\frac{q+r}{2}$. 
            \item $P(-1,-1,-1)$ is the trefoil, with $\nabla = 1+z^2$. 
        \end{itemize}

        Thus 
        \begin{align*}
            \nabla(-p, -q, -r)
            & = \nabla(-1, -q, -r) + \Big(\frac{p-1}{2}\Big) z \nabla(T(2, q+r))\\
            & = \nabla(-1, -1, -r) + \Big(\frac{q-1}{2}\Big) z \nabla(T(2, 1+r)) + \Big(\frac{p-1}{2}\Big) \Big(\frac{q+r}{2}\Big)z^2\\
            & = \nabla(-1,-1,-1) + \Big(\frac{r-1}{2}\Big) z \nabla(T(2,2)) + \Big(\frac{q-1}{2}\Big) \Big(\frac{r+1}{2}\Big) z^2 + \Big(\frac{p-1}{2}\Big) \Big(\frac{q+r}{2}\Big)z^2\\
            & = 1 + z^2 + \Big(\frac{r-1}{2}\Big) z^2 + \Big(\frac{q-1}{2}\Big) \Big(\frac{r+1}{2}\Big) z^2 + \Big(\frac{p-1}{2}\Big) \Big(\frac{q+r}{2}\Big) z^2 \\
            & = 1 + \frac{pq + pr + qr + 1}{4}z^2.
        \end{align*}
    \end{proof}

Now, let us substitute: $k_1 = p+q$, $k_2= r+ q$, and $x=q$. Then:
\begin{align*}
    \frac{k_1k_2 - x^2 + 1}{4 } & = \frac{(p+q)(r+q) - (q)^2 + 1}{4}\\
    & = \frac{pr + pq + qr + q^2 - q^2 + 1}{4}\\
    & = \frac{pq + pr + qr + 1}{4},
\end{align*}
 as desired. 

 \end{proof}





    \section{Application}\label{Application section}

In \cite{Buchanan_2022}, we showed that the positivity obstruction developed for the case of fibered positive knots could be used to show that seven particular knots are not positive, and are instead almost-positive. These were the last remaining knots of crossing number $\leq 12$ for which positivity was unknown. Independently and around the same time, Stoimenow compiled a list \cite{StListofPosKnots} of all non-alternating positive knots up to $15$, also settling the question of those remaining seven knots through a different method.

In Section \ref{coeff 0 family section} we give another example of an almost-positive knot diagram whose non-positivity is proved by failing the test of Theorem \ref{main result for coeff 0}. From this $16$-crossing diagram, we can construct an infinite family of examples of almost-positive knots, whose non-positivity is shown by failing the test of Theorem \ref{main result for coeff 0} in the exact same way. 

In Sections \ref{coeff 1 family section} and \ref{coeff 2 family section} we similarly construct infinite families of knots whose non-positivity is shown by failing the tests of Theorem \ref{main result coeff 1 and 2}. 

We would like to mention that KnotFolio was an extremely helpful tool for constructing such examples \cite{KnotFolio}.

    \subsection{An Infinite Family of Almost-Positive Knots with Second Jones Coefficient equal to $0$}\label{coeff 0 family section}

    \input{Figures/Knot_and_Fam_coeff_0}

    In Figure \ref{fig:Knot coeff 0}, we see a $16$-crossing almost-positive link diagram.  We can compute the Jones polynomial and the HOMFLY polynomial (provided for the interested reader) with Regina \cite{Regina}, and find that they are:

    \begin{itemize}
        \item Jones polynomial: $V = t^4 -3t^6 + 12t^7 - 24t^8 +38t^9 - 49t^{10} + 56t^{11} -56t^{12} + 48t^{13} -37t^{14} + 23t^{15} -12t^{16} + 5t^{17} - t^{18}$
        \item HOMFLY polynomial $P = \alpha^{-8} z^8 + 8 \alpha^{-8} z^6 + 17 \alpha^{-8} z^4 + 13 \alpha^{-8} z^2 + 3 \alpha^{-8} + 3 \alpha^{-10} z^6 + 5 \alpha^{-10} z^4 - 2 \alpha^{-10} + 4 \alpha^{-12} z^6 + 10 \alpha^{-12} z^4 + 11 \alpha^{-12} z^2 + 6 \alpha^{-12} - 8 \alpha^{-14} z^4 - 18 \alpha^{-14} z^2 - 11 \alpha^{-14} + 5 \alpha^{-16} z^2 + 6 \alpha^{-16} - \alpha^{-18}$
    \end{itemize}

    The second Jones coefficient is $0$. If this diagram represented a positive link, then Theorem \ref{main result for coeff 0} tells us that we would have $\max \deg V \leq 4\min \deg V$. However, 
    $$\max \deg V = 18 \nleq 16 = 4\min \deg V,$$ so the knot (identified by the KnotFinder tool as $16_n125409$ \cite{knotfinder}) cannot be positive.

      We can use this knot to generate an infinite family of knots that can be shown not to be positive by using Theorem \ref{main result for coeff 0} in the same way. We create a new diagram $D_w$ by adding $w$ copies of a three-crossing loop to the upper left corner of our original diagram $D_0$, as shown in Figure \ref{fig:Family coeff 0}, so that the crossing number of our new diagram is $c(D_w) = c(D_0) + 3w.$

      \begin{claim}\label{claims for family coeff 0}
       Let $V(w)$ be the Jones polynomial of $D_w$, and let $\nabla(w)$ be its Conway polynomial. Then 
      \begin{enumerate}
          \item $\min \deg V(w) = \min \deg V(0) + w$
          \item $\max \deg V(w) = \max \deg V(0) + 4w$
          \item The second Jones coefficient of $V(w) = $ the second Jones coefficient of $V(0)$
      \end{enumerate}
      \end{claim}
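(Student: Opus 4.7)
The plan is to prove items (1), (2), (3) jointly by induction on $w \geq 0$, with the base case $w=0$ being tautological. For the inductive step I would isolate the local effect of inserting one more three-crossing loop (the triangular sub-tangle indicated by the red box in Figure \ref{fig:Family coeff 0}) into $D_{w-1}$ to produce $D_w$. The first step is diagram accounting: by direct inspection, each added loop contributes $+3$ to the crossing number, $+3$ to the writhe (all three new crossings are positive), $+1$ to the number of $A$-circles, and $+2$ to the number of $B$-circles. Since the unique negative crossing of $D_w$ is the one inherited from $D_0$ and sits outside the loop region, each loop behaves locally exactly as in a positive diagram.

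For item (1), I would invoke Stoimenow's Proposition \ref{Stoimenow almost pos Conway degree min genus}, which gives $2\min\deg V(w) = 1 - \chi(L_w)$. The Seifert surface produced by Seifert's algorithm (noting that on the positive companion of $D_w$, $A$-smoothing equals Seifert smoothing) has Euler characteristic shifting by $\Delta A - \Delta c = 1 - 3 = -2$ per added loop, so $1 - \chi$ grows by $2$ per loop and $\min\deg V(w) - \min\deg V(w-1) = 1$. Alternatively, one can argue directly from Equation \ref{min degree pos} applied to the positive companion $(D_w)_+$, combined with Stoimenow's Lemma \ref{stoimenow's lemma} to track how $\min\deg V$ changes when the single distinguished crossing is flipped.

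For item (2), apply the bound of Equation \ref{standard bound}: $\max\deg V \leq c(D) + (B_D - 1)/2$, and note that the right-hand side increases by $3 + 1 = 4$ per loop. To promote the inequality to an equality across the family, one verifies equality for $D_0$ by direct computation (using the Jones polynomial given in the text) and then shows inductively that the Kauffman-bracket contribution from the all-$B$ smoothing of the new loop does not cancel with other states: because the loop attaches to $D_{w-1}$ along exactly two strands, the two newly created $B$-circles are distinct from the existing ones, so the highest-degree monomial of $\langle D_w \rangle$ is a nonzero multiple of that of $\langle D_{w-1} \rangle$ up to a monomial factor absorbed into the writhe correction.

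For item (3), the second Jones coefficient, I would expand the Kauffman bracket of $D_w$ by splitting the bracket of the loop tangle over the $A$/$B$ choice at each of its three crossings. The key observation is that exchanging a single $B$-smoothing for an $A$-smoothing inside the triangle merges two of the $B$-circles into one, causing a drop of at least two in the bracket degree via loss of an $(-A^2-A^{-2})$ factor. Consequently only the all-$B$ state of the loop contributes at the top two degrees, and these top two degrees of $\langle D_w \rangle$ are scalar multiples (by a fixed monomial in $A$) of the top two degrees of $\langle D_{w-1}\rangle$; combined with the writhe shift this shows both the leading \emph{and} sub-leading Jones coefficients are preserved. The hardest step will be pinning down this cancellation rigorously: one must enumerate the eight local smoothings of the triangle, compute the resulting circle counts in each case, and check that every state other than the all-$B$ state drops at least two degrees in the bracket. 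Once that local calculation is in hand, parts (1)--(3) follow by the induction.
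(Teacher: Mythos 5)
Your parts (1) and (2) are essentially the paper's own argument: Stoimenow's lemma for a type~I almost-positive diagram gives $2\min\deg V(w)=1-\chi(D_w)=c(D_w)-s(D_w)+1$, and $B$-adequacy (which is what your ``non-cancellation of the new $B$-circles'' check amounts to) upgrades the bound $\max\deg V\leq c(D)+\frac{B_D-1}{2}-\frac{3q}{2}$ to an equality; both right-hand sides shift by $w$ and $4w$ respectively. The gap is in part (3). First, there is an $A$/$B$ mix-up: the second Jones coefficient in this paper sits at $t^{\min\deg V+1}$, which under $A=t^{-1/4}$ corresponds to the \emph{top} $A$-degrees of the Kauffman bracket, i.e.\ to the all-$A$ state and its one-switch neighbours. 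The all-$B$ state and the $B$-circles you analyze control $\max\deg V$ (your part (2)), not the second coefficient. Second, and more seriously, even after swapping the letters your degree estimate does not close the argument. For a connected diagram the bracket is supported in $A$-degrees congruent mod $4$, and switching a single smoothing in the extreme state lowers the maximal attainable bracket degree by exactly $4$ when two state circles merge. A drop of $4$ in $A$-degree is exactly \emph{one} step in $t$-degree, so the three single-switch states of the triangle contribute precisely at the second-coefficient position of $V$. They cannot be discarded, and the top two bracket degrees of $\langle D_w\rangle$ are therefore not a monomial multiple of the top two of $\langle D_{w-1}\rangle$. Your proposed local verification (``every state other than the extreme one drops at least two degrees'') would need to be ``drops at least eight degrees,'' which is false.

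The paper sidesteps this by a genuinely different route: it applies the HOMFLYPT skein relation twice inside the added loop to obtain $\alpha^{2}P(w)=(1+z^{2})P(w-1)+\bigl(P(D_{0+})-\alpha^{-2}P(0)\bigr)P(3_1)^{w-1}$, specializes to the Jones polynomial, and tracks the two lowest coefficients through the recursion, using that $\min\deg V(D_{0+})=\min\deg V(D_0)$ (Stoimenow's lemma again) and that the lowest Jones coefficient of a positive or almost-positive knot equals $1$. If you want to keep a state-sum proof of (3), you would have to compute the single-switch contributions explicitly and show they cancel against the change in the all-$A$ state's second-order term; that is essentially re-deriving Stoimenow's second-coefficient formula, and it is delicate here precisely because $D_w$ is almost-positive rather than positive.
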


      If our claims are true, then the second Jones coefficient of $D_w$ is $0$, and yet
      \begin{align*}
      \max \deg V(w) &= \max \deg V(0) + 4w\\
      & = 18 + 4w\\
      & \nleq 16 + 4w\\
      &= 4 \min \deg V(0)+ 4w \\
      & = 4 \min \deg V(w).
      \end{align*}
      
      so by Theorem \ref{main result for coeff 0}, $D_w$ cannot be a diagram of a positive knot. 

The proof of these is identical to the proofs in the next section.
    
    \subsection{An Infinite Family of Almost-Positive Knots with Second Jones Coefficient equal to $-1$}\label{coeff 1 family section}

    \input{Figures/Knot_and_Fam_coeff_1}

    In Figure \ref{fig:Knot coeff 1} we see a $15$-crossing almost-positive diagram (the negative crossing is circled). 

    Using the DT code for the pictured diagram $$[4, 10, 30, 20, 2, 24, 22, -26, -14, 8, 28, 12, -18, -16, 6]$$ 
    and inputting into Regina \cite{Regina}, we obtain the Jones polynomial and the HOMFLY polynomial, from which we can obtain the Conway polynomial under the substitution $P(a=1, z) = \nabla(z)$. 

    We have: 
    \begin{itemize}
        \item Jones polynomial: $V = t^3 - t^4 + 2t^5 - t^6 - t^7 + 5t^8 -8t^9 +11t^{10} -13t^{11} + 12 t^{12} - 10 t^{13} + 6t^{14} - 3t^{15} + t^{16}$
        \item HOMFLY polynomial: $P = a^{-6} z^6 + 5 a^{-6}  z^4 + 6 a^{-6}  z^2 + 2 a^{-6}  + a^{-8}  z^6 + 5 a^{-8} z^4 + 3 a^{-8}  z^2 + 2 a^{-10}  z^4 + a^{-10} z^2 + 2 a^{-12} z^4 + a^{-12}z^2 - 3 a^{-14} z^2 - 2 a^{-14} + a^{-16}$
        \item Conway polynomial: $\nabla = 1 + 8z^2 + 14z^4 + 2z^6$
      \end{itemize}

      Then $$ \max \deg V = 16 \nleq 14 = 4 \min \deg V + 2 \lead \coeff \nabla -2, $$ so by Theorem \ref{main result coeff 1 and 2}, this knot (identified by the KnotFinder tool as $15_n11331$ \cite{knotfinder}) is not positive, and is thus an almost-positive knot, since we do have an almost-positive diagram.

      We can use this knot to generate an infinite family of knots that can be shown not to be positive by using Theorem \ref{main result coeff 1 and 2} in the same way. We create a new diagram $D_w$ by adding $w$ copies of a three-crossing loop to the upper left corner of our original diagram $D_0$, as shown in Figure \ref{fig:Family coeff 1}, so that the crossing number of our new diagram is $c(D_w) = c(D_0) + 3w.$

      \begin{claim}\label{claims for family coeff 1}
       Let $V(w)$ be the Jones polynomial of $D_w$, and let $\nabla(w)$ be its Conway polynomial. Then
      \begin{enumerate}
          \item $\min \deg V(w) = \min \deg V(0) + w$ \label{claims for family coeff 1 part 1}
          \item $\max \deg V(w) = \max \deg V(0) + 4w$ \label{claims for family coeff 1 part 2}
          \item The second Jones coefficient of $V(w) = $ the second Jones coefficient of $V(0)$ \label{claims for family coeff 1 part 3}
          \item $\lead \coeff \nabla(w) = \lead \coeff \nabla(0)$. \label{claims for family coeff 1 part 4}
      \end{enumerate}
      \end{claim}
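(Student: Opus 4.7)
My plan is to show that each added three-crossing loop is, at the level of link type, a connect-summand equivalent to the positive trefoil $3_1^+$, and then to deduce all four parts from the resulting product formulas
\[
V(w) \;=\; V(0)\cdot (t + t^3 - t^4)^w, \qquad \nabla(w) \;=\; \nabla(0)\cdot (1 + z^2)^w,
\]
obtained by multiplicativity of the Jones and Conway polynomials under connect sum together with $V(3_1^+) = t + t^3 - t^4$ and $\nabla(3_1^+) = 1 + z^2$.

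To establish the connect-sum claim, I will inspect the layout of Figure~\ref{fig:Family coeff 1} and argue that each successive three-crossing loop added to the upper-right region is enclosed by a 2-sphere meeting the knot in exactly two points, with a trefoil summand on one side of the sphere and the rest of the diagram on the other. Iterating yields $D_w \cong D_{w-1}\,\#\,3_1^+$, and hence $D_w \cong D_0\,\#\,(3_1^+)^{\# w}$. The two product formulas follow, and the four parts of the claim drop out by elementary polynomial arithmetic: part~(\ref{claims for family coeff 1 part 1}) because $(t+t^3-t^4)^w = t^w(1+t^2-t^3)^w$ has lowest nonzero term $t^w$ with coefficient $+1$; part~(\ref{claims for family coeff 1 part 2}) because its highest nonzero term is $(-1)^w t^{4w}$; part~(\ref{claims for family coeff 1 part 3}) because $(1+t^2-t^3)^w$ has vanishing coefficient of $t^1$, so in the product $V(0)\cdot(t+t^3-t^4)^w$ the second Jones coefficient is exactly that of $V(0)$, multiplied by the leading $+1$; and part~(\ref{claims for family coeff 1 part 4}) because $(1+z^2)^w$ is monic of degree $2w$, so multiplication preserves the leading Conway coefficient.

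The main obstacle is the topological identification of each added loop as a trefoil connect summand rather than a tangle knotted nontrivially into the remainder of the diagram. If the direct isotopy argument is not transparent from the picture, my backup plan is a direct inductive skein computation: pick one crossing of the outermost added loop, apply the Jones and the Conway skein relations at that crossing, and verify that the resulting simpler diagrams reduce respectively to a diagram of the form $D_{w-1}$ and to a diagram whose polynomials are computable from the $D_{w-1}$ case. This should yield precisely the recurrences $V(w) = (t+t^3-t^4)\,V(w-1)$ and $\nabla(w) = (1+z^2)\,\nabla(w-1)$, giving the same product formulas and the same four conclusions by induction on $w$. Finally, because the coefficient-$0$ family construction differs only cosmetically, the same plan (via connect-summing with $3_1^+$) proves the earlier Claim~\ref{claims for family coeff 0} as well, as the paper notes.
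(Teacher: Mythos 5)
Your central claim---that each added three-crossing loop is a connect summand equivalent to $3_1^+$, so that $V(w)=V(0)\cdot V(3_1)^w$ and $\nabla(w)=\nabla(0)\cdot(1+z^2)^w$---is false, and both your primary plan and your backup rest on it. The boxed region in Figure \ref{fig:Family coeff 1} meets the rest of the diagram in \emph{four} arcs (two running to the crossing at $e$, one to $o$, one to $q$), so the relevant sphere meets the knot in four points, not two: the loop is inserted as a $2$-string tangle, not a connect summand. Carrying out the skein computation you propose as a backup does \emph{not} yield $V(w)=(t+t^3-t^4)V(w-1)$; it yields (after a two-layer HOMFLY computation as in Figure \ref{fig:Family skein})
\begin{equation*}
t^{-2}V(w)=(t^{-1}-1+t)\,V(w-1)+\bigl(V(D_{0+})-t^2V(0)\bigr)V(3_1)^{w-1},
\qquad
\nabla(w)=(1+z^2)\nabla(w-1)+z\,\nabla(D_{00})\,\nabla(3_1)^{w-1},
\end{equation*}
where $D_{0+}$ and $D_{00}$ are the positive resolutions of the negative crossing of $D_0$. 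Your product formula is equivalent to the vanishing of the inhomogeneous terms, i.e.\ to $V(D_{0+})=V(0)$ and $\nabla(D_{00})=0$; but $D_{00}$ is a positive $14$-crossing link diagram, so $\nabla(D_{00})$ is a nonzero positive polynomial of degree $5$. Since the Conway polynomial is multiplicative under connect sum, $\nabla(1)\neq(1+z^2)\nabla(0)$ already shows $D_1\neq D_0\,\#\,3_1$.

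The four conclusions do still hold, but proving them requires controlling the degree and leading behaviour of the correction terms, which is where all the real work lies: one needs $\min\deg V(D_{0+})=\min\deg V(0)$ (from Lemma \ref{stoimenow's lemma}, since the negative crossing is of type I) to see that the inhomogeneous term enters $V(w)$ only at the level of the second coefficient and contributes $+1-1=0$ there, and one needs $\deg\bigl(z\,\nabla(D_{00})\nabla(3_1)^{w-1}\bigr)=6+2(w-1)<2w+\deg\nabla(0)$ to see that it cannot disturb the leading Conway coefficient. Moreover, parts (\ref{claims for family coeff 1 part 1}) and (\ref{claims for family coeff 1 part 2}) are not obtained from any product formula at all: the minimal degree is computed from $2\min\deg V(w)=c(D_w)-s(D_w)+1$ via the type-I case of Lemma \ref{stoimenow's lemma}, and the maximal degree from $B$-adequacy of $D_w$ via \ref{B-adequate}, using $c(D_w)=c(D_0)+3w$, $s(D_w)=s(D_0)+w$, $B_w=B_0+2w$. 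So the gap is not cosmetic: the connect-sum identification would have to be replaced by the genuine skein recurrence together with these separate degree estimates.
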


      If our claims are true, then the second Jones coefficient of $D_w$ is $-1$, and yet

      \begin{align*}
      \max \deg V(w) &= \max \deg V(0) + 4w\\
      & = 16 + 4w\\
      & \nleq 14 + 4w\\
      &= 4 \min \deg V(0) + 2\lead \coeff \nabla(0) - 2 + 4w \\
      & = 4 \min \deg V(w) + 2\lead \coeff \nabla (w) -2.
      \end{align*}
      
      so by Theorem \ref{main result coeff 1 and 2}, $D_w$ cannot be a diagram of a positive knot. 

      \newpage

      \subsubsection{Proof of Claim \ref{claims for family coeff 1}, part (\ref{claims for family coeff 1 part 1})}

      \begin{proof}

    Since there is no other crossing in $D_w$ connecting the same pair of Seifert circles as the negative crossing, we have by Stoimenow's Lemma \ref{stoimenow's lemma} that $2\min \deg V(w) = 1-\chi(D_w)$, and thus 
          \begin{align*}
              2\min \deg V(w) &= c(D_w) - s(D_w) + 1 \\
              &= (c(D_0) + 3w) - (s(D_0) + w) + 1\\
              &= c(D_0) - s(D_0) + 1 + 2w \\
              &= 2\min \deg V(0) + 2w,
          \end{align*}
          So indeed $\min \deg V(w) = \min \deg V(0) + w$. 

           \end{proof}

         \subsubsection{Proof of Claim \ref{claims for family coeff 1}, part \ref{claims for family coeff 1 part 2}}

         \begin{proof}

         Observe that each $D_w$, including $D_0$, is $B$-adequate, so a result of \ref{B-adequate} says that 
          \begin{align*}
             \max \deg V(w) &= c(D_w) + \frac{B_{w} - 1}{2} - \frac{3}{2}\\
             & = (c(D_0) + 3w) + \frac{(B_0 + 2w) - 1}{2} - \frac{3}{2}\\
             & = c(D_0) + \frac{B_0 - 1}{2} - \frac{3}{2} + 4w\\
             & = \max \deg V(0) + 4w.
          \end{align*}

           \end{proof}
         
       \subsubsection{Proof of Claim \ref{claims for family coeff 1}, parts (\ref{claims for family coeff 1 part 3}) and (\ref{claims for family coeff 1 part 4})}

         Parts $(3)$ and $(4)$ are about the second Jones coefficient and the leading Conway coefficient, and we can get information about both from the HOMFLY(PT) polynomial. Since we need both, we first look at the skein relation of the HOMFLY(PT) polynomial, and then we will specialize to each case. 

         \begin{proof}

  \begin{figure}
    \centering
  \begin{tikzpicture}
      \begin{scope}[scale=0.43, >=Stealth, thin]

\begin{scope}[]
      \begin{scope}[xshift=0cm]
 
        \node (e) at (2.5, 3.5){};
        \node (ee) at (4, 3.5){};
        
       \node (r) at (3,2){};
       \node (s) at (4,3) {};
       \node (t) at (4,1){};

        \node (rr) at (3,-0.5){};
       \node (ss) at (4,0.5) {};
       \node (tt) at (4,-1.5){};
       
       \node (o) at (2.5, -3.5){};
       \node (q) at (4, -3.5){};

        \node () at (1,1) {$D_w$};

        \draw [] 
       (ee.center) to (s.center);
       \draw [] 
       (t) to (ss.center);
       \draw [out = -40, in = -140, relative, decoration={markings, mark=at position 0.75 with {\arrow{<}}}, postaction={decorate}] 
       (r.center) to (rr);
       \draw [ ] 
       (r.center) to (s);
       \draw [decoration={markings, mark=at position 0.5 with {\arrow{>}}}, postaction={decorate}] 
       (s.center) to (t);
       \draw [out=90, in=60, relative, decoration={markings, mark=at position 0.5 with {\arrow{>}}}, postaction={decorate}] 
       (s) to (t.center);
       \draw [ ] 
       (t.center) to (r);
       \draw [out=10, in = 170, relative, decoration={markings, mark=at position 0.75 with {\arrow{<}}}, postaction={decorate}] 
       (e.center) to (r);

       \draw [ ] 
       (tt) to (q.center);
       \draw [out = -35, in =170, relative, decoration={markings, mark=at position 0.75 with {\arrow{<}}}, postaction={decorate}] 
       (rr.center) to (o.center);
       \draw [ ] 
       (rr.center) to (ss);
       \draw [decoration={markings, mark=at position 0.5 with {\arrow{>}}}, postaction={decorate}] 
       (ss.center) to (tt);
       \draw [out=90, in=60, relative, decoration={markings, mark=at position 0.5 with {\arrow{>}}}, postaction={decorate}] 
       (ss) to (tt.center);
       \draw [] 
       (tt.center) to (rr);

       \filldraw[color=white]
       (3.3,-2.5) circle (0.8);

       \node (dots) at (3.4,-2.5){$\dots$};

       \draw[   red] (1.8, 3.5) -- (5, 3.5) -- (5, -3.5) -- (1.8, -3.5) -- (1.8, 3.5);

        \node (arrowtail) at (6.5,3){};
       \node (arrowhead) at (5, 3){};

       \draw[thick, cyan, ->]
       (arrowtail) -- (arrowhead);

      \end{scope}
\begin{scope}[xshift=3cm, yshift = -4.4cm]
\draw[->]
(0,0 ) to (-1.5,-1.5)
node[above, yshift=6pt]{$-$};
\draw[->]
(1, 0 ) to (2.5, -1.5)
node[above, xshift=4pt, yshift=7pt]{\tiny{smooth}};
    
\end{scope}
\end{scope}

\begin{scope}[yshift=-10cm]
      \begin{scope}[xshift=-4cm]
 
        \node (e) at (2.5, 3.5){};
        \node (ee) at (4, 3.5){};
        
       \node (r) at (3,2){};
       \node (s) at (4,3) {};
       \node (t) at (4,1){};

        \node (rr) at (3,-0.5){};
       \node (ss) at (4,0.5) {};
       \node (tt) at (4,-1.5){};
       
       \node (o) at (2.5, -3.5){};
       \node (q) at (4, -3.5){};

        \node () at (0.7,1) {$D_{w-1}$};

        \draw [] 
       (ee.center) to (ss.center);
       
        \draw [out = -5, in = -170, relative, decoration={markings, mark=at position 0.75 with {\arrow{<}}}, postaction={decorate}] 
       (e.center) to (rr);

       \draw [] 
       (tt) to (q.center);
       \draw [out = -35, in =170, relative, decoration={markings, mark=at position 0.75 with {\arrow{<}}}, postaction={decorate}] 
       (rr.center) to (o.center);
       \draw [ ] 
       (rr.center) to (ss);
       \draw [decoration={markings, mark=at position 0.5 with {\arrow{>}}}, postaction={decorate}] 
       (ss.center) to (tt);
       \draw [out=90, in=60, relative, decoration={markings, mark=at position 0.5 with {\arrow{>}}}, postaction={decorate}] 
       (ss) to (tt.center);
       \draw [] 
       (tt.center) to (rr);

       \filldraw[color=white]
       (3.3,-2.5) circle (0.8);

       \node (dots) at (3.4,-2.5){$\dots$};

       \draw[   red] (1.8, 3.5) -- (5, 3.5) -- (5, -3.5) -- (1.8, -3.5) -- (1.8, 3.5);

      \end{scope}

  \begin{scope}[xshift=4cm]
 
        \node (e) at (2.5, 3.5){};
        \node (ee) at (4, 3.5){};
        
       \node (r) at (3,2){};
       \node (s) at (3.8,2.5) {};
       \node (t) at (4,1){};

        \node (rr) at (3,-0.5){};
       \node (ss) at (4,0.5) {};
       \node (tt) at (4,-1.5){};
       
       \node (o) at (2.5, -3.5){};
       \node (q) at (4, -3.5){};

       \node () at (6.5, 1){$D_{wS}$};

       \draw [] 
       (t) to (ss.center);
       \draw [out = -25, in = -140, relative, decoration={markings, mark=at position 0.75 with {\arrow{<}}}, postaction={decorate}] 
       (r.center) to (rr);
       \draw [out = 30, in = 90, relative] 
       (r.center) to (s.center);
        \draw [out = 15, in = 170, relative] 
       (s.center) to (t);
       \draw [out=1, in= 60, relative, decoration={markings, mark=at position 0.5 with {\arrow{>}}}, postaction={decorate}] 
       (ee.center) to (t.center);
       \draw [ ] 
       (t.center) to (r);
       \draw [out=10, in = 170, relative, decoration={markings, mark=at position 0.75 with {\arrow{<}}}, postaction={decorate}] 
       (e.center) to (r);

       \draw [ ] 
       (tt) to (q.center);
       \draw [out = -35, in =170, relative, decoration={markings, mark=at position 0.75 with {\arrow{<}}}, postaction={decorate}] 
       (rr.center) to (o.center);
       \draw [ ] 
       (rr.center) to (ss);
       \draw [decoration={markings, mark=at position 0.5 with {\arrow{>}}}, postaction={decorate}] 
       (ss.center) to (tt);
       \draw [out=90, in=60, relative, decoration={markings, mark=at position 0.5 with {\arrow{>}}}, postaction={decorate}] 
       (ss) to (tt.center);
       \draw [] 
       (tt.center) to (rr);

       \filldraw[color=white]
       (3.3,-2.5) circle (0.8);

       \node (dots) at (3.4,-2.5){$\dots$};

       \draw[   red] (1.8, 3.5) -- (5, 3.5) -- (5, -3.5) -- (1.8, -3.5) -- (1.8, 3.5);

       


      \end{scope}

\begin{scope}[xshift=7cm, yshift = -4.4cm]
\draw[->]
(0,0 )to (-1,-1.5)
node[above, yshift=6pt]{$-$};
\draw[->]
(1, 0 ) to (2, -1.5)
node[above, xshift=7pt, yshift=7pt]{\tiny{smooth}};
    
\end{scope}

\end{scope}

\begin{scope}[xshift=4cm, yshift=-20cm]

      \begin{scope}[xshift=-3cm]
 
        \node (e) at (2.5, 3.5){};
        \node (ee) at (4, 3.5){};
        
       \node (r) at (3,2){};
       \node (s) at (4,3) {};
       \node (t) at (4,1){};

        \node (rr) at (3,-0.5){};
       \node (ss) at (4,0.5) {};
       \node (tt) at (4,-1.5){};
       
       \node (o) at (2.5, -3.5){};
       \node (q) at (4, -3.5){};

       \node () at (-1.3,1) {$D_{00} \# \underbrace{3_1 \# \dots \# 3_1}_{w-1 \text{ copies}}$};

        \draw [-] 
       (e.center) .. controls (r) and (s).. (ee.center);
       
        \draw [ ] 
       (rr) .. controls (r) and (t).. (ss.center);

       \draw [ ] 
       (tt) to (q.center);
       \draw [out = -35, in =170, relative, decoration={markings, mark=at position 0.75 with {\arrow{<}}}, postaction={decorate}] 
       (rr.center) to (o.center);
       \draw [ ] 
       (rr.center) to (ss);
       \draw [decoration={markings, mark=at position 0.5 with {\arrow{>}}}, postaction={decorate}] 
       (ss.center) to (tt);
       \draw [out=90, in=60, relative, decoration={markings, mark=at position 0.5 with {\arrow{>}}}, postaction={decorate}] 
       (ss) to (tt.center);
       \draw [] 
       (tt.center) to (rr);

       \filldraw[color=white]
       (3.3,-2.5) circle (0.8);

       \node (dots) at (3.4,-2.5){$\dots$};

       \draw[   red] (1.8, 3.5) -- (5, 3.5) -- (5, -3.5) -- (1.8, -3.5) -- (1.8, 3.5);

      \end{scope}

   \begin{scope}[xshift=3cm]
 
        \node (e) at (2.5, 3.5){};
        \node (ee) at (4, 3.5){};
        
       \node (r) at (3,2){};
       \node (s) at (4,3) {};
       \node (t) at (4,1){};

        \node (rr) at (3,-0.5){};
       \node (ss) at (4,0.5) {};
       \node (tt) at (4,-1.5){};
       
       \node (o) at (2.5, -3.5){};
       \node (q) at (4, -3.5){};

       \node () at (6.1,1) {$D_{w-1}$};

        \draw [] 
       (ee.center) to (ss.center);
       
        \draw [out = -5, in = -170, relative, decoration={markings, mark=at position 0.75 with {\arrow{<}}}, postaction={decorate}] 
       (e.center) to (rr);

       \draw [ ] 
       (tt) to (q.center);
       \draw [out = -35, in =170, relative, decoration={markings, mark=at position 0.75 with {\arrow{<}}}, postaction={decorate}] 
       (rr.center) to (o.center);
       \draw [ ] 
       (rr.center) to (ss);
       \draw [decoration={markings, mark=at position 0.5 with {\arrow{>}}}, postaction={decorate}] 
       (ss.center) to (tt);
       \draw [out=90, in=60, relative, decoration={markings, mark=at position 0.5 with {\arrow{>}}}, postaction={decorate}] 
       (ss) to (tt.center);
       \draw [] 
       (tt.center) to (rr);

       \filldraw[color=white]
       (3.3,-2.5) circle (0.8);

       \node (dots) at (3.4,-2.5){$\dots$};

       \draw[   red] (1.8, 3.5) -- (5, 3.5) -- (5, -3.5) -- (1.8, -3.5) -- (1.8, 3.5);

      \end{scope}

\end{scope}
 
           \end{scope}
  \end{tikzpicture}
    \caption{}
    \label{fig:Family skein}
\end{figure}

Let $D_{00}$ be the diagram obtained by smoothing the negative crossing in $D_0$, and let $D_{0+}$ be the diagram obtained by making that crossing positive. Then by the HOMFLY(PT) skein relation, we have 
\begin{equation}\label{neg crossing skein}
    \alpha P(D_{0+}) - \alpha^{-1}P(D_0) = z P(D_{00}).
\end{equation} 

In Figure \ref{fig:Family skein}, we look at two \say{layers} of skein relations. The first layer is $D_w$ with distinguished positive crossing $p$ indicated by the blue arrow. Making $p$ negative will give us a diagram equivalent to $D_{w-1}$, and smoothing $p$ gives us a diagram equivalent to $D_{wS}$. For the next layer, we choose one of the two crossings in the top part of $D_{wS}$ shown in Figure \ref{fig:Family skein} to be the distinguished crossing, and we see that making that crossing negative gives us a diagram equivalent to $D_{00} \# 3_1 \# \dots \# 3_1$ (the connected sum of $D_{00}$ and $w-1$ copies of the trefoil), and smoothing that crossing gives us $D_{w-1}$ back again.
                    
        So, the first layer of the skein relation gives us
          \begin{equation} \label{HOMFLY skein 1}
              \alpha^{}P(w) - \alpha^{-1} P(w-1) = z P(D_{wS}), 
          \end{equation} 
       and the second layer gives us 
          \begin{equation}\label{HOMFLY skein 2 v1}
              \alpha^{}P(D_{wS}) - \alpha^{-1} P(D_{00} \# 3_1 \# \dots \# 3_1) = z P(w-1).
          \end{equation} 
Rearranging, multiplying by $z$, and using the fact that the HOMFLY(PT) polynomial is multiplicative over connected sums, we get 
            \begin{equation}\label{HOMFLY skein 2 v2}
              \alpha^{}z P(D_{wS}) = \alpha^{-1}z P(D_{00}) P(3_1)^{w-1} + z^2 P(w-1).
          \end{equation} 

          We can then express $\alpha^2 P(w)$ as: 
          \begin{align}
          \notag \alpha^{2} P(w) &= \alpha \Big ( \alpha^{-1} P (w-1) + z P(wS)\Big)  \notag &\text{ (by \ref{HOMFLY skein 1})} \\
        \notag &= P(w-1) + \alpha^{-1} zP(D_{00}) P(3_1)^{w-1} + z^2 P(w-1)  \notag &\text{ (by \ref{HOMFLY skein 2 v2})} \\
       \notag   & = (1+z^2) P(w-1) + \alpha^{-1} \Big(\alpha P(D_{0+}) - \alpha^{-1} P(D_0)\Big ) P(3_1)^{w-1} &\text{ (by \ref{neg crossing skein})}  \\
        & = (1+z^2) P(w-1) + \Big(P(D_{0+}) - \alpha^{-2} P(0)\Big ) P(3_1)^{w-1} 
         \label{HOMFLY skein 3}
          \end{align}

          Now, we specialize to each part. 

        When we specialize to the Jones polynomial via the substitution $P(\alpha = t^{-1}, z=t^{1/2} - t^{-1/2})$,  Equation \ref{HOMFLY skein 3} becomes 

          \begin{equation}\label{Jones skein for family}
             t^{-2} V(w) = (t^{-1} - 1 + t)V(w-1) + \Big( V(D_{0+}) - t^2 V(0)\Big)V(3_1)^{w-1}.
          \end{equation}

          In particular, for the base case of $w=1$ we have 

          \begin{equation}\label{Jones skein for family case w=1}
              t^{-2} V(1) = (t^{-1} - 1 + t)V(0) + \Big( V(D_{0+}) - t^2 V(0)\Big) = (t^{-1} - 1 +t - t^2)V(0) + V(D_{0+}).
          \end{equation}

          Since there are no (other) crossings in the almost-positive diagram $D_0$ that connect the same pair of Seifert circles as the negative crossing, it follows from Lemma \ref{stoimenow's lemma} that $\min \deg V_{D_0} = \min \deg V_{D_{0+}}$. Therefore, the first term in $V(1)$ is contributed entirely by $V(0)$, and the second coefficient of $V(1)$ is exactly the first coefficient of $V(0)$ subtracted from the sum of the second coefficient of $V(0)$ and the first coefficient of $V(D_{0+})$. But we know that the first coefficients of both $V(0)$ and $V(D_{0+})$ are $1$, since $D_{0}$ is almost-positive and $D_{0+}$ is positive. Hence the second coefficient of $V(1)$ is exactly the second coefficient of $V(0)$.

          Now assume for induction that the second coefficient of $V(w)$ is equal to the second coefficient of $V(0)$. We recall that $V(3_1)=t+t^3 - t^4$, and by part $(1)$ we know $\min \deg V(w)= \min \deg V(0) + w = 3+w.$ Then by \ref{Jones skein for family}, we can write 
          
          \begin{equation}\label{Jones skein for family v2}
             t^{-2} V(w+1) = \underbrace{(t^{-1} - 1 + t)V(w)}_{\min \deg = -1 + \min\deg V(0) + w} + \overbrace{\Big( V(D_{0+}) - t^2 V(0)\Big)V(3_1)^{w}}^{\min \deg = \min\deg V(0) +w}.
          \end{equation}

          Therefore, the first coefficient of $V(w+1)$ is exactly the first coefficient of $V(w)$, and the second coefficient of $V(w+1)$ is exactly the first coefficient of $V(w)$ subtracted from the sum of the second coefficient of $V(w)$ and the first coefficient of $V(D_{0+})$. But again, we know that the first coefficients of both $V(w)$ and $V(D_{0+})$ are $1$, since $D_{w}$ is almost-positive and $D_{0+}$ is positive. Hence the second coefficient of $V(w+1)$ is exactly the second coefficient of $V(w)$, which is equal to the second coefficient of $V(0)$ by our induction hypothesis. 

          Thus we have that each $D_w$ has the same second Jones coefficient as $D_0$, which was our third claim.

          To prove our last claim, we specialize \ref{HOMFLY skein 3} to the Conway polynomial, via the substitution $P(\alpha = 1,z)$: 

          \begin{equation}\label{Conway skein for family}
              \nabla(w) = (1+z^2) \nabla(w-1) + \Big( \nabla(D_{0+}) - \nabla(0)\Big) \nabla(3_1)^{w-1} = (1+z^2) \nabla(w-1) + \nabla(D_{00}) \nabla(3_1)^{w-1}. 
          \end{equation}

We observe that $D_{00}$ is a positive diagram with $14$ crossings and $10$ $A$-circles, so then $\deg \nabla(D_{00})= 14-10+1=5$. And so for the base case of $w=1$ we have 
          \begin{align*}
         \nabla(1)  &= \underbrace{(1+z^2) \nabla(0)}_{\deg = 2 + \deg \nabla(0)} + \underbrace{z \nabla(D_{00})}_{\deg = 1 + 5}. 
          \end{align*}
So for $w=1$, since our $\deg \nabla(0)=6$, we then have that $\deg \nabla(w) = 2w+ \deg \nabla(0)$ and $\lead \coeff \nabla(w) = \lead \coeff \nabla(0)$.

We claim that this is true for all $w\geq 1$, and prove it by induction. Suppose our claim is true for $w$. Then, since $\nabla(3_1) = 1+z^2$, 

  \begin{align*}
         \nabla(w+1)  &= \underbrace{(1+z^2) \nabla(w)}_{\deg = 2 + 2w + \deg \nabla(0) } + \overbrace{z \nabla(D_{00}) \big(\nabla(3_1)\big)^{w}}^{\deg = 1 + 5 + 2w}.
          \end{align*}

Thus $\deg \nabla(w+1)= 2(w+1) + \deg \nabla(0)$, and we must have that $\lead \coeff \nabla(w+1) = \lead \coeff \nabla(w)$. And this is equal to $\lead \coeff \nabla(0)$ by our induction hypothesis, so indeed $\lead\coeff\nabla(w+1)=\lead\coeff\nabla(0)$. 

And we have now proved all four of our claims. 
          
      \end{proof}

\subsection{An Infinite Family of Almost-Positive Knots with Second Jones Coefficient equal to $-2$}\label{coeff 2 family section}

\input{Figures/Knot_and_Fam_coeff_2}

    Figure \ref{fig:Knot coeff 2} is a $15$-crossing almost-positive knot diagram with DT code $$[4, 8, 22, 2, 20, 26, 24, -28, -14, 10, 6, 30, 12, -18, -16].$$ 
    We use the obstruction provided by Theorem \ref{main result coeff 1 and 2} to prove that this knot cannot be positive. 

    Using Regina \cite{Regina}, we find its:
    \begin{itemize}
        \item  Jones polynomial:
    $V = t^3 - 2t^4 + 5t^5 - 6t^6 + 7 t^7 - 6t^8 + 3t^9 + t^{10} -4t^{11} + 6t^{12} -7t^{13} + 5t^{14} -3t^{15} + t^{16}.$
    \item HOMFLY polynomial: $a^{-6} z^6 + 4 a^{-6}  z^4 + 4 a^{-6}  z^2 + a^{-6}  + 2 a^{-8} z^6 + 9a^{-8} z^4 + 10 a^{-8}  z^2 + 3 a^{-8}  - a^{-10} z^4 - 6 a^{-10}z^2 - 4 a^{-10}+ 2 a^{-12} z^4 + 4 a^{-12} z^2 + 3 a^{-12} - 3 a^{-14} z^2 - 3 a^{-14} + a^{-16}$
    \item Conway polynomial: $\nabla = 1 + 9z^2 + 14z^4 + 3z^6$
    \end{itemize}

    As in the previous example, a quick check tells us that this diagram (identified by KnotFinder as $15_n11445$ \cite{knotfinder}) cannot represent a positive knot: $$\max \deg V = 16 \nleq 15 = 4\min \deg V + \lead \coeff \nabla,$$ so this fails the test of Theorem \ref{main result coeff 1 and 2}.

    We can use this knot to generate an infinite family of knots that can be shown not to be positive by using Theorem \ref{main result coeff 1 and 2} in the same way. We create a new diagram $D_w$ by adding $w$ copies of a three-crossing loop to the upper left corner of our original diagram $D_0$, as shown in Figure \ref{fig:Family coeff 1}, so that $c(D_w) = c(D_0) + 3w.$

      \begin{claim}\label{claims for family coeff 2}
       Let $V(w)$ be the Jones polynomial of $D_w$, and let $\nabla(w)$ be its Conway polynomial. 
      \begin{enumerate}
          \item $\min \deg V(w) = \min \deg V(0) + w$
          \item $\max \deg V(w) = \max \deg V(0) + 4w$
        \item The second Jones coefficient of $V(w) = $ the second Jones coefficient of $V(0)$
          \item $\lead \coeff \nabla(w) = \lead \coeff \nabla(0)$. 
      \end{enumerate}
      \end{claim}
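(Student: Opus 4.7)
The plan is to mirror the proof of Claim~\ref{claims for family coeff 1} from the previous subsection, since the construction of $D_w$ from $D_0$ is of exactly the same type: a fixed almost-positive diagram with a single circled negative crossing, plus $w$ copies of a three-crossing loop attached in the same region, adding $3$ crossings, $1$ Seifert/$A$-circle, and $2$ $B$-circles per loop. For part~(1), I would first check that the circled negative crossing in $D_0$ is of type~I in the sense of Stoimenow (Lemma~\ref{stoimenow's lemma}): no other crossing joins the same pair of Seifert circles. Granting this, Stoimenow's Lemma yields $2\min\deg V(w)=c(D_w)-s(D_w)+1$, and the loop count gives $c(D_w)-s(D_w)=c(D_0)-s(D_0)+2w$, so $\min\deg V(w)=\min\deg V(0)+w$. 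For part~(2), I would verify that each $D_w$ is $B$-adequate (straightforward from inspecting the loop region) and then apply the equality case of the $B$-adequacy bound discussed in the Appendix: $\max\deg V(w)=c(D_w)+\tfrac{B_w-1}{2}-\tfrac{3}{2}$, which increases by $4$ per loop.

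For parts~(3) and~(4), I would reuse the double-layered HOMFLY skein argument from Section~\ref{coeff 1 family section}. Resolving one positive crossing of the outermost added loop gives $\alpha P(w)-\alpha^{-1}P(w-1)=zP(D_{wS})$, where $D_{wS}$ is the smoothing. Then resolving one of the two crossings of the upper triangle of $D_{wS}$ expresses $P(D_{wS})$ in terms of $P(w-1)$ and $P(D_{00})P(3_1)^{w-1}$, where $D_{00}$ denotes $D_0$ with the negative crossing smoothed, by multiplicativity of HOMFLY under connected sum. Combining yields a closed recursion of exactly the form
\[
\alpha^{2}P(w)=(1+z^{2})P(w-1)+\bigl(P(D_{0+})-\alpha^{-2}P(D_{0})\bigr)P(3_{1})^{w-1}.
\]
Specializing to Jones via $\alpha=t^{-1}$, $z=t^{1/2}-t^{-1/2}$, and using that the first Jones coefficient of any (almost-)positive link is $\pm 1$, an induction on $w$ based on the min-degree bookkeeping from part~(1) shows the second Jones coefficient of $V(w)$ equals that of $V(w-1)$, and hence of $V(0)=-2$. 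Specializing to Conway via $\alpha=1$, using $\nabla(3_1)=1+z^{2}$, and computing $\deg\nabla(D_{00})=c(D_{00})-s(D_{00})+1$ from Cromwell's formula (Proposition~\ref{Cromwell pos Conway degree}), an analogous degree-tracking induction shows the leading Conway coefficient is preserved.

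The hard part will be the degree bookkeeping in parts~(3)--(4). To conclude that the second Jones coefficient is unchanged, I need to confirm at each inductive step that the two summands in the Jones-specialized recursion line up so that the first coefficient of $V(D_{0+})$ exactly cancels the shift produced by the factor $(t^{-1}-1+t-t^{2})V(w-1)$; this relies on the type~I property of the negative crossing to force $\min\deg V(D_0)=\min\deg V(D_{0+})$. The Conway side is more robust, since degrees of positive-link Conway polynomials are controlled by Euler characteristic, but I still need to verify that $D_{00}$ is a positive diagram (it is, since the only negative crossing of $D_0$ has been smoothed) and that the two summands of the Conway recursion have strictly different degrees so that the leading coefficient is preserved rather than summed.
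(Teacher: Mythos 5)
Your proposal is correct and matches the paper exactly: the paper's entire proof of this claim is the single sentence ``The proof is the same as for Claim \ref{claims for family coeff 1},'' and what you have written out is precisely that argument (Stoimenow's type~I lemma for part (1), $B$-adequacy for part (2), and the two-layer HOMFLY skein recursion specialized to the Jones and Conway polynomials for parts (3) and (4)), together with the right list of hypotheses to re-verify for this particular $D_0$. No gaps.
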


    If our claims are true, then the second Jones coefficient of $D_w$ is $-2$, and yet
      \begin{align*}
      \max \deg V(w) &= \max \deg V(0) + 4w\\
      & = 16 + 4w\\
      & \nleq 15 + 4w\\
      &= 4 \min \deg V(0) + \lead \coeff \nabla(0) + 4w \\
      & = 4 \min \deg V(w)+ \lead \coeff \nabla(w).
      \end{align*}

      so by Theorem \ref{main result coeff 1 and 2}, $D_w$ cannot be a diagram of a positive knot. 

      \begin{proof}
         The proof is the same as for Claim \ref{claims for family coeff 1}. 
      \end{proof}

\section{Appendix}\label{Appendix}

This discussion of the Kauffman state-sum model is of course based on the work in Kauffman's paper \cite{Kauffman}. It is also heavily influenced by (and borrows notation from) similar discussions written by Livingston (\cite{livingston_1993}); Futer, Kalfagianni, and Purcell (\cite{futer2012guts}); and Stoimenow (\cite{Stoimenow}).

In the Kauffman state-sum model of the Jones polynomial, we begin with a link diagram and at every crossing, perform either an $A$-smoothing or a $B$-smoothing \cite{Kauffman}. 
If the diagram had $c$ crossings, then this results in $2^c$ possible arrangements of circles. Each such arrangement is called a \textit{state}. Of particular interest to us are the $A$-state (where every crossing is given an $A$-smoothing) and the $B$-state (where every crossing is given a $B$-smoothing). On each state, we compute the  \textit{bracket}. Let $\sigma$ denote a state. Then the bracket is defined by: 
$$\langle D | \sigma \rangle : = A^{\#A-\#B},$$ where $\#A$ is the number of $A$-smoothings in the state and $\#B$ is the number of $B$-smoothings in the state \cite{Lickorish1988}. The Kauffman bracket of a diagram $D$ is the sum over all states $\sigma$ of: 
$$\langle D \rangle := \sum_{\sigma} \langle D | \sigma \rangle (-A^2-A^{-2})^{|\sigma|-1},$$ where $|\sigma|$ is the number of circles in the state $\sigma$.  Then, the Kauffman bracket polynomial is defined 
\begin{align*}
    F(D) &:= (-A)^{-3w(D)} \langle D \rangle \\
    &= (-A)^{-3w(D)} \sum_\sigma \langle D | \sigma \rangle\\
    &= (-A)^{-3w(D)} \sum_{\sigma} A^{\#A-\#B}(-A^2-A^{-2})^{|\sigma|-1}
\end{align*}

The Kauffman bracket $\langle D \rangle $ is not an link invariant; the value changes depending the choice of diagram. (It is invariant under Reidemeister moves $II$ and $III$, but fails to be invariant under move $I$ \cite{Kauffman}.)  The Kauffman bracket\textit{ polynomial}, however, is a link invariant. And in fact, with just a simple substitution, we obtain the Jones polynomial: 

$$F(t^{-1/4}) = V.$$

This state-sum model allows us to easily observe how changing a single crossing has a ripple effect through the Jones polynomial.

We can observe that the all-$A$-state will contribute a term of highest possible degree to the Kauffman bracket polynomial $F$, which means that, via the substitution $t^{-1/4}$ for $A$, this corresponds to the lowest possible degree of the Jones polynomial. That is, 

\begin{equation*}
    \max \deg {F} \leq -3w(D) + c(D) + 2 (A_D - 1), 
\end{equation*}
and so with the replacement of $t^{-1/4}$ for $A$, we get a natural upper bound on the minimum degree of the Jones polynomial:

\begin{equation*}
    \min \deg V \geq \frac{-3w(D) + c(D) + 2(A_D - 1)}{-4} = \frac{3w(D) - c(D) - 2(A_D - 1)}{4}.
\end{equation*}

It can be helpful to rewrite this in terms of the number of negative crossings in the diagram. Let $q$ denote the number of negative crossings in $D$. Then $w(D) = c(D) - 2q$, and we can write: 
\begin{align*}
      \min \deg V &\geq \frac{3(c(D) - 2 q) - c(D) - 2(A_D - 1)}{4} \\
      &= \frac{c(D) - A_D + 1}{2} - \frac{3q}{2}.
\end{align*}

By the same logic, the all $B$-state contributes a term of lowest possible degree in the Kauffman bracket polynomial, and thus of highest possible degree in the Jones polynomial. Hence
\begin{equation}
    \max \deg V_D \leq \frac{-3w(D) - c(D) - 2(B_D - 1)}{-4} = \frac{3w(D) + c(D) + 2(B_D - 1)}{4} 
\end{equation}

Incorporating information about negative crossings:
\begin{equation}
    \max \deg V_D \leq \frac{3(c(D) - 2q) + c(D) + 2(B_D - 1)}{4}  = c(D) + \frac{B_D - 1}{2} - \frac{3q}{2}.
\end{equation}

If $D$ is $A$-adequate, then 
\begin{equation*}
    \min \deg V =  \frac{c(D) - A_D + 1 }{2}-\frac{3q}{2}.
\end{equation*}

 If $D$ is $B$-adequate, then 
\begin{equation}\label{B-adequate}
    \max \deg V = c(D) + \frac{B_D -1}{2}- \frac{3q}{2}.
\end{equation}

Further, we see that if $D$ is positive, then 
$$\min \deg V = \frac{c(D) - A_D + 1}{2} \hspace{20pt} \text{ and } \hspace{20pt} \max \deg V \leq c(D) + \frac{B_D -1}{2}.$$

\newpage 

\bibliographystyle{amsplain}

\bibliography{bib}

\end{document}